\begin{document}
\newcommand{\note}[1]{\marginpar{\tiny #1}}
\newtheorem{theorem}{Theorem}[section]
\newtheorem{result}[theorem]{Result}
\newtheorem{fact}[theorem]{Fact}
\newtheorem{conjecture}[theorem]{Conjecture}
\newtheorem{lemma}[theorem]{Lemma}
\newtheorem{proposition}[theorem]{Proposition}
\newtheorem{corollary}[theorem]{Corollary}
\newtheorem{facts}[theorem]{Facts}
\newtheorem{question}[theorem]{Question}
\newtheorem{props}[theorem]{Properties}
\theoremstyle{definition}
\newtheorem{example}[theorem]{Example}
\newtheorem{definition}[theorem]{Definition}
\newtheorem{remark}[theorem]{Remark}

\newcommand{\notes} {\noindent \textbf{Notes.  }}
\renewcommand{\note} {\noindent \textbf{Note.  }}
\newcommand{\defn} {\noindent \textbf{Definition.  }}
\newcommand{\defns} {\noindent \textbf{Definitions.  }}
\newcommand{\x}{{\bf x}}
\newcommand{\z}{{\bf z}}
\newcommand{\B}{{\bf b}}
\newcommand{\V}{{\bf v}}
\newcommand{\T}{\mathcal{T}}
\newcommand{\Z}{\mathbb{Z}}
\newcommand{\Hp}{\mathbb{H}}
\newcommand{\D}{\mathbb{D}}
\newcommand{\R}{\mathbb{R}}
\newcommand{\N}{\mathbb{N}}
\renewcommand{\B}{\mathbb{B}}
\newcommand{\C}{\mathbb{C}}
\newcommand{\dt}{{\mathrm{det }\;}}
 \newcommand{\adj}{{\mathrm{adj}\;}}
 \newcommand{\0}{{\bf O}}
 \newcommand{\w}{\omega}
 \newcommand{\av}{\arrowvert}
 \newcommand{\zbar}{\overline{z}}
 \newcommand{\htt}{\widetilde{h}}
\newcommand{\ty}{\mathcal{T}}
\renewcommand\Re{\operatorname{Re}}
\renewcommand\Im{\operatorname{Im}}
\newcommand{\diam}{\operatorname{diam}}
\newcommand{\dist}{\text{dist}}
\newcommand{\ds}{\displaystyle}
\numberwithin{equation}{section}
\newcommand{\cN}{\mathcal{N}}
\renewcommand{\theenumi}{(\roman{enumi})}
\renewcommand{\labelenumi}{\theenumi}
\newcommand{\inte}{\operatorname{int}}

\newcommand{\dn}[1]{{\scriptsize \color{blue}\textbf{Dan's note:} #1 \color{black}\normalsize}}
\newcommand{\af}[1]{{\scriptsize \color{red}\textbf{Alastair's note:} #1 \color{black}\normalsize}}

\date{\today}
\title{Normal families and quasiregular mappings}
\author{Alastair N. Fletcher}
\address{Department of Mathematical Sciences, Northern Illinois University, DeKalb, IL 60115-2888, USA\\
\textsc{\newline \indent {\includegraphics[width=1em,height=1em]{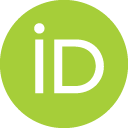} {\normalfont https://orcid.org/0000-0003-1942-6928}}}}
\email{fletcher@math.niu.edu}
\thanks{The first author was supported by a grant from the Simons Foundation, \#352034}
\author{Daniel A. Nicks}
\address{School of Mathematical Sciences, The University of Nottingham, University Park, Nottingham, NG7 2RD, UK\\
\textsc{\newline \indent {\includegraphics[width=1em,height=1em]{orcid2.png} {\normalfont https://orcid.org/0000-0002-9493-2970}}}}
\email{dan.nicks@nottingham.ac.uk}

\maketitle

\begin{abstract}
Beardon and Minda gave a characterization of normal families of holomorphic and meromorphic functions in terms of a locally uniform Lipschitz condition. Here, we generalize this viewpoint to families of mappings in higher dimensions that are locally uniformly continuous with respect to a given modulus of continuity. Our main application is to the normality of families of quasiregular mappings through a locally uniform H\"older condition. This provides a unified framework in which to consider families of quasiregular mappings, both recovering known results of Miniowitz, Vuorinen and others, and yielding new results. In particular, normal quasimeromorphic mappings, Yosida quasiregular mappings and Bloch quasiregular mappings can be viewed as classes of quasiregular mappings which arise through consideration of various metric spaces for the domain and range. We give several characterizations of these classes and obtain upper bounds on the rate of growth in each class.
\end{abstract}

\section{Introduction}

\subsection{Background}

Normal family theory plays a pivotal role in classical complex analysis and in the more modern developments of complex dynamics.
However, checking whether or not a given family is normal directly from the definition is usually a challenging task. For this reason, results giving conditions which imply normality are highly desirable. As an example, a family of meromorphic functions defined on a domain $U\subset \C$ that all omit the same three points in $\overline{\C}$ is normal by Montel's Theorem. This result has huge implications for complex dynamics, where the stable or chaotic behaviour of the iterates of a function is characterized through a condition involving normal families.

Normal family theory itself is well developed, see for example the monograph of Schiff \cite{Schiff}. Beardon and Minda \cite{BM} brought a new viewpoint to the theory by considering families of locally uniformly Lipschitz functions. They gave a characterization of normality in terms of the local Lipschitz behaviour and were then able to recover many familiar results from normal family theory and obtain some generalizations. A key to the generality of their method is that the domain and range under consideration are considered as metric spaces, and then the local uniform Lipschitz condition is given in terms of these distance functions. Changing the metrics in domain and range yields different results. It is also worth observing that their results contain normal family theory with respect to holomorphic and meromorphic functions as a special case. For example, their framework includes families of mappings which do not necessarily preserve orientation.

In the final paragraph of their paper, Beardon and Minda suggest that there should be a generalization of their approach to the setting of quasiregular mappings in two or more dimensions. The purpose of this paper is to address this question.

Quasiregular mappings provide the natural setting for generalizing geometric function theory in the plane to $\R^n$, for $n\geq 3$. Liouville's Theorem asserts that the only conformal mappings in $\R^n$, for $n\geq 3$, are M\"obius mappings and hence it is necessary to consider mappings with some distortion to get a rich theory. Fortunately, there are many analogues of important results in complex analysis for quasiregular mappings. Of particular relevance to the current paper is a generalization of Montel's Theorem to families of $K$-quasiregular mappings. We refer to Rickman's monograph \cite{Rickman} for a complete introduction to quasiregular mappings.

The Beardon-Minda viewpoint applies to families of locally uniformly Lipschitz mappings. However, $K$-quasiregular mappings on $\R^n$ need not be locally Lipschitz and are only guaranteed to be locally H\"older continuous with exponent $\alpha = K^{1/(1-n)}$, as shown by examples of the form $f_{t}(x) = x|x|^{t-1}$, for $t>0$. It is therefore natural to consider families of locally uniformly H\"older continuous mappings with a fixed H\"older exponent. 

A key role is played by the domains and ranges for our families of mappings. We will always assume that our domains and ranges are subsets of the $n$-sphere $S^n$, for $n\geq 2$, equipped with a conformal metric. Important examples to bear in mind are the unit ball $\B^n$ equipped with the hyperbolic metric, $\R^n$ equipped with the usual Euclidean metric and $S^n$ equipped with the spherical metric. Some of our work also applies to more general conformal metrics, and it is worth pointing out that there is a great deal of interplay between quasiregular mappings and metric spaces, see for example the recent book of Hariri, Kl\'en and Vuorinen \cite{HKV}.

There are several families of holomorphic and meromorphic functions in the plane which have been well-studied and can be placed into the Beardon-Minda framework. Lehto and Virtanen \cite{LV} introduced normal meromorphic functions. These are meromorphic functions $f:\D \to \overline{\C}$ for which $\{f \circ A : A \in G \}$ is a normal family, where $G$ is the automorphism group of $\D$. Bloch functions were first systematically studied by Pommerenke \cite{Po}. These are holomorphic functions $f:\D \to \C$ for which $\{ f(A(z)) - f(A(0)) : A\in G \}$ is normal, where again $G$ is the automorphism group of $\D$. Finally, Yosida \cite{Yosida} introduced the class that would become eponymous via the meromorphic functions $f: \C \to \overline{\C}$ for which $\{ f(z+w) : w\in \C \}$ is normal. Each of these three classes can be classified via a uniform continuity condition in terms of the appropriate choices of the hyperbolic metric on $\D$, the Euclidean metric on $\C$ or the spherical metric on $\overline{\C}$ in domain and range. 
There has been much research on these classes since their introduction. The interested reader may consult the books of Pommerenke \cite{Pommerenke, Pom2} and Schiff \cite{Schiff} for further developments in two dimensions.

It is natural to ask if there are corresponding classes of quasiregular mappings. In fact, the quasiregular analogue of normal meromorphic functions from $\B^n$ to $S^n$ have been well studied, see for example Vuorinen \cite{Vuorinen}, as have the analogue of Yosida functions (see \cite{BH,MV}). Via our generalization of the Beardon-Minda framework, we provide a unified viewpoint for these two classes of mappings, as well as the quasiregular analogue of Bloch functions. This latter class of mappings is studied here systematically for the first time, although see also \cite{Eremenko,Rajala}.

\subsection{Outline of the paper}

In section 2, we recall the notion of $\omega$-continuity and consider general families $\mathcal{F}$ of mappings which satisfy a local uniform $\omega$-continuity condition between subsets $X,Y$ of $S^n$ with conformal distance functions $d_X,d_Y$ respectively. We recall how $C(X,Y)$ can be made into a metric space and then interpret the Arzela-Ascoli Theorem for families of locally uniformly $\w$-continuous mappings in Theorem \ref{thm:1}. This states that, with appropriate conditions on $X$ and $Y$, a family $\mathcal{F}$ of locally uniformly $\w$-continuous mappings is relatively compact in $C(X,Y)$ if and only if there is some point $x_0 \in X$ with a relatively compact orbit $\mathcal{F}(x_0)$ in $Y$.

We further study the situation where $Y$ is a subset of another metric space $(Z,d_Z)$. Indeed, this will always be the case, since $Y\subset S^n$. The Escher condition, introduced by Beardon and Minda, holds if balls of a fixed radius in $Y$ near $\partial Y$ are contained in small balls with respect to $d_Z$. Theorem \ref{thm:escnormal} then asserts that in this situation, with appropriate conditions on $X$, $Y$ and $Z$, a family $\mathcal{F} \subset C(X,Y)$ of locally uniformly $\w$-continuous mappings is relatively compact in $C(X,Z)$ if and only if  $\mathcal{F}$ is a normal family relative to $Z$. Here, a family $\mathcal{F} \subset C(X,Y)$ is said to be normal relative to $Z$ if 
$\mathcal{F}$ is relatively compact in $C(X,Z)$ and the closure of $\mathcal{F}$ in $C(X,Z)$ is the closure of $\mathcal{F}$ in $C(X,Y)$ together possibly with constant maps into $\partial Y$.  At this point we should bear in mind the usual definition of a normal family of holomorphic functions in the plane: any sequence must have a subsequence which either converges locally uniformly, or diverges locally uniformly to infinity. This last part is the interpretation of {\it constant maps into $\partial Y$} when $Y$ is $\C$ with the Euclidean metric and $\partial Y$ is the point at infinity in $\overline{\C}$.

In section 3, we specialize to consider families of quasiregular mappings. We recall a selection of facts from this well-developed theory that will be salient for our purposes, most importantly Theorem \ref{thm:ric} that states quasiregular mappings are locally H\"older continuous. We will therefore be able to apply the results of Section 2 in the class of quasiregular mappings. 
There are various ways of considering normal families in this setting.
If $X,Y$ are subdomains of $S^n$, then we can either define a normal family of quasiregular mappings relative to $S^n$ as above, or use the definition introduced by Miniowitz \cite{Min} involving local uniform convergence, or use relative compactness in $C(X,S^n)$. Definition \ref{def:qrnormal}, and the discussion after it, will show that all these viewpoints coincide in our setting.

Our main result in section 3, Theorem \ref{thm:qr}, asserts that a family of $K$-quasiregular mappings from $X$ to $Y$ is relatively compact in $C(X,Y)$ if and only if $\mathcal{F}$ satisfies a local uniform H\"older condition with exponent $K^{1/(1-n)}$ and there is some point with a relatively compact orbit in $Y$. In the special case where $Y$ is the compact set $S^n$, this second condition can be dropped. In the rest of section 3, we recover old results and obtain new ones by considering various domains and ranges in Theorem \ref{thm:qr}, as well as providing an application to Julia sets of uniformly quasiregular mappings in Theorem \ref{thm:julia}.

In section 4, we apply our analysis of families of $K$-quasiregular mappings to study normal quasiregular mappings. Here, we assume $X\subset S^n$ carries a transitive collection $G$ of isometries. A quasiregular mapping $f:X\to S^n$ is then called normal if the family $\{ f\circ A : A\in G \}$ is a normal family. 
If the range is $\R^n$, then we instead require the family $\{ f(A(x)) - f(A(x_0)) :A\in G \}$ to be normal, for some $x_0 \in X$; compare with the definition of Bloch functions above.

Interestingly, the choice of $G$ turns out not to matter in this definition. While $G$ could be chosen to be the isometry group of $X$ (if such a group if transitive), it need not be. This is seen via Theorem \ref{thm:balls}, which characterizes normal quasiregular mappings as those which are uniformly continuous. This latter condition is independent of the choice of $G$. To illustrate the point, Yosida functions in the plane are defined by letting $G$ be the translation group of~$\C$. Nothing is lost or gained by instead letting $G$ be the full isometry group of $\C$.
We further show in Theorem \ref{thm:globalholder} that normal quasiregular mappings into $S^n$ are precisely the globally H\"older quasiregular mappings. The corresponding result for normal quasiregular mappings into $\R^n$, Theorem \ref{thm:globalholder2}, shows that such maps are H\"older on small scales and Lipschitz on large scales. 

Moving into section 5, we use our framework from section 4 to study three classes of normal quasiregular mappings. First, we introduce the quasiregular analogue of Bloch functions where the domain is $\B^n$ with the hyperbolic metric and the range is $\R^n$ with the Euclidean metric. Our first main result here, Theorem \ref{thm:blocheq}, shows that $f$ is a Bloch mapping if and only if the Bloch radius is finite. The failure of quasiregular mappings to be closed under addition means there is no Banach space structure so there is no precise analogue of the Bloch norm. Instead we give a discrete version $R_f$ via the supremum of Euclidean diameters of images of hyperbolic balls of radius $1$ and prove in Theorem \ref{lem:blochnorm} that Bloch mappings can be characterized via the finiteness of $R_f$. We also give in Theorem \ref{thmLblochgrowth} an upper bound for the rate of growth of Bloch mappings in terms of $R_f$, and construct an example that is an analogue of $\log(1-z)$.

We next turn to normal quasiregular mappings $\B^n \to S^n$. This class has already been studied, see \cite[Chapter 13]{Vuorinen}. 
Our main new contribution here is in Theorem \ref{thm:normalgrowth} where it is shown that normal quasiregular mappings have order of growth at most $n-1$. Finally, we study Yosida quasiregular mappings from $\R^n$ to $S^n$. Our main contribution here is Theorem~\ref{thm:yosida}, a generalization of a result of Minda \cite{Minda}, that asserts $f$ is not a Yosida mapping if and only if there is a rescaling in the Pang-Zalcman sense which converges to a non-constant Yosida mapping.

\section{The Beardon and Minda viewpoint in higher dimensions}

{\bf Notation:} For $n\geq 2$, we denote by $\R^n$ Euclidean $n$-space, by $S^n$ the unit $n$-sphere in $\R^{n+1}$, which we will identify with $\R^n \cup \{\infty \}$, and by $\B^n$ the open unit ball in $\R^n$. Given a metric space $(X,d_X)$, the open ball centred at $x_0 \in X$ of radius $r>0$ is $B_X(x_0,r)$. If the underlying metric space is Euclidean space, we drop the $X$ subscript.

\subsection{Conformal metrics}

Suppose $X$ is a domain in $S^n $. A conformal metric on $X$ is a continuous form $\tau(x) |dx|$ for which $\tau$ is strictly positive. A conformal metric induces a distance function given by
\[ d(u,v) = \inf \int_{\gamma} \tau (x) |dx|,\]
where the infimum is over all paths in $X$ joining $u$ and $v$. We will often write $\tau_X$ and $d_X$ for the metric and distance function on $X$, as long as the context is clear. 

\begin{example}
\label{ex:1}
Suppose $n\geq 2$.
\begin{itemize}
\item The Euclidean metric on $\R^n$ arises from $\tau(x) = 1$. 
\item Modelling $S^n$ as the unit sphere embedded in $\R^{n+1}$, the spherical distance $\sigma $ on $S^n$ arises from the restriction of the Euclidean metric on $\R^{n+1}$. 
It will be useful to note that by identifying $S^n$ with $\R^n \cup \{ \infty \}$, we have
\begin{equation}
\label{eq:sigrn} 
\sigma (u,v) = \inf_{\gamma} \int_{\gamma} \frac{ 2 \: |dx| }{1+|x|^2} ,\quad u,v \in  S^n .
\end{equation}
\item The hyperbolic distance $\rho$ on $\B^n$ arises from $\tau(x) = 2(1-|x|^2)^{-1}$.
\item If $X$ is a proper subdomain of $\R^n$, then the quasihyperbolic distance arises from $\tau(x) = d(x,\partial X)^{-1}$, where $d$ here denotes the Euclidean distance.
\end{itemize}
\end{example}

We remark that not every distance function arises in this way.

An important point to consider is whether the metrics are complete, that is, whether Cauchy sequences in the corresponding distance function converge. For example, the hyperbolic metric is complete on $\B^n$, whereas the Euclidean metric is not. However, every proper subdomain of $S^n$ carries a complete conformal metric, namely the quasihyperbolic metric. It is sometimes convenient to assume that metrics are complete, and then appeal to the following lemma.

\begin{lemma}
\label{lemma:bilip}
If $\tau_j$ are conformal metrics on $X_j$ for $j=1,2$ with $X_1 \subset X_2$, then the associated distance functions $d_1,d_2$ are bi-Lipschitz equivalent on compact subsets of $X_1$.
\end{lemma}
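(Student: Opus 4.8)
The plan is to fix an arbitrary compact set $E\subset X_1$ and produce a single constant $L\geq 1$ with $L^{-1}d_2\leq d_1\leq L\,d_2$ throughout $E$. I would use the spherical metric $\sigma$ of Example \ref{ex:1} as a global reference, the point being that its density is continuous and strictly positive on all of $S^n$, including at $\infty$. Throughout, $\ell_\sigma(\gamma)$ and $\ell_{\tau_j}(\gamma)$ denote the spherical length and the $\tau_j$-length of a path $\gamma$.

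First I would pass to a slightly larger compact neighbourhood. Since $E$ is compact and $X_1$ is open, there is a compact set $E'$ with $E\subset\inte(E')\subset E'\subset X_1$; set $\epsilon_0=\sigma(E,S^n\setminus\inte(E'))>0$. On $E'$ each of $\tau_1,\tau_2$ and the spherical density is continuous and strictly positive, hence bounded above and below by positive constants. Comparing each $\tau_j$ to the spherical density on $E'$ therefore yields constants $0<c\leq C$ (chosen to work for both $j$) such that, for every path $\gamma$ contained in $E'$,
\[ c\,\ell_\sigma(\gamma)\ \leq\ \ell_{\tau_j}(\gamma)\ \leq\ C\,\ell_\sigma(\gamma),\qquad j=1,2. \]
This local comparison drives everything; the difficulty is that the infimising paths defining $d_1$ and $d_2$ need not remain in $E'$.

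For the upper bounds, given $u,v\in E$ with $\sigma(u,v)<\epsilon_0$, the spherical geodesic joining them stays within $\sigma$-distance $\epsilon_0$ of $E$, hence inside $\inte(E')$; feeding it into the display gives $d_j(u,v)\leq C\sigma(u,v)$ for $j=1,2$. For the lower bounds I would take an arbitrary path $\gamma$ in $X_j$ from $u\in E$ to $v$ and split into cases: if $\gamma\subset E'$ then $\ell_{\tau_j}(\gamma)\geq c\,\sigma(u,v)$; if instead $\gamma$ leaves $E'$, then the initial arc of $\gamma$ up to its first point of $S^n\setminus\inte(E')$ lies in $E'$ and has $\sigma$-length at least $\epsilon_0$, whence $\ell_{\tau_j}(\gamma)\geq c\,\epsilon_0$. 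Taking the infimum over $\gamma$ gives the uniform bound $d_j(u,v)\geq c\min(\sigma(u,v),\epsilon_0)$. I expect this escaping-path case to be the main obstacle, precisely because $\tau_j$ is not controlled outside $E'$ (it may be small there); the resolution is the observation that any such path must already accumulate $\sigma$-length $\epsilon_0$ while still inside $E'$, where the density \emph{is} controlled.

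Combining these, for near pairs ($\sigma(u,v)<\epsilon_0$) both $d_1$ and $d_2$ are squeezed between $c\,\sigma(u,v)$ and $C\,\sigma(u,v)$, so their ratio lies in $[c/C,\,C/c]$. For far pairs ($\sigma(u,v)\geq\epsilon_0$) the lower bound gives $d_j(u,v)\geq c\,\epsilon_0>0$, while $\diam_{d_j}(E)<\infty$ because the near-pair comparison shows $d_j$ is continuous in the spherical topology and $E$ is compact; hence the ratio $d_1/d_2$ is again bounded above and below by positive constants. Taking $L$ to be the largest of the resulting finitely many constants completes the proof. The degenerate case $X_1=S^n$ is even simpler: one may take $E'=S^n$, so every path stays in $E'$ and the displayed inequality applies directly.
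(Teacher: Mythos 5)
Your proof is correct and is essentially the argument the paper relies on: the paper proves this lemma simply by citing \cite[Lemma 2.1]{BM}, whose proof follows exactly your scheme of bounding both densities against a reference metric on a compact neighbourhood $E'$ and observing that any path escaping $E'$ must first accumulate a definite amount of reference length while still inside it. Your only modification --- using the spherical metric rather than the Euclidean metric as the reference --- is the natural adaptation from plane domains to subdomains of $S^n$, and it correctly covers the degenerate case $X_1 = S^n$.
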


The proof of this lemma is the same as that of \cite[Lemma 2.1]{BM}.
We recall that $d_1$ and $d_2$ are {\it bi-Lipschitz equivalent} distance functions on $X$ if there exists $L\geq 1$ such that
\[ \frac{d_2(x,y)}{L} \leq d_1 (x,y) \leq Ld_2(x,y)\]
for all $x,y\in X$. The smallest such $L$ for which the above inequalities hold is called the {\it isometric distortion}.

\subsection{$\omega$-continuity}

The notion of $\omega$-continuity is a quantitative way of describing continuity and is closely related to uniform continuity. In this section, $X$ and $Y$ are subdomains of $S^n$ with distance functions $d_X$ and $d_Y$, respectively, arising from conformal metrics.

We recall that $C(X,Y)$ is the space of continuous maps $f:X\to Y$, where continuity is with respect to $d_X$ and $d_Y$. The space $C(X,Y)$ carries a distance function defined as follows (c.f. \cite[\S 3]{BM}): let $(K_k)_{k=1}^{\infty}$ be a compact exhaustion of $X$ and for $f,g \in C(X,Y)$, set
\[ d_k(f,g) = \sup \{ d_Y( f(x) , g(x) ) : x\in K_k \} \]
and
\[ d_{X,Y}(f,g) = \sum_{k=1}^{\infty} \frac{1}{2^k} \left ( \frac{d_k(f,g) }{1+d_k(f,g)} \right ).\]
Then $d_{X,Y}(f_m,f) \to 0$ if and only if $f_m\to f$ uniformly on compact subsets of $X$. The topology arising from this distance function is called the topology of uniform convergence on compact sets, or the topology of local uniform convergence, and does not depend on the particular choice of compact exhaustion. Consequently, when we say $f_m\to f$ in $C(X,Y)$, this means local uniform convergence relative to $d_X$ and $d_Y$ or, equivalently, convergence relative to the distance $d_{X,Y}$ on $C(X,Y)$. Note also that the topology induced on $C(X,Y)$ is not changed when the distance functions $d_X$ and $d_Y$ are replaced with topologically equivalent distances.

We will often want to talk about a family $\mathcal{F}$ of functions in $C(X,Y)$ being {\it relatively compact}. This means that the closure of $\mathcal{F}$ in $C(X,Y)$ is compact.
Since sequential compactness coincides with compactness in metric spaces, we are free to use either notion when discussing compact sets in $C(X,Y)$. In particular, $\mathcal{F}$ is relatively compact if and only if any sequence of functions $f_m \in \mathcal{F}$ is guaranteed to a have a subsequence which converges uniformly on compact subsets to an element of $C(X,Y)$.

We now recall the notion of $\omega$-continuity.

\begin{definition} \label{def:w-cts}
Let $\w:[0,\infty) \to [0,\infty)$ be a continuous increasing function with $w(0) = 0$. We call a map $f:X\to Y$ $\w$-continuous if $f$ has modulus of continuity $\w$, that is, if
\[ d_Y (f(x),f(y)) \leq \w ( d_X(x,y))\]
for all $x,y\in X$.
\end{definition}

\begin{example}
If $\w(t) = Ct$ for $C>0$ then $f$ is Lipschitz continuous. If $\w(t) = Ct^{\alpha}$ for constants $C,\alpha >0$ then $f$ is $\alpha$-H\"older continuous.
\end{example}

\begin{definition}
A family $\mathcal{F} \subset C(X,Y)$ is called:
\begin{enumerate}[(i)]
\item uniformly $\w$-continuous if there exists $L>0$ such that for all $x,y \in X$ and all $f\in \mathcal{F}$, 
\[ d_Y(f(x),f(y)) \leq L \w(d_X(x,y));\]
\item uniformly $\w$-continuous on compact sets if for each compact set $E\subset X$, there exists $L>0$ such that for all $x,y\in E$ and all $f\in \mathcal{F}$,
\[ d_Y(f(x),f(y)) \leq L \w(d_X(x,y));\]
\item locally uniformly $\w$-continuous if for each $x_0\in X$, there exists $r,L>0$ such that for all $x,y \in B_X(x_0,r)$ and all $f\in \mathcal{F}$,
\[ d_Y (f(x),f(y)) \leq L \w (d_X(x,y)).\]
\end{enumerate}
\end{definition}

We will show that uniform $\w$-continuity on compact subsets and local uniform $\w$-continuity are the same notions with respect to conformal metrics.

\begin{proposition}
\label{prop:1}
Let $X,Y$ be subdomains of $S^n$ equipped with conformal metrics and corresponding distance functions $d_X,d_Y$, respectively. 
A family $\mathcal{F} \subset C(X,Y)$ is locally uniformly $\w$-continuous if and only if it is uniformly $\w$-continuous on compact sets.
\end{proposition}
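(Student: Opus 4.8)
The plan is to establish the substantive implication, that local uniform $\omega$-continuity forces uniform $\omega$-continuity on compact sets; the converse is immediate. For the converse, given $x_0\in X$ choose $r>0$ so small that $\overline{B_X(x_0,r)}$ is a compact subset of $X$ (small closed balls have compact closure because near $x_0$ the conformal distance $d_X$ is bi-Lipschitz to the Euclidean distance, by Lemma \ref{lemma:bilip}), and then apply the uniform condition on the compact set $E=\overline{B_X(x_0,r)}$ to obtain the constant $L$ witnessing the local condition on $B_X(x_0,r)$.

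For the forward direction, fix a compact $E\subset X$ and assume local uniform $\omega$-continuity. First I would enlarge $E$ to a \emph{connected} compact set $E_0$ with $E\subset E_0\subset X$; this is possible since $X$, being a domain, is path-connected, so one adjoins to a finite system of closed balls covering $E$ finitely many connecting arcs in $X$. Using the local condition at each point together with compactness, I would extract a finite subcover and record a radius $r_0>0$ and a constant $L_0$ enjoying the following \emph{near property}: whenever $u,v\in E_0$ satisfy $d_X(u,v)<r_0$, they lie in a common ball on which the local estimate holds, so $d_Y(f(u),f(v))\le L_0\,\omega(d_X(u,v))$ for every $f\in\mathcal{F}$.

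The crucial step is to bound the image diameter $M:=\sup\{d_Y(f(x),f(y)):x,y\in E_0,\ f\in\mathcal{F}\}$. I would cover $E_0$ by finitely many, say $m$, balls of radius $r_0/2$ centred at points of $E_0$, and form the graph on these centres with an edge whenever two such balls meet; connectedness of $E_0$ forces this graph to be connected (otherwise the balls split into two disjoint open sets disconnecting $E_0$). Hence any two points of $E_0$ can be joined by a chain $x=z_0,z_1,\dots,z_N=y$ of points of $E_0$ with at most $N=m+1$ steps, each of length $d_X(z_{j-1},z_j)<r_0$. Applying the near property along the chain, together with $\omega(d_X(z_{j-1},z_j))\le\omega(r_0)$ (as $\omega$ is increasing) and the triangle inequality in $Y$, yields $d_Y(f(x),f(y))\le N L_0\,\omega(r_0)$, so $M\le N L_0\,\omega(r_0)<\infty$.

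Finally I would combine the two regimes for $x,y\in E\subset E_0$: if $d_X(x,y)<r_0$ the near property gives the bound directly with constant $L_0$, while if $d_X(x,y)\ge r_0$ I would write $d_Y(f(x),f(y))\le M\le \tfrac{M}{\omega(r_0)}\,\omega(d_X(x,y))$, using $\omega(d_X(x,y))\ge\omega(r_0)>0$; then $L=\max\{L_0,\,M/\omega(r_0)\}$ works on $E$. The main obstacle, and the reason this is not a routine translation of the Lipschitz argument of Beardon and Minda, is that $\omega$ need not be subadditive, so one cannot chain the local estimates along a near-geodesic to recover a bound of the form $\omega(d_X(x,y))$ outright. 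The device that circumvents this is to use the chain only to bound the absolute image diameter $M$, and then to convert $M$ into the required multiple of $\omega(d_X(x,y))$ on the ``far'' set via the positivity $\omega(r_0)>0$.
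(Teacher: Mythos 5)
Your proof is correct, but it follows a genuinely different route from the paper's. The paper argues by contradiction: assuming the uniform bound fails on some compact $E$, it extracts sequences $x_m,y_m\in E$ and $f_m\in\mathcal{F}$ with $d_Y(f_m(x_m),f_m(y_m))>m\,\w(d_X(x_m,y_m))$, passes to limits $x\neq y$ (the case $x=y$ being excluded by the local estimate at $x$), deduces that $d_Y(f_m(x),f_m(y))\to\infty$, and then contradicts this by chaining the local estimates along finitely many balls covering a single path from $x$ to $y$, which bounds $d_Y(f(x),f(y))$ by the absolute constant $\sum_i L_{u_i}\w(1)$ uniformly over $\mathcal{F}$. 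Your argument is direct and quantitative: you enlarge $E$ to a connected compact $E_0$, set up a Lebesgue-number ``near property'', chain over a connected ball-cover graph to bound the absolute image diameter $M$ on all of $E_0$ at once, and then split pairs into near ($d_X(x,y)<r_0$) and far ($d_X(x,y)\ge r_0$), converting $M$ into a bound of the required form on far pairs via $\w(d_X(x,y))\ge\w(r_0)>0$. Both proofs turn on the same mechanism --- chaining local estimates over a finite cover to get an \emph{absolute} bound, precisely because $\w$ need not be subadditive, as you correctly identify --- but yours yields an explicit constant $L=\max\{L_0,\,M/\w(r_0)\}$ and avoids any sequence extraction, at the cost of the auxiliary construction of $E_0$; the paper's contradiction route is shorter because it only needs to chain along one path joining the two limit points, with the near-pair regime absorbed into the observation that $x\neq y$. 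One small point: your final division requires $\w(r_0)>0$, i.e.\ that $\w$ is strictly positive at positive arguments; the paper's proof relies on the same fact implicitly when deducing $d_Y(f_m(x),f_m(y))\to\infty$ from its displayed inequality, so this is a shared (and harmless) reading of ``increasing'' in Definition \ref{def:w-cts}.
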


\begin{proof}
Each point in $X$ has a compact neighbourhood, so uniformly $\w$-continuous on compact subsets implies locally uniformly $\w$-continuous.

For the converse, suppose that $\mathcal{F}$ is locally uniformly $\w$-continuous but not uniformly $\w$-continuous on compact sets.  Then there exist a compact set $E\subset X$, points $x_m, y_m \in E$ and $f_m\in\mathcal{F}$ such that, for all $m\in\N$,
\begin{equation}
\label{eq:d(fn(xn),fn(yn))} 
 d_Y (f_m(x_m),f_m(y_m)) > m \w (d_X(x_m,y_m) ).
\end{equation}
We may assume that the sequences $(x_m)$ and $(y_m)$ converge to limits $x$ and $y$ in $E$ respectively. We must have that $x\ne y$, otherwise we obtain a contradiction from the local uniform $\w$-continuity of $\mathcal{F}$ at $x$. Using local uniform $\w$-continuity at $x$ and $y$, it now follows from \eqref{eq:d(fn(xn),fn(yn))} that
\begin{equation}
\label{eq:d_Y to infty} 
 d_Y (f_m(x),f_m(y)) \to \infty, \quad \mbox{ as } m\to\infty.
\end{equation}

Let $\gamma: [0,1] \to X$ be a path from $x$ to $y$. For each point $u\in\gamma([0,1])$ there exist $0<r_u<\frac12$ and $L_u>0$ such that for all $p,q\in B_X(u,r_u)$ and all $f\in\mathcal{F}$,
\[ d_Y (f(p),f(q)) \leq L_u \w (d_X(p,q) ) \le L_u\w(1). \]
Since $\gamma([0,1])$ is compact, we can cover it with finitely many such balls, say $B_X(u_i,r_{u_i})$ for $i=1,\ldots J$. It follows that, for all $f\in\mathcal{F}$,
\[ d_Y(f(x), f(y)) \le \sum_{i=1}^J L_{u_i}\w(1) \]
in contradiction to \eqref{eq:d_Y to infty}.
\end{proof}

\subsection{Relative compactness and the Arzela-Ascoli Theorem}

We recall the definition of equicontinuity.

\begin{definition}
\label{def:equi}
Let $X,Y$ be subdomains of $S^n$ with conformal metrics and associated distance functions $d_X,d_Y$ respectively. Let $\mathcal{F} \subset C(X,Y)$.
\begin{enumerate}[(i)]
\item $\mathcal{F}$ is {\it equicontinuous at $x_0 \in X$} if  for every $\epsilon >0$, there exists $\delta >0$ such that $d_Y(f(x),f(x_0)) < \epsilon$ whenever $d_X(x,x_0) < \delta$ and $f\in \mathcal{F}$.
\item $\mathcal{F}$ is {\it equicontinuous on $X$} if  it is equicontinuous at each point $x_0 \in X$.
\end{enumerate}
\end{definition}

The Arzela-Ascoli Theorem in this setting takes the form (\cite[Theorem~47.1]{Munkres}):

\begin{theorem}
\label{thm:aa}
Let $\mathcal{F}$ be a family of continuous functions from a locally compact Hausdorff metric space $X$ to a metric space $Y$. Then 
$\mathcal{F}$ is relatively compact in $C(X,Y)$ if and only if
\begin{enumerate}[(i)]
\item the family $\mathcal{F}$ is equicontinuous on $X$ and
\item for every $x\in X$, the orbit $\mathcal{F}(x) = \{f(x) : f\in\mathcal{F} \}$ is relatively compact in $Y$.
\end{enumerate}
\end{theorem}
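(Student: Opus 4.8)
The plan is to prove the two implications separately, treating this as the Arzela--Ascoli theorem in the topology of uniform convergence on compact sets. I would set up the evaluation maps $e_x\colon C(X,Y)\to Y$, $e_x(f)=f(x)$, which are continuous for each fixed $x$, and exploit the embedding of $C(X,Y)$ into the product space $Y^X$ carrying the topology of pointwise convergence.

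For the necessity direction, assume $\overline{\mathcal{F}}$ is compact in $C(X,Y)$. Condition (ii) is immediate: since $e_x$ is continuous, $\overline{\mathcal{F}(x)}\subset e_x(\overline{\mathcal{F}})$ is a subset of a compact set and hence relatively compact, and this step uses nothing about $X$ beyond its being a metric space. For condition (i), fix $x_0\in X$ and $\epsilon>0$; here I would use local compactness to choose a compact neighbourhood $K$ of $x_0$. Restriction $f\mapsto f|_K$ sends the compact set $\overline{\mathcal{F}}$ to a compact, hence totally bounded, subset of $C(K,Y)$ with the sup-metric. Choosing a finite $(\epsilon/3)$-net $f_1,\dots,f_N$ and using that each $f_i$ is uniformly continuous on $K$, a standard three-$\epsilon$ estimate
\[ d_Y(f(x),f(x_0)) \le d_Y(f(x),f_i(x)) + d_Y(f_i(x),f_i(x_0)) + d_Y(f_i(x_0),f(x_0)) \]
for a suitable index $i$ yields a uniform $\delta$ witnessing equicontinuity at $x_0$.

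For the sufficiency direction, assume (i) and (ii), and let $\mathcal{G}$ be the closure of $\mathcal{F}$ inside $Y^X$ in the topology of pointwise convergence. By (ii), $\mathcal{F}\subset\prod_{x\in X}\overline{\mathcal{F}(x)}$, a product of compact sets that is compact by Tychonoff's theorem and closed in $Y^X$; hence $\mathcal{G}$ is a closed subset of a compact space and so is compact in the pointwise topology. One checks that equicontinuity passes to pointwise limits, so $\mathcal{G}$ is equicontinuous and in particular $\mathcal{G}\subset C(X,Y)$. The decisive step is the lemma that on an equicontinuous family the topology of pointwise convergence coincides with the topology of uniform convergence on compact subsets; granting this, $\mathcal{G}$ is also compact in $C(X,Y)$ and contains $\mathcal{F}$, so $\overline{\mathcal{F}}$ is a closed subset of $\mathcal{G}$ and therefore relatively compact.

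I expect the main obstacle to be this last coincidence-of-topologies lemma, which is the analytic heart of the argument: given a compact $E\subset X$ and $\epsilon>0$, equicontinuity lets one cover $E$ by finitely many balls on which every member of the family varies by less than $\epsilon$, and control at the finitely many centres (a pointwise, product-topology condition) then propagates to uniform control over all of $E$. In the metrizable setting relevant to this paper one could instead run a Cantor diagonal argument over a countable dense subset of $X$, but the Tychonoff route above avoids any separability hypothesis and matches the generality of the statement.
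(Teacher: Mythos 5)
Your proof is correct. Note, however, that the paper does not prove this statement at all: it quotes it directly as \cite[Theorem~47.1]{Munkres}, and your argument — embedding $\mathcal{F}$ into $Y^X$, invoking Tychonoff on $\prod_{x\in X}\overline{\mathcal{F}(x)}$, showing equicontinuity passes to pointwise limits, and using the coincidence of the pointwise and compact-open topologies on equicontinuous families (with local compactness entering only in the necessity direction via the compact neighbourhood and the $\epsilon/3$-net) — is essentially the standard proof given in that cited source.
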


If $X,Y$ are subdomains of $S^n$ with conformal metrics, then they satisfy the hypotheses of the theorem.
We can use the Arzela-Ascoli Theorem to formulate a characterization of relative compactness in the setting of $\w$-continuity, c.f. \cite[Theorem 7.1]{BM}, where the completeness of $d_Y$ shows that only one orbit needs to be checked for relative compactness. 

\begin{theorem}
\label{thm:1}
Let $X,Y$ be subdomains of $S^n$ with conformal metrics and associated distance functions $d_X,d_Y$ respectively and suppose the metric on $Y$ is complete. Let $\mathcal{F} \subset C(X,Y)$ be locally uniformly $\omega$-continuous. Then
$\mathcal{F}$ is relatively compact in $C(X,Y)$ if and only if there exists $x_0\in X$ such that $\mathcal{F}(x_0) = \{ f(x_0) : f\in \mathcal{F} \}$ is relatively compact in $Y$.
\end{theorem}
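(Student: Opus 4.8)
The plan is to deduce this from the Arzela-Ascoli Theorem (Theorem \ref{thm:aa}), using local uniform $\w$-continuity to supply equicontinuity and the completeness of $d_Y$ to upgrade ``one relatively compact orbit'' into ``every orbit relatively compact.'' The forward implication is immediate: if $\mathcal{F}$ is relatively compact in $C(X,Y)$, then Theorem \ref{thm:aa} guarantees that $\mathcal{F}(x)$ is relatively compact in $Y$ for every $x\in X$, and in particular for any chosen $x_0$.

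For the converse, I would first note that local uniform $\w$-continuity yields equicontinuity of $\mathcal{F}$ on $X$: given $x_0$ and $\epsilon>0$, the inequality $d_Y(f(x),f(x_0))\le L\w(d_X(x,x_0))$ valid on some $B_X(x_0,r)$ forces $d_Y(f(x),f(x_0))<\epsilon$ once $d_X(x,x_0)$ is small, since $\w$ is continuous with $\w(0)=0$. Thus condition (i) of Theorem \ref{thm:aa} holds, and the task reduces to verifying condition (ii): that $\mathcal{F}(x)$ is relatively compact in $Y$ for every $x\in X$, assuming this for the single point $x_0$. To transfer relative compactness, I would join $x_0$ to an arbitrary $x$ by a path $\gamma$ in the domain $X$. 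Its image $\gamma([0,1])$ is compact, so Proposition \ref{prop:1} furnishes a constant $L$ with $d_Y(f(p),f(q))\le L\w(d_X(p,q))$ for all $p,q\in\gamma([0,1])$ and all $f\in\mathcal{F}$; taking $p=x$ and $q=x_0$ gives a uniform bound $d_Y(f(x),f(x_0))\le M:=L\w(d_X(x,x_0))$. Consequently $\mathcal{F}(x)$ lies within $d_Y$-distance $M$ of the relatively compact, hence bounded, set $\mathcal{F}(x_0)$, so $\mathcal{F}(x)$ is itself $d_Y$-bounded.

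The crux, and the step where completeness is indispensable, is then to conclude that a $d_Y$-bounded subset of $Y$ is relatively compact. Here I would observe that $(Y,d_Y)$ is a length space whose metric induces the subspace topology, so that Lemma \ref{lemma:bilip} (comparison with the spherical metric on compactly contained balls) shows $(Y,d_Y)$ is locally compact. A complete, locally compact length space is proper by the Hopf-Rinow theorem, meaning closed bounded sets are compact; thus the closure of $\mathcal{F}(x)$ is compact in $Y$. This verifies condition (ii) for every $x$, and Theorem \ref{thm:aa} then delivers relative compactness of $\mathcal{F}$ in $C(X,Y)$. I expect the main obstacle to lie precisely in this last passage: without completeness the bounded orbit $\mathcal{F}(x)$ can accumulate on $\partial Y$ and fail to be relatively compact in $Y$, as happens for $\B^n$ with the Euclidean metric, so the argument must genuinely exploit that $d_Y$-bounded sets cannot escape a complete conformal metric.
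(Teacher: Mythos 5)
Your proposal is correct and follows essentially the same route as the paper's proof: both directions rest on Theorem \ref{thm:aa}, with equicontinuity supplied by local uniform $\w$-continuity, the bound $d_Y(f(x),f(x_0))\le L\w(d_X(x,x_0))$ obtained from Proposition \ref{prop:1} applied to a compact set (the paper simply uses the two-point set $\{x,x_0\}$ where you use a path), and relative compactness of each orbit deduced from the fact that closed balls in $(Y,d_Y)$ are compact. Your Hopf--Rinow justification of that last fact is exactly the content the paper asserts in one line (``$d_Y$ is complete and is obtained by integrating a conformal metric''), so the two arguments coincide in substance.
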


\begin{proof}
If $\mathcal{F}$ is relatively compact, then the conclusion is just the Arzela-Ascoli Theorem. For the converse, the local uniform $\w$-continuity condition implies that $\mathcal{F}$ is equicontinuous on $X$. Fix $x\in X$. Then the set $\{ x,x_0\}$ is compact and $\mathcal{F}$ is uniformly $\w$-continuous on $\{x,x_0\}$ by Proposition \ref{prop:1}. The relative compactness of $\mathcal{F}(x)$ now follows from the relative compactness of $\mathcal{F}(x_0)$, using the fact that all closed balls in $(Y,d_Y)$ are compact because $d_Y$ is complete and is obtained by integrating a conformal metric. The Arzela-Ascoli Theorem then implies that $\mathcal{F}$ is relatively compact in $C(X,Y)$. 
\end{proof}

We remark that the completeness of $d_Y$ is necessary in Theorem \ref{thm:1}, as the next example shows.
\begin{example}
Let $X=Y = (-1,1)^2$ equipped with the Euclidean distance. For $m\geq 2$ and $x\in (-1,1)$ consider the piecewise linear maps
\[ p_m(x) = \left \{ \begin{array}{cc} x & x\in (-1,0], \\ \frac{2(m-1)x}{m} & x\in (0,1/2), \\ \frac{ 2x+m-2}{m}   & x\in [1/2 ,1), \end{array} \right .\]
and define $f_m(x,y) = (p_m(x) ,y )$ for $(x,y) \in X$. 
The family $\{ f_m : m\geq 2 \} \subset C(X,Y)$ is uniformly Lipschitz, but the orbit of $(0,0)$ is relatively compact in $Y$ and the orbit of $(1/2,0)$ is not.
\end{example}

\subsection{The Escher condition}

\begin{definition}
Suppose that $Y\subset Z \subset S^n$ with distance functions $d_Y,d_Z$. Following \cite[\S 10]{BM} we say that $d_Y$ satisfies an Escher condition relative to $d_Z$ if for all $R>0$ and $\epsilon >0$ there exists $E\subset Y$ compact so that if $x\in Y \setminus E$ and $d_Y(x,y) <R$ then $d_Z(x,y) \leq \epsilon d_Y(x,y)$. We may also refer to an Escher condition between the corresponding metrics.
\end{definition}

The idea is that near $\partial Y$, disks of radius $R$ in $d_Y$ are contained in small disks in $d_Z$. It is worth bearing in mind the example of the hyperbolic metric in $\B^n$: near $\partial \B^n$, hyperbolic disks of radius $1$ have small Euclidean diameter. 

\begin{theorem}
\label{thm:esccond}
Suppose that $\tau_Y$ is a complete metric on $Y$ and $Y\subset Z \subset S^n$ with metric $\tau_Z$. Then
$\tau_Y$ satisfies an Escher condition relative to $\tau_Z$ if and only if
\[ \lim_{x\to \zeta} \frac{ \tau_Z(x)}{\tau_Y(x)} = 0,\]
for all $\zeta \in \partial Y$.
\end{theorem}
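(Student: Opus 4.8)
The plan is to bridge the infinitesimal data carried by the conformal densities $\tau_Y,\tau_Z$ with the macroscopic distance functions $d_Y,d_Z$. The key tool, which I would isolate as a preliminary lemma, is that a conformal metric $\tau|dx|$ is asymptotically linear at any interior point: working in a local coordinate chart about $x_0$,
\[ \lim_{y\to x_0} \frac{d(x_0,y)}{|x_0-y|} = \tau(x_0). \]
The upper bound follows by estimating the $d$-length of the straight segment $[x_0,y]$, namely $|x_0-y|\int_0^1 \tau(x_0+t(y-x_0))\,dt$, which tends to $\tau(x_0)|x_0-y|$ by continuity of $\tau$. For the lower bound, fix $\eta>0$ and a Euclidean ball $B(x_0,r)$ on which $\tau\ge\tau(x_0)-\eta$; any path starting at $x_0$ and leaving this ball has $d$-length at least $(\tau(x_0)-\eta)r$, so for $y$ close enough to $x_0$ the competing short paths stay inside the ball and have length at least $(\tau(x_0)-\eta)|x_0-y|$. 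Applying this to both $\tau_Y$ and $\tau_Z$ and dividing (so that the chart-dependent factor $|x_0-y|$ cancels) gives the central identity
\[ \frac{\tau_Z(x)}{\tau_Y(x)} = \lim_{y\to x} \frac{d_Z(x,y)}{d_Y(x,y)}, \qquad x\in Y. \]

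For the implication that the Escher condition forces the ratio to vanish, I would fix $\zeta\in\partial Y$ and $\epsilon>0$ and apply the Escher condition with $R=1$ to obtain a compact set $E\subset Y$. Since $E$ is compact and $\zeta\notin Y$, some neighbourhood $N$ of $\zeta$ in $S^n$ misses $E$. Then for every $x\in N\cap Y$ and every $y$ with $d_Y(x,y)<1$ we have $d_Z(x,y)\le\epsilon\,d_Y(x,y)$; letting $y\to x$ in the central identity yields $\tau_Z(x)/\tau_Y(x)\le\epsilon$ throughout $N\cap Y$. As $\epsilon>0$ is arbitrary, the ratio tends to $0$ at $\zeta$.

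For the converse, assume the limit condition and fix $R,\epsilon>0$. Continuity of $\tau_Z/\tau_Y$ on $Y$ together with the hypothesis shows that $A=\{u\in Y:\tau_Z(u)/\tau_Y(u)\ge\epsilon\}$ has closure in $S^n$ disjoint from $\partial Y$, since any limit point on $\partial Y$ would contradict the vanishing of the ratio there. Hence $E_0:=\overline{A}$ is a compact subset of $Y$ on whose complement $\tau_Z/\tau_Y<\epsilon$. I then fatten it by setting $E:=\{u\in Y:d_Y(u,E_0)\le R\}$, which is closed and contained in a closed $d_Y$-ball of radius $R+\diam E_0$; by completeness of $\tau_Y$ that ball is compact, so $E$ is compact. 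For $x\in Y\setminus E$ we have $d_Y(x,E_0)>R$, whence $B_Y(x,R)\cap E_0=\emptyset$. Given $y$ with $d_Y(x,y)<R$, any path $\gamma$ in $Y$ from $x$ to $y$ of $\tau_Y$-length less than $R$ must stay inside $B_Y(x,R)\subset Y\setminus E_0$, so $\int_\gamma \tau_Z\,|dx|=\int_\gamma(\tau_Z/\tau_Y)\,\tau_Y\,|dx|\le\epsilon\int_\gamma\tau_Y\,|dx|$. Taking the infimum over such short paths (whose $\tau_Y$-lengths approach $d_Y(x,y)$) gives $d_Z(x,y)\le\epsilon\,d_Y(x,y)$, which is precisely the Escher condition.

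The routine-sounding local lemma is in fact the technical heart, and the step I expect to require the most care is the lower bound there, where one must rule out that short geodesic-like paths dip into regions of small density. In the second implication the corresponding obstacle is ensuring the competing paths cannot escape the set $Y\setminus E_0$ on which the density ratio is controlled; this is handled by restricting to paths of length less than $R$, which automatically remain in $B_Y(x,R)$, while \emph{completeness of $\tau_Y$} enters decisively to guarantee that the fattened set $E$ is compact. Without completeness the $R$-neighbourhood of $E_0$ could fail to be compact, and this is exactly why the hypothesis is imposed.
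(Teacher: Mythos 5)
Your proof is correct, and it takes essentially the same route as the argument the paper defers to (Beardon--Minda, Theorem 10.1): the infinitesimal identity $\tau_Z(x)/\tau_Y(x)=\lim_{y\to x} d_Z(x,y)/d_Y(x,y)$ handles the forward implication, while the converse uses the compact set where the density ratio is at least $\epsilon$, fattened by a $d_Y$-neighbourhood of radius $R$ (compact precisely because completeness makes closed $d_Y$-balls compact), together with integration of $\tau_Z<\epsilon\,\tau_Y$ along paths of length less than $R$. Your identification of where completeness enters matches the intended proof as well, so there is nothing to add.
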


The proof is identical to \cite[Theorem 10.1]{BM} and is omitted. The classical definition of a normal family of holomorphic functions in $\C$ requires the limit of a convergent subsequence to either be holomorphic or for the subsequence to diverge to infinity uniformly on compact subsets. The latter situation should be viewed in the context of the Riemann sphere and the subsequence converging uniformly on compact subsets to the point at infinity. In light of this, we recall the definition of a normal family from \cite[\S 9]{BM}.

\begin{definition}
\label{def:nfr}
Suppose $X,Y,Z$ are domains in $S^n$ equipped with conformal metrics, $Y\subset Z$ and $\mathcal{F} \subset C(X,Y)$. Then $\mathcal{F}$ is a {\it normal family relative to $Z$} if $\mathcal{F}$ is relatively compact in $C(X,Z)$ and the closure of $\mathcal{F}$ in $C(X,Z)$ is the closure of $\mathcal{F}$ in $C(X,Y)$ together possibly with constant maps into $\partial Y$, viewing the boundary of $Y$ as a subset of $Z$.
\end{definition}

We remark that the specific choice of conformal metrics on $X,Y$ and $Z$ is irrelevant in this definition, since any choices of conformal metrics generate the same topology on $C(X,Y)$ and $C(X,Z)$.
In the discussion preceding this definition, $X$ and $Y$ are $\C$ with the Euclidean metric and $Z$ is the Riemann sphere with the spherical metric. We next show how, for locally uniformly $\omega$-continuous families, the Escher condition allows us to use relative compactness and normality as synonyms, c.f. \cite[Theorem 11.1]{BM}.

\begin{theorem}
\label{thm:escnormal}
Let $X,Y,Z$ be subdomains of $S^n$ with conformal metrics and corresponding distance functions $d_X,d_Y,d_Z$, respectively, and $Y\subset Z$. Suppose that $Y$ is relatively compact in $Z$, that $d_Y$ satisfies an Escher condition relative to $d_Z$, and that $\mathcal{F}\subset C(X,Y)$ is locally uniformly $\w$-continuous. Then $\mathcal{F}$ is relatively compact in $C(X,Z)$ if and only if $\mathcal{F}$ is a normal family relative to $Z$.
\end{theorem}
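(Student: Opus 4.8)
The plan is as follows. One direction is immediate: if $\mathcal{F}$ is normal relative to $Z$ then, by the first clause of Definition \ref{def:nfr}, $\mathcal{F}$ is relatively compact in $C(X,Z)$. So the substance is the forward implication. Assume $\mathcal{F}$ is relatively compact in $C(X,Z)$; the first clause of normality is then automatic and I must identify the closure of $\mathcal{F}$ in $C(X,Z)$. Since $C(X,Z)$ is metrizable, it suffices to analyse an arbitrary subsequential limit: let $f_m \in \mathcal{F}$ with $f_m \to g$ in $C(X,Z)$. As each $f_m$ maps into $Y$ and $Y$ is relatively compact in $Z$, its closure $\overline{Y}$ is a compact subset of $Z$, so $g(X) \subset \overline{Y} = Y \cup \partial Y$ with $\partial Y \subset Z$. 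Partition $X = A \cup B$ where $A = g^{-1}(Y)$ and $B = g^{-1}(\partial Y)$.

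The set $A$ is open because $Y$ is open in $Z$ and $g$ is continuous. The key step is to show that $B$ is open and that $g$ is locally constant there, by feeding local uniform $\w$-continuity into the Escher condition. Fix $x_0 \in B$. Local uniform $\w$-continuity at $x_0$ gives $r, L > 0$ with $d_Y(f_m(x), f_m(x_0)) \le L\w(r) < R := L\w(r)+1$ for all $x \in B_X(x_0, r)$ and all $m$. Given $\epsilon > 0$, the Escher condition supplies a compact $E \subset Y$ such that $w \in Y \setminus E$ and $d_Y(w, y) < R$ force $d_Z(w, y) < \epsilon R$. Since $g(x_0) \in \partial Y$ lies outside the compact, hence $d_Z$-closed, set $E$, we have $f_m(x_0) \in Y \setminus E$ for all large $m$; taking $w = f_m(x_0)$, $y = f_m(x)$ yields $d_Z(f_m(x_0), f_m(x)) < \epsilon R$, and letting $m \to \infty$ gives $d_Z(g(x_0), g(x)) \le \epsilon R$ for every $x \in B_X(x_0, r)$. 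As $R$ is independent of $\epsilon$, letting $\epsilon \to 0$ forces $g \equiv g(x_0)$ on $B_X(x_0, r)$. Hence $B_X(x_0, r) \subset B$, so $B$ is open and $g$ is locally constant on $B$.

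Since $X$ is a domain it is connected, and the disjoint open sets $A, B$ cannot both be nonempty. If $B = X$ then $g$ is a locally constant map on a connected set, hence a single constant in $\partial Y$, i.e. a constant map into $\partial Y$ as required. If instead $A = X$, then $g(X) \subset Y$ and I upgrade the convergence from $C(X,Z)$ to $C(X,Y)$: for a compact $K \subset X$, the set $g(K)$ is compact in the open set $Y$, so for large $m$ the images $f_m(K)$ lie in a fixed compact neighbourhood $N \subset Y$ of $g(K)$; on $N$, Lemma \ref{lemma:bilip} gives $d_Y \le L_N d_Z$, whence $\sup_{K} d_Y(f_m, g) \le L_N \sup_{K} d_Z(f_m, g) \to 0$. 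Thus $f_m \to g$ in $C(X,Y)$ and $g$ lies in the closure of $\mathcal{F}$ in $C(X,Y)$. This proves the closure of $\mathcal{F}$ in $C(X,Z)$ is contained in the closure of $\mathcal{F}$ in $C(X,Y)$ together with constant maps into $\partial Y$. The reverse containment of the $C(X,Y)$-closure is the same bi-Lipschitz comparison run with $d_Z \le L_N d_Y$, showing that $C(X,Y)$-convergence implies $C(X,Z)$-convergence. Together these give the equality of closures in Definition \ref{def:nfr}.

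The main obstacle is the middle step, extracting local constancy of the limit on $B$, where the uniform $d_Y$-modulus bound produced by $\w$-continuity must be combined with the Escher condition to collapse $d_Z$-distances near $\partial Y$; the crucial bookkeeping point is that the radius $R$ is fixed by the $\w$-continuity data before $\epsilon$ is chosen, which is exactly what legitimises the limit $\epsilon \to 0$.
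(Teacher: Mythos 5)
Your proof is correct, and its engine is the same as the paper's: feed the uniform $d_Y$-bound supplied by $\w$-continuity into the Escher condition to collapse $d_Z$-distances once the limit touches $\partial Y$. Where you differ is in how this local collapse is globalized. The paper does not decompose $X$ at all: having found $x_0$ with $f(x_0)\in\partial Y$, it takes an \emph{arbitrary} $x\in X$, applies Proposition \ref{prop:1} to the two-point compact set $\{x_0,x\}$ to obtain the uniform bound $d_Y(f_m(x),f_m(x_0))\le L\w(d_X(x,x_0))$, and then the Escher argument yields $f(x)=f(x_0)$ in one stroke; the connectedness of $X$ enters only through the path-chaining argument hidden inside the proof of Proposition \ref{prop:1}. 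You instead stay purely local --- using nothing beyond the definition of local uniform $\w$-continuity on a ball $B_X(x_0,r)$ --- prove that $g^{-1}(\partial Y)$ is open with $g$ locally constant there, and then invoke the connectedness of $X$ explicitly via the open partition $X=A\cup B$. The two routes are logically equivalent, with the paper's being shorter at the price of citing Proposition \ref{prop:1}, and yours being self-contained. Your write-up is also more scrupulous about the definitional bookkeeping in Definition \ref{def:nfr}, which the paper leaves implicit: you verify, via Lemma \ref{lemma:bilip} on compact subsets of $Y$, that a $C(X,Z)$-limit landing in $Y$ is in fact a $C(X,Y)$-limit of the same sequence (hence lies in the closure of $\mathcal{F}$ in $C(X,Y)$, not merely in $C(X,Y)$), and you check the reverse containment of the $C(X,Y)$-closure in the $C(X,Z)$-closure. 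These are routine but genuine points needed for the equality of closures, and your argument supplies them.
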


\begin{proof}
The normality of $\mathcal{F}$ relative to $Z$ implies that $\mathcal{F}$ is relatively compact in $C(X,Z)$.

For the converse, suppose that $\mathcal{F}$ is relatively compact in $C(X,Z)$.
Let $f_m \in \mathcal{F}$ be a sequence and, passing to a subsequence and relabelling if necessary, we can assume that $f_m \to f$ locally uniformly in $C(X,Z)$. We need to show that either $f$ is in $C(X,Y)$ or $f$ is constant. Let us assume that $f$ is not in $C(X,Y)$, in which case there exists  $x_0 \in X$ with $f(x_0) = \lim_{m\to \infty} f_m(x_0) \in \partial Y$.
Take $x\in X$ and $\epsilon>0$. Since $\{ x_0,x\}$ is compact and $\mathcal{F}$ is locally uniformly $\w$-continuous, Proposition~\ref{prop:1} implies that
\begin{equation}
\label{eq:unifwcts} 
d_Y(f_m(x),f_m(x_0)) \leq L \w (d_X(x,x_0))
\end{equation}
for all $m\in \N$. Set $C = L\w(d_X(x,x_0))$. The Escher condition means that we can find a compact set $E\subset Y$ such that if $v\in Y\setminus E$ and $d_Y(u,v) <C$ then 
\begin{equation}
\label{eq:escherC} 
d_Z(u,v) < \frac{\epsilon}{C}d_Y(u,v).
\end{equation}
Since $E$ is a compact subset of $Y$ and $f_m(x_0)\to f(x_0)\in\partial Y$, there is $M\in \N$ such that if $m\geq M$ then $f_m(x_0)\in Y\setminus E$ and  $d_Z(f_m(x_0) , f(x_0)) < \epsilon$.
Hence by \eqref{eq:unifwcts} and \eqref{eq:escherC}, if $m\geq M$ then
\[ d_Z(f_m(x) , f(x_0)) \leq d_Z(f_m(x) , f_m(x_0)) + d_Z(f_m(x_0),f(x_0)) < 2\epsilon.\]
It follows that $f(x) = f(x_0)$ and hence $f$ is a constant map into $\partial Y$.
\end{proof}

Note that, unlike \cite[Theorem 11.1]{BM}, the above result does not assume that the metrics on the codomains $Y$ and $Z$ are complete. Theorem~\ref{thm:escnormal}  immediately has the following corollary.

\begin{corollary}
\label{cor:esc1}
Let $X\subset \R^n$ be equipped with a conformal metric and let $\mathcal{F} \subset C(X,\R^n)$ be locally uniformly $\w$-continuous with respect to $d_X$ and the Euclidean distance on $\R^n$. Then $\mathcal{F}$ is a normal family relative to $S^n$ equipped with the spherical metric. 
\end{corollary}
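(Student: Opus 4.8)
The plan is to recognize this corollary as a direct application of Theorem \ref{thm:escnormal} with the codomains $Y=\R^n$ carrying the Euclidean metric and $Z=S^n$ carrying the spherical metric $\sigma$. Since Theorem \ref{thm:escnormal} is stated as an equivalence between relative compactness in $C(X,S^n)$ and normality relative to $S^n$, I would first check its standing hypotheses and then, separately, establish that $\mathcal{F}$ really is relatively compact in $C(X,S^n)$; feeding this into the theorem then yields normality relative to $S^n$.

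Verifying the hypotheses of Theorem \ref{thm:escnormal} is largely routine. The closure of $Y=\R^n$ inside $Z=S^n=\R^n\cup\{\infty\}$ is all of $S^n$, which is compact, so $Y$ is relatively compact in $Z$. For the Escher condition I would appeal to Theorem \ref{thm:esccond}: the Euclidean metric is complete on $\R^n$, so the condition is equivalent to $\tau_Z(x)/\tau_Y(x)\to 0$ as $x$ approaches $\partial Y$. Here $\partial Y$ consists of the single point $\infty$, and by \eqref{eq:sigrn} we have $\tau_Z(x)=2/(1+|x|^2)$ while $\tau_Y(x)=1$, so the ratio equals $2/(1+|x|^2)\to 0$ as $|x|\to\infty$. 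The final hypothesis, that $\mathcal{F}\subset C(X,\R^n)$ be locally uniformly $\w$-continuous with respect to $d_X$ and the Euclidean metric, is exactly what is assumed.

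The one genuinely substantive step is to show that $\mathcal{F}$ is relatively compact in $C(X,S^n)$. For this I first upgrade the local uniform $\w$-continuity from the Euclidean to the spherical metric. The natural tool, Lemma \ref{lemma:bilip}, only gives bi-Lipschitz equivalence of $d_Y$ and $d_Z$ on compact subsets of $\R^n$, which is not enough, because the images $f(x)$ of points under maps in $\mathcal{F}$ may escape to infinity. Instead I would use the global one-sided bound coming from $\tau_Z(x)=2/(1+|x|^2)\le 2=2\tau_Y(x)$ for all $x$: integrating this density inequality along the Euclidean segment joining $u$ and $v$ gives $\sigma(u,v)\le 2|u-v|$, that is, $d_Z(u,v)\le 2\,d_Y(u,v)$ on $\R^n$. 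Consequently the Euclidean local uniform $\w$-continuity of $\mathcal{F}$ immediately passes to the spherical metric with the Lipschitz constants doubled, so $\mathcal{F}$ is locally uniformly $\w$-continuous as a family in $C(X,S^n)$. Since $S^n$ is compact, every orbit $\mathcal{F}(x)$ is automatically relatively compact in $(S^n,\sigma)$ and the spherical metric is complete, so Theorem \ref{thm:1} (or the Arzela--Ascoli Theorem directly) shows that $\mathcal{F}$ is relatively compact in $C(X,S^n)$.

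With relative compactness in $C(X,S^n)$ in hand and the hypotheses verified, Theorem \ref{thm:escnormal} yields that $\mathcal{F}$ is a normal family relative to $S^n$. The crux of the argument, and the step where care is needed, is the promotion of $\w$-continuity to the spherical metric: the obstruction is precisely that images can run off to infinity, which rules out the compact-subset equivalence of Lemma \ref{lemma:bilip} and forces the use of the uniform density bound $\tau_Z\le 2\tau_Y$.
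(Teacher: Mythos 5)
Your proof is correct and follows essentially the same route as the paper: pass from Euclidean to spherical $\w$-continuity via the global bound $\sigma(u,v)\le 2|u-v|$, deduce relative compactness in $C(X,S^n)$ from Arzela--Ascoli (orbits being automatic by compactness of $S^n$), and conclude with Theorem \ref{thm:escnormal}. The only difference is that you spell out the verification of the Escher condition through Theorem \ref{thm:esccond} and the relative compactness of $\R^n$ in $S^n$, details the paper simply asserts.
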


\begin{proof}
Since the spherical distance $\sigma$ is bounded above by twice the Euclidean distance on $\R^n$, we immediately obtain that $\mathcal{F}$ is locally uniformly $\omega$-continuous with respect to $d_X$ and~$\sigma$. The first condition in the Arzela-Ascoli Theorem is thus satisfied. The second condition is satisfied automatically since $S^n$ is compact. Hence $\mathcal{F}$ is relatively compact in $C(X,S^n)$. Since the Euclidean and spherical metrics satisfy the Escher condition, Theorem \ref{thm:escnormal} implies that $\mathcal{F}$ is a normal family relative to $S^n$.
\end{proof}

\section{Quasiregular mappings}

The setting of quasiregular mappings in $\R^n$, for $n\geq 2$, is the natural counterpart to function theory in the plane. We refer to \cite{Rickman} for a comprehensive review of the theory of quasiregular mappings but we briefly outline the definition and important properties below.

\subsection{Preliminaries}

A {\it quasiregular mapping} in a domain $U\subset \R^n$ for $n\geq 2$ is a continuous mapping in the Sobolev space $W^1_{n,loc}(U)$ where there is a uniform bound on the distortion, that is, there exists $K\geq 1$ such that
\[|f'(x)|^n \leq KJ_f(x)\]
almost everywhere in $U$, where $|f'(x)|$ is the operator norm of $f'(x)$ and $J_f(x)$ is the Jacobian determinant of $f$ at $x$. The minimal $K$ for which this inequality holds is called the {\it outer dilatation} and denoted by $K_O(f)$. As a consequence of this, there is also $K' \geq 1$ such that 
\[J_f(x) \leq K' \inf_{|h|=1}|f'(x)h|^n\]
holds almost everywhere in $U$. The minimal $K'$ for which this inequality holds is called the {\it inner dilatation} and denoted by $K_I(f)$. We then have that $K(f)= \max \{K_O(f), K_I(f) \}$ is the maximal dilatation of $f$. A $K$-quasiregular mapping is a quasiregular mapping for which $K(f) \leq K$. An injective quasiregular mapping is called quasiconformal. It turns out that non-constant quasiregular mappings are discrete and open.

To deal either with quasiregular mappings that are defined at infinity, or with quasiregular maps that have a discrete set of poles, we may pre- or post-compose by a M\"obius map $A$ (which could be chosen to be a spherical isometry) that moves the point at infinity to the origin, and then locally applying the above condition. Such mappings are sometimes called quasimeromorphic, but we will keep the nomenclature quasiregular and bear in mind the domain and range can include the point at infinity.

The main property that we are interested in here is the fact that quasiregular mappings are locally H\"older continuous, as the following result illustrates.

\begin{theorem}[\cite{Rickman}, Theorem III.1.11]
\label{thm:ric}
Let $n\geq 2$ and $K\geq 1$.
Let $U \subset \R^n$ be a bounded domain and $f:U \to \R^n$ be a bounded $K$-quasiregular mapping. Suppose that $E\subset U$ is compact, let $\delta = d(E, \partial U)$, where $d$ denotes Euclidean distance and let $r$ denote the diameter of $f(U)$. Then
\[ |f(x) - f(y)| \leq C |x-y|^{\alpha},\]
for all $x\in E$ and $y \in U$, and where $\alpha = K^{1/(1-n)}$, $C = \lambda(n)\delta ^{-\alpha}r$ with $\lambda(n)$ a constant that depends only on $n$.
\end{theorem}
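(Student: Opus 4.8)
The plan is to reduce the statement to the conformal capacity of condensers (equivalently, the modulus of the connecting curve family), combined with the capacity inequality for $K$-quasiregular mappings from \cite{Rickman}. First I would dispose of the trivial cases: if $f$ is constant, or if $|x-y|\ge\delta$, the inequality holds for any $\lambda(n)\ge 1$, since then $C|x-y|^\alpha\ge\lambda(n)\delta^{-\alpha}r\cdot\delta^\alpha\ge r\ge|f(x)-f(y)|$. So assume $f$ is non-constant (hence discrete and open) and set $a:=|x-y|<\delta$. Fix $b<\delta$ with $\overline{B(x,b)}\subset U$, which is possible because $x\in E$ and $d(E,\partial U)=\delta$, and let $b\to\delta$ at the end. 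Note $f(x),f(y)\in f(\overline{B(x,a)})$, since $x$ is the centre and $y\in\partial B(x,a)$, and this set is a continuum.

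The first ingredient is the exact capacity of the spherical ring condenser $(B(x,b),\overline{B(x,a)})$, namely $\omega_{n-1}\big(\log(b/a)\big)^{1-n}$, where $\omega_{n-1}$ is the surface area of the unit sphere. The second, and the step I expect to be the main obstacle, is the capacity inequality for the $K$-quasiregular map $f$: the image condenser satisfies
\[ \operatorname{cap}\big(f(B(x,b)),f(\overline{B(x,a)})\big)\ \le\ K\,\omega_{n-1}\big(\log(b/a)\big)^{1-n}. \]
This rests on the full machinery of the modulus of curve families and on path lifting for discrete open maps developed in \cite{Rickman}; the sharp constant here is in fact the inner dilatation $K_I\le K$, which is exactly what makes the radial examples $f_t(x)=x|x|^{t-1}$ extremal and explains the appearance of the exponent $K^{1/(1-n)}$.

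For the matching lower bound I would exploit the geometry of the image. Writing $s:=|f(x)-f(y)|$, the inner plate $f(\overline{B(x,a)})$ is a continuum with $\diam f(\overline{B(x,a)})\ge s$, while $f(B(x,b))\subset f(U)$ has diameter at most $r$ and so lies in a Euclidean ball of radius $r$. By monotonicity of capacity under enlarging the open set, followed by the extremal (Gr\"otzsch ring / spherical symmetrization) estimate from \cite{Rickman}, the image condenser has capacity at least
\[ \operatorname{cap}\big(f(B(x,b)),f(\overline{B(x,a)})\big)\ \ge\ \omega_{n-1}\big(\log(\lambda(n)\,r/s)\big)^{1-n}, \]
for a dimensional constant $\lambda(n)\ge1$.

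Combining the three displays and cancelling $\omega_{n-1}$ gives $\big(\log(\lambda(n)r/s)\big)^{1-n}\le K\big(\log(b/a)\big)^{1-n}$. Raising to the negative power $1/(1-n)$ reverses the inequality, and letting $b\to\delta$ yields $\log(\lambda(n)r/s)\ge K^{1/(1-n)}\log(\delta/a)=\alpha\log(\delta/a)$. Exponentiating gives $\lambda(n)r/s\ge(\delta/a)^{\alpha}$, that is $s\le\lambda(n)\delta^{-\alpha}r\,|x-y|^{\alpha}$, which is the claim with $C=\lambda(n)\delta^{-\alpha}r$. The remaining points requiring care are purely technical: ensuring $\overline{B(x,b)}\subset U$, controlling the multiplicity of $f$ in the capacity inequality, and the degenerate case $s=0$.
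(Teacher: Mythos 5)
Your proof is correct and is essentially the proof of the cited source: the paper itself does not prove this statement but imports it as Theorem III.1.11 of Rickman's monograph, whose argument is exactly the one you outline --- the spherical-ring capacity $\omega_{n-1}(\log(b/a))^{1-n}$, the Poletsky/$K_I$ capacity inequality $\operatorname{cap}(fA,fC)\le K_I(f)\operatorname{cap}(A,C)\le K\operatorname{cap}(A,C)$, a spherical-symmetrization (Gr\"otzsch-ring) lower bound for the image condenser whose inner continuum contains $f(x)$ and $f(y)$, and then the same algebra with $b\to\delta$. The only superfluous point is your closing worry about multiplicity: since you use the $K_I$-inequality rather than the $K_O$-inequality (where the counting function $N(f,A)$ genuinely enters), no multiplicity control is needed.
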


Henceforth, we will denote by $\alpha$ the constant $K^{1/(1-n)}$. Below we will prove some results on the order of growth of various classes of mappings. We recall the required notions here. 
Suppose $f:U \to S^n$ is a non-constant $K$-quasiregular mapping.
The local index $i(x,f)$ is the infimum of $\sup_y \operatorname{card}f^{-1}(y)\cap V$, where $V$ runs over all neighbourhoods of $x$.
For $y\in S^n$ and a Borel set $E$ with $\overline{E}$ compact in $U$, define
\[ n(E,y) = \sum_{x\in f^{-1}(y) \cap E} i(x,f),\]
that is, $n(E,y)$ counts the number of points in $f^{-1}(y)\cap E$ with multiplicity. For $r>0$, $n(r,y)$ is defined to be $n(\overline{B(0,r)},y)$.

We define $A_{f,\sigma}(r)$ to be the average of $n(r,y)$ over $S^n$ with respect to spherical measure, that is, 
\[ A_{f,\sigma}(r) = \frac{1}{\omega_n} \int_{\R^n} \frac{ n(r,y)}{(1+|y|^2)^n } \: dm(y) = \frac{1}{\omega_n} \int_{\overline{B(0,r)}} \frac{J_f(x)}{(1 +  |f(x)|^2)^n} \: dm(x),\]
where $dm$ denotes Lebesgue measure and $\w_n$ the surface area of $S^n$. If the context is clear, we abbreviate $A_{f,\sigma}(r)$ to $A(r)$. 
We note that our formulas differ from those in Rickman's monograph \cite{Rickman} by a factor of $2^n$ as we are identifying $\R^n \cup \{ \infty \}$ with the unit sphere $S^n$ in $\R^{n+1}$, whereas Rickman uses a sphere of radius $1/2$.
% $S^n(e_{n+1}/2 , 1/2 ) \subset \R^{n+1}$. Here $e_{n+1}$ denotes the $(n+1)$'th unit vector.
We also define
\begin{equation}
\label{eq:afr} 
A(x_0,r) = \frac{1}{\omega_n} \int_{\overline{B(x_0,r)}} \frac{J_f(x)}{(1 +  |f(x)|^2)^n} \: dm(x),
\end{equation}
which has the interpretation of the normalized spherical volume of $f(\overline{B(x_0,r)})$ with multiplicity taken into account.

If $Y$ is an $(n-1)$-dimensional sphere in $\overline{\R^n}$, then $\nu(E,Y)$ denotes the average  of $n(E,y)$ over $Y$ with respect to the $(n-1)$-dimensional spherical measure. In the particular case where $E = \overline{B(0,r)}$, we write $\nu(r,s)$ for $\nu ( \overline{B(0,r)}, \partial B(0,s))$.

For quasiregular mappings on $\R^n$, the order $\mu_f$ and lower order $\lambda_f$ of $f$ are defined by
\[ \mu_f = \limsup_{r\to \infty} \frac{ \log A(r) }{\log r}, \quad \lambda_f = \liminf_{r\to \infty} \frac{\log A(r)}{\log r}.\]
If $f$ has no poles then the order and lower order can be expressed in terms of the maximum modulus function
\[ M(r,f) = \sup_{|x| = r} |f(x)| \]
via
\[ \mu_f = \limsup_{r\to \infty} (n-1) \frac{\log \log M(r,f)}{\log r}, \quad \lambda_f = \liminf_{r\to \infty} (n-1) \frac{\log \log M(r,f)}{\log r}.\]
For quasiregular mappings in the unit ball $\B^n$, analogous definitions for order and lower order hold, with $\log r$ replaced by $\log\frac{1}{1-r}$ and limits as $r\to 1^-$.

\subsection{Families of $K$-quasiregular mappings}

We now make precise the notion of a normal family of quasiregular mappings. First we need the following definition.

\begin{definition}
If $X,Y$ are subdomains of $S^n$, then denote by $\mathcal{Q}(X,Y)$ the subset of $C(X,Y)$ consisting of quasiregular mappings from $X$ to $Y$. Moreover, for $K\geq 1$, denote by $\mathcal{Q}_K(X,Y)$ the collection of $K$-quasiregular mappings from $X$ to $Y$.
\end{definition}

The reason for this definition is that if $X,Y\subset \R^n$ are domains and $f_m\in \mathcal{Q}_K(X,Y)$ is a sequence of $K$-quasiregular mappings which converges locally uniformly to a map $f\in C(X,Y)$, then $f$ itself must be in $\mathcal{Q}_K(X,Y)$ by \cite[Theorem VI.8.6]{Rickman}. Extending to include the point at infinity in domain or range can be handled via M\"obius maps. 
In the absence of such a uniform bound on the maximal dilatation of the mappings in the family, there is no longer a guarantee that any limit function is quasiregular. Consider, for example, the planar family $\{ f_K(x+iy) = Kx+iy : K>0 \}$.

\begin{definition}
\label{def:qrnormal}
Let $X,Y$ be subdomains of $S^n$ with conformal metrics, let $K\geq 1$ and let $\mathcal{F} \subset \mathcal{Q}_K(X,Y)$. Then we say that $\mathcal{F}$ is a normal family if any (and hence all) of the following equivalent statements hold:
\begin{enumerate}
\item $\mathcal{F}$ is a normal family relative to $S^n$ in the sense of Definition~\ref{def:nfr};
\item $\mathcal{F}$ is relatively compact in $C(X,S^n)$;
\item every sequence $(f_m)$ in $\mathcal{F}$ has a subsequence that converges uniformly on compact subsets of $X$, in the spherical metric, to a limit function $f\colon X\to S^n$.
\end{enumerate}
\end{definition}

Note that (iii) is the definition stated by Miniowitz in \cite{Min}. Recall that statements (i) and (ii) are independent of the choice of conformal metrics on $X, Y$ and $S^n$, and that (ii) is equivalent to (iii); see the remarks preceding Definition~\ref{def:w-cts}. To see that (i) and (ii) are equivalent, we need to show that for $\mathcal{F}\subset \mathcal{Q}_K(X,Y)$, any map in the closure of $\mathcal{F}$ in $C(X,S^n)$ is either in the closure of $\mathcal{F}$ in $C(X,Y)$ or is a constant map into $\partial Y$. This can be obtained from the following quasiregular version of Hurwitz's Theorem, which implies that a map in the closure of $\mathcal{Q}_K(X,Y)$ in $C(X,S^n)$ must be constant if its image contains any value $a\notin Y$.

\begin{theorem}[Hurwitz's Theorem, see \cite{Min} Lemma 2]
Let $f_m: U \to \R^n \setminus \{a\}$ be a sequence of $K$-quasiregular mappings that converges locally uniformly on $U$ to a $K$-quasiregular mapping $f$. Then $f$ is either constant or $f$ omits $a$ in $U$.
\end{theorem}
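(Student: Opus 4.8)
The plan is to argue by contradiction using the topological degree, in direct analogy with the classical proof of Hurwitz's theorem via the argument principle. Assume that $f$ is non-constant; the goal is then to show that $f$ omits the value $a$. Suppose instead that $f(x_0) = a$ for some $x_0 \in U$. Since a non-constant quasiregular mapping is discrete and open, the fibre $f^{-1}(a)$ is discrete, so I can choose $r > 0$ with $\overline{B(x_0,r)} \subset U$ and $f(x) \neq a$ whenever $0 < |x - x_0| \leq r$. In particular $a \notin f(\partial B(x_0,r))$, and compactness of the sphere $\partial B(x_0,r)$ provides
\[ \epsilon = \min_{|x - x_0| = r} |f(x) - a| > 0. \]

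Next I would bring in the topological degree $\mu(a, f, B(x_0,r))$ of $f$ at $a$ over $B(x_0,r)$ (see \cite{Rickman}). Because $B(x_0,r)$ contains $x_0$ as the only preimage of $a$, the degree localizes to the local index, $\mu(a, f, B(x_0,r)) = i(x_0, f)$, and since non-constant quasiregular mappings are sense-preserving this index is a positive integer; in particular $\mu(a, f, B(x_0,r)) \geq 1$. The crucial feature of the degree is its stability: as $f_m \to f$ locally uniformly, for all large $m$ we have $\sup_{\overline{B(x_0,r)}} |f_m - f| < \epsilon$, so on $\partial B(x_0,r)$ the straight-line homotopy $H_t = (1-t)f + t f_m$ satisfies $|H_t(x) - a| \geq |f(x) - a| - |f_m(x) - f(x)| > 0$ and hence never hits $a$. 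Homotopy invariance then forces $\mu(a, f_m, B(x_0,r)) = \mu(a, f, B(x_0,r)) \geq 1$ for all large $m$. A nonzero degree means the equation $f_m(x) = a$ has a solution in $B(x_0,r)$, contradicting $f_m(U) \subset \R^n \setminus \{a\}$. This contradiction shows that $f$ omits $a$.

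I expect the main obstacle to be the justification that $\mu(a, f, B(x_0,r)) = i(x_0, f) \geq 1$, which rests on the degree theory for quasiregular mappings in Rickman's monograph: that a non-constant quasiregular map is sense-preserving with strictly positive local index, and that on a normal neighbourhood the degree agrees with this index. Once these are granted, the homotopy-invariance and solvability properties of the Brouwer degree are standard and supply the contradiction immediately. An alternative, should I wish to avoid explicit degree theory, would be to use the counting function $n(\overline{B(x_0,r)}, y)$ introduced above, noting that $y \mapsto n(\overline{B(x_0,r)}, y)$ records preimages with multiplicity and is locally constant away from the image of the boundary; but the degree-theoretic formulation is cleaner and makes the stability under uniform convergence transparent.
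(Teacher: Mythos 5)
Your proof is correct, and it is essentially the argument behind the result as it appears in the literature: the paper itself does not prove this theorem but cites Miniowitz's Lemma 2, whose proof rests on exactly this degree-theoretic stability argument (discreteness gives an isolated preimage of $a$, sense-preservation gives positive local index, and homotopy invariance of the degree under locally uniform convergence forces $f_m$ to take the value $a$, a contradiction). The supporting facts you invoke --- that non-constant quasiregular maps are discrete, open and sense-preserving with $i(x,f)\geq 1$, and that the degree over a ball containing a single preimage equals that local index --- are all available in Rickman's monograph, so there is no gap.
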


There are various results in the literature describing when a family of quasiregular mappings is normal. The following version of Montel's Theorem for quasiregular mappings has found many applications.

\begin{theorem}[\cite{Min}, Theorem 4]
\label{thm:montel}
Let $n\geq 2$, $K\geq 1$ and $U \subset S^n$ be a domain. Then there exists a constant $q=q(n,K)$ such that if $\mathcal{F}\subset \mathcal{Q}_K(U,S^n)$ is a family of $K$-quasiregular mappings which all omit the same $q$ distinct points $a_1,\ldots, a_q$ in $S^n$, then $\mathcal{F}$ is a normal family.
\end{theorem}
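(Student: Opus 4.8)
The plan is to argue by contradiction, extracting from a hypothetically non-normal family a non-constant entire quasiregular limit that omits $q$ values, and then to contradict Rickman's Picard theorem. Recall that theorem: for each $n\geq 2$ and $K\geq 1$ there is an integer $q=q(n,K)$ such that every $K$-quasiregular mapping $g:\R^n \to S^n$ omitting $q$ distinct values is constant (see \cite{Rickman}). I would take precisely this $q$ as the constant in the statement.

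First I would reduce non-normality to a failure of equicontinuity. By Definition~\ref{def:qrnormal}, the family $\mathcal{F}$ is normal if and only if it is relatively compact in $C(U,S^n)$. Since $S^n$ is compact, every orbit $\mathcal{F}(x)$ is automatically relatively compact in $S^n$, so by the Arzela-Ascoli Theorem (Theorem~\ref{thm:aa}) the only obstruction to normality is a failure of equicontinuity at some point $x_0\in U$. I assume, for contradiction, that this occurs.

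The heart of the argument is a rescaling (Zalcman-type) lemma: from the failure of equicontinuity at $x_0$ one produces points $x_m \to x_0$, scales $\rho_m \to 0^+$, and maps $f_m \in \mathcal{F}$ such that the rescalings $g_m(\xi) = f_m(x_m + \rho_m \xi)$ converge locally uniformly on $\R^n$, in the spherical metric, to a non-constant $K$-quasiregular mapping $g:\R^n \to S^n$. Here the local H\"older estimate of Theorem~\ref{thm:ric} is what guarantees that, after rescaling, the family $\{g_m\}$ is equicontinuous on compact subsets of $\R^n$, so that a locally uniformly convergent subsequence exists; the closedness of $K$-quasiregular maps under local uniform convergence (\cite[Theorem VI.8.6]{Rickman}) ensures the limit $g$ is again $K$-quasiregular, while a careful normalization of the blow-up forces $g$ to be non-constant. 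Note the domain of $g_m$ exhausts $\R^n$ as $\rho_m \to 0$, so $g$ is genuinely entire.

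To finish, observe that each $g_m$ omits the values $a_1,\dots,a_q$, since $f_m$ does. Because $g$ is non-constant, the quasiregular version of Hurwitz's theorem stated above shows that $g$ omits each $a_i$ as well. Thus $g:\R^n \to S^n \setminus \{a_1,\dots,a_q\}$ is a non-constant $K$-quasiregular mapping, contradicting Rickman's Picard theorem for our choice of $q$. Hence $\mathcal{F}$ is equicontinuous at every point and therefore normal. The main obstacle is establishing the rescaling lemma: the classical one-variable Zalcman trick maximizes a normalized spherical derivative, whereas here one must work with a distortion-adapted quantity and invoke Theorem~\ref{thm:ric} to control moduli of continuity uniformly under rescaling, all while arranging the blow-up so that the limit is neither constant nor degenerate. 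Once this lemma is in hand, the remaining steps follow immediately from Hurwitz's theorem and Rickman's Picard theorem.
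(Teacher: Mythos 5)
This theorem is quoted by the paper from Miniowitz \cite{Min} without proof, so there is no internal argument to compare against; the relevant comparison is with the proof in the cited source. Your proposal is correct and is essentially Miniowitz's original argument: reduce non-normality to a failure of equicontinuity via Theorem~\ref{thm:aa} (orbits are automatically relatively compact since $S^n$ is compact), blow up using the quasiregular Zalcman lemma --- which you need not re-derive, since it is available as Lemma~\ref{lem:zalcman} --- pass the omitted values $a_1,\dots,a_q$ to the non-constant entire limit by the quasiregular Hurwitz theorem, and contradict Rickman's Picard theorem, whose constant $q(n,K)$ is precisely the one in the statement.
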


We remark that the statement of \cite[Theorem 5]{Min} is stronger, in that the omitted points are allowed to vary, but must stay at least a fixed spherical distance apart. 
We also recall Miniowitz's version of Zalcman's Lemma, see \cite[Lemma 1]{Min}.

\begin{lemma}
\label{lem:zalcman}
A family $\mathcal{F}$ of $K$-quasiregular mappings from $\B^n$ to $S^n$ is not normal at $x_0$ if and only if there exist sequences $x_m\to x_0$, $f_m \in \mathcal{F}$ and positive reals $\rho_m \to 0^+$ such that
\[ f_m(x_m + \rho_mw) \to g(w)\]
locally uniformly on compact subsets of $\R^n$, where $g$ is a non-constant quasiregular mapping $g:\R^n \to S^n$.
\end{lemma}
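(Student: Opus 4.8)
The plan is to prove the two implications separately. The reverse implication (a rescaling forces non-normality) is short and geometric, while the forward implication is a quasiregular version of Zalcman's rescaling argument and carries essentially all the difficulty.

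For the easy direction I would argue by contradiction: suppose the sequences $x_m \to x_0$, $f_m \in \mathcal{F}$, $\rho_m \to 0^+$ exist with $f_m(x_m + \rho_m w) \to g(w)$ locally uniformly and yet $\mathcal{F}$ is normal at $x_0$. Since the range $S^n$ is compact, normality at $x_0$ is equivalent by Theorem~\ref{thm:aa} to equicontinuity of $\mathcal{F}$ at $x_0$ in the spherical metric $\sigma$. Fixing $w\in\R^n$ and $\epsilon>0$ and taking the equicontinuity radius $\delta>0$, for all large $m$ both $x_m$ and $x_m+\rho_m w$ lie within $\delta$ of $x_0$, so $\sigma(f_m(x_m+\rho_m w),f_m(x_m))<2\epsilon$; letting $m\to\infty$ gives $\sigma(g(w),g(0))\le 2\epsilon$, and since $\epsilon$ and $w$ are arbitrary, $g$ is constant, a contradiction.

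For the hard direction I would assume $\mathcal{F}$ is not normal at $x_0$, translate so that $x_0=0$, and fix a closed ball $\overline{B}(0,R)\subset\B^n$ on which $\mathcal{F}$ fails to be equicontinuous. Each $f\in\mathcal{F}$ lies in $W^1_{n,loc}$, so the spherical derivative $f^{\#}(x)=2|f'(x)|/(1+|f(x)|^2)$ is defined almost everywhere, and a Marty-type normality criterion for $K$-quasiregular maps (as in Miniowitz and Rickman) translates the failure of equicontinuity into the unboundedness of $f^{\#}$ over the family. I would then run the Zalcman maximization: for each $m$ choose $f_m\in\mathcal{F}$ and $x_m\in\overline{B}(0,R)$ maximizing $(R-|x|)f_m^{\#}(x)$, note that non-normality forces this maximum to tend to infinity, set $\rho_m=1/f_m^{\#}(x_m)\to 0^+$, and define $g_m(w)=f_m(x_m+\rho_m w)$. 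The maximizing property yields both the normalization $g_m^{\#}(0)=1$ and a uniform bound on $g_m^{\#}$ on each compact subset of $\R^n$ (since $x_m+\rho_m w$ stays well inside $\overline{B}(0,R)$ for large $m$), so the rescaled family is equicontinuous; Theorem~\ref{thm:aa} then gives a locally uniformly convergent subsequence $g_m\to g$, with $g\colon\R^n\to S^n$ a $K$-quasiregular map by \cite[Theorem VI.8.6]{Rickman}, and $g$ non-constant because the normalization survives in the limit to give $g^{\#}(0)=1\ne 0$.

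The hard part will be the adaptation of the spherical-derivative machinery to quasiregular maps: unlike the holomorphic case, $f^{\#}$ is only defined almost everywhere and is not continuous, so both the Marty-type equivalence and the selection of a genuine maximizer in the Zalcman step need care. I expect to handle this by replacing $f^{\#}$ with a continuous majorant built from the spherical oscillations $\sigma\text{-diam}\,f(\overline{B}(x,r))$ and controlling it through the local H\"older bound of Theorem~\ref{thm:ric}. A secondary technical point is confirming that the normalization passes to the limit to force non-constancy of $g$; this relies on the fact that locally uniform convergence of $K$-quasiregular mappings also controls the limiting derivative, which is exactly the content of \cite[Theorem VI.8.6]{Rickman}.
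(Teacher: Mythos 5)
Your reverse implication is correct and is the standard argument. The forward implication, however, fails at the decisive step: the non-constancy of the limit $g$. (Note also that the paper does not prove this lemma at all; it quotes it from Miniowitz \cite[Lemma 1]{Min}, with the refinement $x_m\to x_0$ referred to \cite[19.7.3]{IM}. The nearest in-paper argument is the proof of Theorem~\ref{thm:yosida}, which follows Miniowitz closely and shows what the correct mechanism is.) You normalize by the pointwise spherical derivative, $g_m^{\#}(0)=1$, and claim this survives in the limit by \cite[Theorem VI.8.6]{Rickman}. That theorem only says that a locally uniform limit of $K$-quasiregular maps is $K$-quasiregular; it gives no control whatsoever on derivatives at a point, and indeed pointwise derivative normalization is \emph{not} preserved under locally uniform convergence of quasiregular maps. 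Concretely, with $\alpha=K^{1/(1-n)}$, let $h_m(w)=w$ for $|w|\le 1/m$ and $h_m(w)=m^{\alpha-1}w|w|^{\alpha-1}$ for $|w|\ge 1/m$. Each $h_m$ is $K$-quasiconformal on $\R^n$ and $h_m^{\#}(0)=2$ for every $m$, yet $|h_m(w)|=m^{\alpha-1}|w|^{\alpha}\to 0$ locally uniformly, so the limit is constant. Thus a sequence of rescalings satisfying exactly your normalization and your local bounds can converge to a constant, and nothing in your argument rules this out. This is precisely where the quasiregular case differs from the holomorphic one, where Weierstrass' theorem gives $g^{\#}(0)=\lim g_m^{\#}(0)=1$.

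The repair, due to Miniowitz and mirrored in the paper's proof of Theorem~\ref{thm:yosida}, is to normalize with a two-point H\"older quotient rather than a one-point derivative. By Theorem~\ref{thm:qr}, non-normality produces $f_m$ and points with $\sigma(f_m(x_m^*),f_m(y_m^*))/|x_m^*-y_m^*|^{\alpha}\to\infty$; one then near-maximizes $\left(1-|x|^2/r^2\right)\sigma(f_m(x),f_m(y))/|x-y|^{\alpha}$ over pairs, sets $\rho_m^{\alpha}=|x_m-y_m|^{\alpha}/\sigma(f_m(x_m),f_m(y_m))$, and obtains rescalings $g_m$ together with bounded points $\xi_m$ satisfying $\sigma(g_m(0),g_m(\xi_m))=|\xi_m|^{\alpha}$. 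Even when $\xi_m\to 0$ this does not degenerate: combining that identity with the H\"older bound of Theorem~\ref{thm:ric} applied to $U_m\circ g_m$ (with $U_m$ spherical isometries) yields the uniform lower bound $\diam \left(U_m\circ g_m\right)(B(0,a))\ge a^{\alpha}/2\lambda(n)$, and a lower bound on image diameters \emph{does} pass to a locally uniform limit, forcing $g$ to be non-constant. This two-point device is the missing idea; your proposed "continuous majorant built from spherical oscillations" is headed in this direction but is left unspecified exactly where the difficulty lies. Two smaller points: the "Marty-type criterion" you cite does not appear in Miniowitz or Rickman (though the one direction you need--essentially bounded spherical derivative plus $W^1_{n,loc}$ implies local spherical Lipschitz continuity, hence normality--can be proved), and your construction does not ensure $x_m\to x_0$; for that one must run the argument on shrinking balls around $x_0$ and diagonalize, cf.\ \cite[19.7.3]{IM}.
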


Here, a family $\mathcal{F}$ is said to be \emph{normal at $x_0$} if there exists a neighbourhood $U$ of $x_0$ such that the family of restrictions $\mathcal{F} |_U$ is normal. 
We remark that Miniowitz's statement does not include the requirement that $x_m \to x_0$, but this can easily be achieved, see for example \cite[19.7.3]{IM}.

The main result of this section is the following characterization of relative compactness for families of $K$-quasiregular mappings based on $\omega$-continuity.

\begin{theorem}
\label{thm:qr}
Let $X\subset S^n$ be a domain equipped with distance function $d_X$ arising from a conformal metric and let $\mathcal{F}\subset \mathcal{Q}_K(X,Y)$ be a family of $K$-quasiregular mappings defined on $X$ with image contained in $Y \subset S^n$ equipped with distance function $d_Y$ arising from a complete conformal metric. Then $\mathcal{F}$ is relatively compact in $C(X,Y)$ if and only if
\begin{enumerate}[(i)]
\item $\mathcal{F}$ is locally uniformly $\w$-continuous, with $\w(t) = t^{\alpha}$, $\alpha = K^{1/(1-n)}$ and
\item there exists $x_0 \in X$ such that $\mathcal{F}(x_0) = \{ f(x_0) : f\in \mathcal{F} \}$ is relatively compact in $Y$.
\end{enumerate}
In particular, if $Y$ is $S^n$ with the spherical metric and if $\mathcal{F}\subset \mathcal{Q}_K(X,S^n)$ is a family of $K$-quasiregular mappings, then $\mathcal{F}$ is normal if and only if $\mathcal{F}$ is locally uniformly $\w$-continuous, with $\w(t) = t^{\alpha}$, $\alpha = K^{1/(1-n)}$.
\end{theorem}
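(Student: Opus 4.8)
The plan is to deduce the theorem from two ingredients already in hand: the Arzela--Ascoli characterization of Theorem~\ref{thm:1} and Rickman's local H\"older estimate of Theorem~\ref{thm:ric}. One direction is essentially free: if (i) and (ii) hold, then $\mathcal{F}$ is a locally uniformly $\w$-continuous family into the complete space $(Y,d_Y)$ possessing a relatively compact orbit, so Theorem~\ref{thm:1} gives at once that $\mathcal{F}$ is relatively compact in $C(X,Y)$. The substance of the theorem is the converse, and in particular the claim that relative compactness forces the \emph{prescribed} modulus $\w(t)=t^{\alpha}$ in (i).

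For the converse, condition (ii) is immediate from the Arzela--Ascoli Theorem (Theorem~\ref{thm:aa}), which shows that relative compactness of $\mathcal{F}$ makes every orbit $\mathcal{F}(x)$ relatively compact in $Y$. Condition (i) is the real work. Arzela--Ascoli supplies only equicontinuity, which carries no information about the exponent; to upgrade it to a local uniform $t^{\alpha}$-bound I would invoke $K$-quasiregularity through Theorem~\ref{thm:ric}. The mechanism is that Rickman's constant has the explicit shape $C=\lambda(n)\delta^{-\alpha}r$, depending only on the dimension, the distance $\delta$ from a compact set to the boundary of the domain, and the image diameter $r$; so as soon as these can be controlled uniformly over $\mathcal{F}$, the H\"older constant is uniform and the exponent is exactly $\alpha$.

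Concretely, fix $x_0\in X$ and a Euclidean ball $U$ with $\overline U\subset X$, putting $E=\overline{B(x_0,\epsilon)}\subset U$ and $\delta=\epsilon$. Using that a relatively compact family is equicontinuous on $X$ with relatively compact orbits, together with the fact (already used in the proof of Theorem~\ref{thm:1}) that closed balls of the complete conformal metric $d_Y$ are compact, I would first show that $V:=\bigcup_{f\in\mathcal{F}}f(\overline U)$ is relatively compact in $Y$; by Lemma~\ref{lemma:bilip}, $d_Y$ is then bi-Lipschitz equivalent to the spherical metric on the compact set $\overline V$, with a single constant valid for every $f$. Shrinking $U$ if necessary, uniform equicontinuity on $\overline U$ forces the spherical diameter of $f(\overline U)$ to be uniformly small, so for each $f$ a spherical isometry $R_f$ carries $f(\overline U)$ into a fixed bounded region of $\R^n$; the map $R_f\circ f$ is again $K$-quasiregular and bounded, with Euclidean image diameter bounded independently of $f$. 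Applying Theorem~\ref{thm:ric} to $R_f\circ f$ on $E\subset U$ then yields a Euclidean $t^{\alpha}$-H\"older bound with constant independent of $f$, and unwinding the spherical isometry (which preserves the spherical metric) together with the bi-Lipschitz comparisons on $\overline U$ and $\overline V$ converts this into $d_Y(f(x),f(y))\le L\,(d_X(x,y))^{\alpha}$ on a $d_X$-ball about $x_0$, uniformly in $f$. Since $x_0$ was arbitrary, (i) follows.

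The final assertion is then immediate: taking $Y=S^n$ with the spherical metric, $Y$ is compact, so every orbit is trivially relatively compact and condition (ii) is automatic; moreover relative compactness in $C(X,S^n)$ is precisely normality by Definition~\ref{def:qrnormal}. Hence normality is equivalent to (i) alone. I expect the main obstacle to lie entirely in the uniformity bookkeeping of the previous paragraph: establishing that the union of images $V$ is relatively compact in $Y$, and handling the point at infinity so that Theorem~\ref{thm:ric}, which is phrased for bounded maps into $\R^n$, can be applied with a constant independent of $f\in\mathcal{F}$. Once the uniform image bound is secured, the explicit form of Rickman's constant delivers both the correct exponent and the required uniformity with no further difficulty.
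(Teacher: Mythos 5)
Your proposal is correct and follows essentially the same route as the paper: sufficiency via Theorem~\ref{thm:1}, condition (ii) from Arzela--Ascoli, and condition (i) by applying Theorem~\ref{thm:ric} after showing $\bigcup_{f\in\mathcal{F}}f(\overline{U})$ is relatively compact, using bi-Lipschitz equivalence of $d_Y$ with the spherical metric there, and composing with spherical isometries (in both domain and range, which is how the paper handles the point at infinity) so that Rickman's constant is uniform over $\mathcal{F}$. The uniformity bookkeeping you flag as the main obstacle is exactly what the paper's chain of inequalities carries out, with no additional ideas needed.
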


\begin{proof}
The sufficiency follows from Theorem \ref{thm:1}. 
For the other direction, by Theorem \ref{thm:1} it suffices to prove that if $\mathcal{F}$ is relatively compact in $C(X,Y)$ then $\mathcal{F}$ is locally uniformly $\w$-continuous, with $\w(t) = t^{\alpha}$. Our main tool is Theorem \ref{thm:ric}, but we need mappings on bounded Euclidean domains to be able to apply it. 

To that end, let $x_0 \in X$, and choose $s>0$ such that the closed ball $B=\overline{B_X(x_0, s)}$ is compact. Then, since $B$ is compact and $\mathcal{F}$ is relatively compact in the topology of local uniform convergence on $C(X,Y)$, it is not hard to see that any sequence in $\bigcup_{f\in \mathcal{F}} f(B)$ has a subsequence that converges in $Y$. Hence $\bigcup_{f\in \mathcal{F}} f(B)$ is relatively compact in $Y$. We conclude that $d_Y$ and the spherical distance $\sigma$ are bi-Lipschitz equivalent on $\bigcup_{f\in\mathcal{F}} f(B)$ with, say, isometric distortion $L$.

%Assuming that $\mathcal{F}$ is relatively compact in $C(X,Y)$, the Arzela-Ascoli Theorem, Theorem \ref{thm:aa}, implies that $\mathcal{F}$ is equicontinuous and every orbit is relatively compact. Let $x_0 \in X$. Then $\mathcal{F}(x_0)$ is relatively compact in $Y$ and, by the completeness of $Y$, is totally bounded. The closure $\overline{\mathcal{F}(x_0)}$ is also totally bounded. Suppose there are balls $B_1,\ldots, B_k$ so that
%\[ \overline{\mathcal{F}(x_0)} \subset \bigcup_{i=1}^k B_i\]
%and let 
%\[ \epsilon = d_Y \left ( \overline{\mathcal{F}(x_0)} , \partial \bigcup_{i=1}^k B_i \right ) >0. \]
%By the equicontinuity of $\mathcal{F}$, find $\delta >0$ such that if $B = B_X(x_0,\delta)$, then the image $f(B)$ has diameter at most $\epsilon /4$ for any $f\in \mathcal{F}$. It follows that $\Omega = \bigcup_{f\in \mathcal{F}} f(B)$ is totally bounded since
%\[ \Omega \subset \bigcup_{i=1}^k B_i .\]
%Hence $\Omega$ is relatively compact in $Y$. We conclude that $d_Y$ and the spherical metric $\sigma$ are bi-Lipschitz equivalent on $\bigcup_{f\in\mathcal{F}} f(B)$ with, say, isometric distortion $L$.

Now for $0<r_0<s/2$ set $E = \overline{B_X(x_0,r_0)}$ and $U = B_X(x_0,2r_0)$. Since $\mathcal{F}$ is relatively compact in $C(X,Y)$, the Arzela-Ascoli Theorem implies that $\mathcal{F}$ is equicontinuous on $\overline{U}$. Taking $\epsilon =\pi/2L$, we can choose $r_0$ sufficiently small that 
\[ \diam _Y f(\overline{U}) \leq \epsilon\]
for all $f\in \mathcal{F}$. Hence
\[ \diam_{\sigma} f(\overline{U}) \leq L\epsilon = \pi/2\]
for all $f\in \mathcal{F}$. We also choose $r_0$ small enough that $\diam_{\sigma} \overline{U} \leq \pi/2$.

Note that $d_X$ and $\sigma$ are bi-Lipschitz equivalent on $\overline{U}$ with, say, isometric distortion $L_1$.
Let $A_1:S^n \to S^n$ be a spherical isometry which maps $x_0$ to $0$, and let $A_f:S^n\to S^n$ be a spherical isometry which maps $f(x_0)$ to $0$. Then $A_1(\overline{U})$ and $A_f(f(\overline{U}))$ are contained in the closed unit ball $\overline{\B^n}$ for all $f\in \mathcal{F}$. Recalling \eqref{eq:sigrn}, for $u,v\in\overline{\B^n}$, the Euclidean and spherical distances are related by $|u-v|\le \sigma(u,v)\le 2|u-v|$.
For $f\in\mathcal{F}$ we then obtain, via Theorem~\ref{thm:ric} applied to $A_f\circ f \circ A_1^{-1}$, the domain $A_1(U)$ and compact set $A_1(E)$, the following chain of inequalities for $x,y\in E$:
\begin{align*}
d_Y( f(x),f(y)) &\leq L\sigma (f(x) , f(y) ) \\
&= L \sigma ( A_f(f(x)) , A_f(f(y)) ) \\
&\leq 2L| A_f(f(x)) - A_f(f(y)) | \\
&\leq 2LC|A_1(x)-A_1(y)|^{\alpha}\\
&\leq 2LC \sigma (A_1(x) , A_1(y))^{\alpha}\\
&= 2LC \sigma (x,y)^{\alpha}\\
&\leq 2LCL_1^{\alpha} d_X(x,y)^{\alpha}.
\end{align*}
By Theorem \ref{thm:ric}, $C$ here depends only on $n,K$,  the Euclidean distance $d(A_1(E) , \partial A_1(U) )$ and the Euclidean diameter of $A_f(f(U))$. Since $A_f(f(U))$ is contained in the closed unit ball, this diameter is bounded above by $2$. The rest of the terms here are independent of the choice of $f$ and hence $C$ can be chosen not to depend on $f$. 
Since $x_0$ was arbitrary, we conclude that $\mathcal{F}$ is locally uniformly $w$-continuous on $X$.

The final statement of the theorem follows since if $Y$ is $S^n$ then (ii) always holds and, in this case, normality and relative compactness coincide.
\end{proof}

This characterization of normality of families of $K$-quasiregular mappings using the spherical distance $\sigma$ in the domain and range allows us to recover the following result of Miniowitz.

\begin{theorem}[Theorem 1, \cite{Min}]
Let $\mathcal{F}\subset \mathcal{Q}_K(X,S^n)$ be a family of $K$-quasiregular mappings on a domain $X\subset S^n$ for $n\geq 2$ equipped with the spherical distance $\sigma$.
Then $\mathcal{F}$ is a normal family if and only if for each compact subset $E\subset X$ there exists $L>0$ such that
\[ \sigma (f(x),f(y)) \leq L \w ( \sigma (x,y) ),\]
for all $x,y \in E$ and $f\in \mathcal{F}$,
where $\w(t) = t^{\alpha}$ and $\alpha = K^{1/(1-n)}$.
\end{theorem}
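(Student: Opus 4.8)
The plan is to deduce this statement directly from the final assertion of Theorem~\ref{thm:qr}, using Proposition~\ref{prop:1} to reconcile the two ways of phrasing the $\w$-continuity hypothesis. The key observation is that Miniowitz's displayed condition is precisely uniform $\w$-continuity on compact sets, in the sense of part (ii) of the definition following Definition~\ref{def:w-cts}, with the spherical distance $\sigma$ playing the role of both $d_X$ and $d_Y$ and with $\w(t)=t^\alpha$.

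First I would set up the metric framework so that the earlier results apply. Here the domain $X\subset S^n$ is equipped with the spherical distance $\sigma$, which is obtained by integrating the conformal metric in \eqref{eq:sigrn}, and the range is $Y=S^n$ with the same spherical metric. Since $S^n$ is compact, the spherical distance is complete, so the completeness hypothesis on the range in Theorem~\ref{thm:qr} is satisfied. With these identifications, the family $\mathcal{F}\subset\mathcal{Q}_K(X,S^n)$ meets all the standing assumptions of Theorem~\ref{thm:qr}.

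Next I would invoke the final statement of Theorem~\ref{thm:qr}, which asserts that, when the range is $S^n$ with the spherical metric, $\mathcal{F}$ is normal if and only if $\mathcal{F}$ is locally uniformly $\w$-continuous with $\w(t)=t^\alpha$ and $\alpha=K^{1/(1-n)}$. It remains only to recognise that local uniform $\w$-continuity coincides with the condition displayed in Miniowitz's theorem. This is exactly Proposition~\ref{prop:1}: since $X$ and $Y$ are subdomains of $S^n$ carrying conformal metrics, local uniform $\w$-continuity is equivalent to uniform $\w$-continuity on compact sets, i.e.\ to the existence, for each compact $E\subset X$, of a constant $L>0$ with $\sigma(f(x),f(y))\le L\,\w(\sigma(x,y))$ for all $x,y\in E$ and all $f\in\mathcal{F}$. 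Chaining these two equivalences gives the theorem.

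There is no substantive analytic obstacle here, since all the real work, namely the application of Theorem~\ref{thm:ric} to convert the quasiregular H\"older estimate into local uniform $\w$-continuity, has already been carried out in the proof of Theorem~\ref{thm:qr}. The only point requiring a little care is purely a matter of matching definitions: one must check that the single metric $\sigma$ simultaneously serves as the conformal distance function on the domain $X$ and on the range $S^n$, so that both Proposition~\ref{prop:1} and Theorem~\ref{thm:qr} are genuinely applicable with this common choice. Once that identification is made, the statement is an immediate specialisation.
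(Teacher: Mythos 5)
Your proposal is correct and is essentially the paper's own argument: the paper states this result as an immediate consequence of the final statement of Theorem \ref{thm:qr} (with the spherical metric on both domain and range), and the translation between local uniform $\w$-continuity and the compact-set condition is exactly Proposition \ref{prop:1}. Nothing further is needed.
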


In the case where the domain $X$ is the unit ball with the hyperbolic metric, we have the following result, which is presumably known to the experts.

\begin{theorem}
Let $\mathcal{F}\subset \mathcal{Q}_K(\B^n,S^n)$ be a family of $K$-quasiregular mappings defined on $\B^n$ for $n\geq 2$. Then $\mathcal{F}$ is a normal family if and only if for each compact subset $E\subset \B^n$ there exists $L>0$ such that
\[ \sigma (f(x),f(y)) \leq L \w ( \rho( x,y) ),\]
for all $x,y\in E$ and $f\in \mathcal{F}$,
where $\rho$ denotes the hyperbolic distance on $\B^n$ and $\w(t) = t^{\alpha}$ with $\alpha = K^{1/(1-n)}$.
\end{theorem}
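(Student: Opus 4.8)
The plan is to deduce this directly from Theorem~\ref{thm:qr} together with Proposition~\ref{prop:1}. First I would take the domain to be $X = \B^n$ equipped with the hyperbolic distance $\rho$ and the range to be $S^n$ equipped with the spherical distance $\sigma$. By Example~\ref{ex:1} the hyperbolic metric is a genuine conformal metric on $\B^n$, and the spherical metric on the compact space $S^n$ is complete, so the hypotheses of Theorem~\ref{thm:qr} are met. Since the range is $S^n$, the orbit condition (ii) of Theorem~\ref{thm:qr} holds automatically, and the final clause of that theorem then gives that $\mathcal{F}$ is normal if and only if it is locally uniformly $\w$-continuous with $\w(t) = t^{\alpha}$, where the relevant distance functions are $\rho$ on the domain and $\sigma$ on the range.

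It then remains to recognise that the condition in the statement is precisely uniform $\w$-continuity on compact sets. By Proposition~\ref{prop:1}, applied with $d_X = \rho$ and $d_Y = \sigma$, local uniform $\w$-continuity of $\mathcal{F}$ is equivalent to uniform $\w$-continuity on compact sets, that is, to the requirement that for each compact $E \subset \B^n$ there exist $L>0$ with $\sigma(f(x),f(y)) \leq L\w(\rho(x,y))$ for all $x,y\in E$ and all $f\in\mathcal{F}$. Chaining these two equivalences yields the theorem.

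I do not anticipate any genuine obstacle, since the real content is already contained in Theorem~\ref{thm:qr} and the present statement is merely its specialisation to the pair $(\B^n,\rho)$ and $(S^n,\sigma)$. The only points requiring a moment's care are confirming that this particular choice of metrics satisfies the hypotheses of Theorem~\ref{thm:qr}, and that Proposition~\ref{prop:1}, which is stated for subdomains of $S^n$ carrying conformal metrics and so applies here, correctly converts the local condition into the compact-set condition appearing in the statement. Since $\w(t) = t^{\alpha}$ satisfies $\w(Lt) = L^{\alpha}\w(t)$, one could alternatively note that changing the domain metric from $\sigma$ (as in the previous theorem) to $\rho$ affects the condition only through the constant $L$, because $\rho$ and $\sigma$ are bi-Lipschitz equivalent on compact subsets of $\B^n$ by Lemma~\ref{lemma:bilip}; this explains why replacing the domain metric is harmless.
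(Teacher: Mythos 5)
Your proposal is correct and matches the paper's own route: the paper states this result without separate proof precisely because, as you argue, it is the specialisation of the final clause of Theorem~\ref{thm:qr} to $X=\B^n$ with the hyperbolic metric and $Y=S^n$ with the (complete) spherical metric, with Proposition~\ref{prop:1} translating local uniform $\w$-continuity into the stated compact-set condition. Your closing remark about deducing it instead from the Miniowitz version via Lemma~\ref{lemma:bilip} and the homogeneity $\w(Lt)=L^{\alpha}\w(t)$ is a valid alternative, but it is not needed.
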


In the case where the range is $\R^n$ with the Euclidean metric, by the discussions at the start of this section, a family $\mathcal{F} \subset \mathcal{Q}_K(X,\R^n)$ is normal if and only if every sequence in $\mathcal{F}$ has a subsequence which either converges to an element of $\mathcal{Q}_K(X,\R^n)$ or diverges to infinity. In the normal family literature, a family is called {\it finitely normal} if the second case is ruled out, that is, if $\mathcal{F}$ is relatively compact in $C(X,\R^n)$. Theorem \ref{thm:qr} then yields the following.

\begin{corollary}
\label{thm:qr3}
Let $X\subset S^n$ be a domain equipped with distance function $d_X$ arising from a conformal metric and let $\mathcal{F}\subset \mathcal{Q}_K(X,\R^n)$ be a family of $K$-quasiregular mappings. Then $\mathcal{F}$ is finitely normal if and only if $\mathcal{F}$ is locally uniformly $\w$-continuous, with $\w(t) = t^{\alpha}$, $\alpha = K^{1/(1-n)}$, and $\{ f(x_0) : f\in \mathcal{F} \}$ is bounded for some $x_0 \in X$.
\end{corollary}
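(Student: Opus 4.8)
The plan is to read this off directly from Theorem~\ref{thm:qr} by taking the range to be $Y=\R^n$ with the Euclidean metric, so that essentially all of the work has already been done; the only genuine points to check are that this choice of $Y$ satisfies the hypotheses of Theorem~\ref{thm:qr} and that its two conclusions translate into the asserted statements about finite normality and boundedness.

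First I would recall that, by the definition of finite normality given immediately before the corollary, $\mathcal{F}$ is finitely normal precisely when it is relatively compact in $C(X,\R^n)$, where $\R^n$ carries the Euclidean distance. Thus the target is exactly a statement about relative compactness in $C(X,\R^n)$, which is the conclusion that Theorem~\ref{thm:qr} characterizes.

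The key step, and the one requiring a moment's care, is to verify that $Y=\R^n$ with the Euclidean metric meets the hypothesis of Theorem~\ref{thm:qr} that the conformal metric on $Y$ be complete. The Euclidean distance on $\R^n$ arises from the conformal metric $\tau\equiv 1$, and $(\R^n,|\cdot|)$ is a complete metric space in which closed balls are compact by the Heine--Borel theorem. This is the subtle point: one might expect trouble here, since a normal family of maps into $\R^n$ is classically allowed to have subsequences escaping to infinity, and since the spherical completion of $\R^n$ adds the point at infinity. The resolution is that sequences escaping to infinity are not Cauchy for the Euclidean distance, so $\R^n$ really is complete in the sense required, and the escape-to-infinity phenomenon is precisely what gets excluded by demanding relative compactness in $C(X,\R^n)$ rather than in $C(X,S^n)$. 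Consequently Theorem~\ref{thm:qr} applies verbatim with this choice of $Y$, with no need to pass through the spherical metric or the Escher condition.

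Finally I would translate the two conditions produced by Theorem~\ref{thm:qr}. Condition (i) is already the asserted local uniform $\w$-continuity with $\w(t)=t^{\alpha}$, $\alpha=K^{1/(1-n)}$. For condition (ii), Theorem~\ref{thm:qr} requires some $x_0\in X$ for which $\mathcal{F}(x_0)=\{f(x_0):f\in\mathcal{F}\}$ is relatively compact in $\R^n$; by Heine--Borel a subset of $\R^n$ is relatively compact if and only if it is bounded, so this is exactly the boundedness of $\{f(x_0):f\in\mathcal{F}\}$ for some $x_0$. Combining these gives the claimed equivalence. I expect no serious obstacle beyond the completeness observation above: once $\R^n$ is recognized as a complete conformal model, the corollary is a direct reading of Theorem~\ref{thm:qr}.
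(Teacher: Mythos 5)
Your proposal is correct and is exactly the paper's route: the paper obtains this corollary by applying Theorem~\ref{thm:qr} with $Y=\R^n$ carrying the Euclidean metric, identifying finite normality with relative compactness in $C(X,\R^n)$ and relative compactness of an orbit in $\R^n$ with boundedness. Your explicit verification of the completeness hypothesis (noting that the Euclidean metric is complete on all of $\R^n$, even though it fails to be complete on proper subdomains such as $\B^n$) and the Heine--Borel translation are precisely the details the paper leaves implicit.
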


A recent result of Hinkkanen and Martin \cite[Theorem 1]{HM} states that if $p>0$, $\Omega$ is a plane domain and $\mathcal{F}$ is a family of $K$-quasiregular mappings $f:\Omega \to \C$ which is uniformly bounded in $L^p(\Omega)$, then $\mathcal{F}$ is finitely normal. Combining this with Corollary \ref{thm:qr3}, we obtain the following.

\begin{corollary}
\label{cor:hm}
Let $\mathcal{F}$ be a family of $K$-quasiregular mappings without poles on a plane domain which is uniformly bounded in $L^p$, for some $p>0$. Then $\mathcal{F}$ is locally uniformly $\w$-continuous with respect to the Euclidean metrics in domain and range, with $\w(t) = t^{1/K}$.
\end{corollary}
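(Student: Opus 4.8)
The plan is to combine the two results quoted immediately before the statement. Since $\mathcal{F}$ is a family of $K$-quasiregular mappings on a plane domain $\Omega$ that is uniformly bounded in $L^p(\Omega)$ for some $p>0$, the hypotheses of the Hinkkanen--Martin theorem \cite[Theorem 1]{HM} are met verbatim, so I would first invoke it to conclude that $\mathcal{F}$ is finitely normal.

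Next I would apply Corollary \ref{thm:qr3} with $X=\Omega$. Here $\Omega$ is a subdomain of $\R^2\subset S^2$ equipped with the Euclidean metric, and since the mappings have no poles we have $\mathcal{F}\subset \mathcal{Q}_K(\Omega,\R^2)$; thus the hypotheses of Corollary \ref{thm:qr3} hold. That corollary identifies finite normality with the conjunction of local uniform $\w$-continuity (with $\w(t)=t^{\alpha}$, $\alpha=K^{1/(1-n)}$) and the boundedness of some orbit $\{f(x_0):f\in\mathcal{F}\}$. Extracting the first conjunct from the finite normality just established yields the required local uniform $\w$-continuity. Finally, since we are in the plane, $n=2$, so $\alpha=K^{1/(1-2)}=1/K$ and hence $\w(t)=t^{1/K}$, as claimed.

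I do not expect any genuine obstacle in the combination itself; it is essentially a two-line deduction, and all the real content sits in the two cited results. If one instead wished to argue from first principles, the hard part would be the Hinkkanen--Martin step, namely showing that a uniform $L^p$ bound already forces finite normality; that is where the analytic work lies. By comparison, the passage from finite normality to the H\"older modulus of continuity is precisely the forward implication of Theorem \ref{thm:qr} (packaged as Corollary \ref{thm:qr3}), which ultimately rests on Rickman's local H\"older estimate, Theorem \ref{thm:ric}. The only points requiring care are the routine verifications that the cited hypotheses match (a plane domain is a subdomain of $S^2$ with $n=2$, and ``no poles'' places the family in $\mathcal{Q}_K(\Omega,\R^2)$) and the dimension-dependent evaluation of the exponent $\alpha$.
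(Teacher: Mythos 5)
Your proposal is correct and is exactly the paper's argument: the corollary is stated immediately after the paper notes that the Hinkkanen--Martin theorem yields finite normality, which combined with Corollary \ref{thm:qr3} (extracting the local uniform $\w$-continuity conjunct, with $\alpha = K^{1/(1-2)} = 1/K$) gives the claim. No difference in approach and no gaps.
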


It would be interesting to know if the same result holds in higher dimensions.

\subsection{Quasiregular dynamics}

If $f:S^n \to S^n$ and all its iterates are $K$-quasiregular for some $K\geq 1$, then $f$ is called uniformly quasiregular. Just as in complex dynamics, the iteration theory of uniformly quasiregular mappings can be studied via normality. More precisely, the Fatou set $F(f)$ is the set of stable behaviour where the family of iterates locally forms a normal family and the Julia set $J(f)$ is the set of chaotic behaviour where the family of iterates does not locally form a normal family. The notion of normal family here is the usual version of Miniowitz with respect to $S^n$, but as we have seen, this is equivalent to our notion of normality, compare with Definition \ref{def:qrnormal}.

For $r>0$, let
\[L(x,f,r) = \sup_{\sigma (y,x)=r} \sigma ( f(y),f(x)).\]
Applying the final statement of Theorem \ref{thm:qr} leads to the following characterization of the Julia set of a uniformly quasiregular mapping.

\begin{theorem}
\label{thm:julia}
Let $f:S^n \to S^n$ be uniformly $K$-quasiregular. Then $x_0\in J(f)$ if and only if there exist sequences $r_k \to 0$ and $m_k \to \infty$ such that
\begin{equation}
\label{eq:julia} 
\lim_{k\to \infty} \frac{L(x_0,f^{m_k},r_k)}{r_k^{\alpha}} = \infty,
\end{equation}
where $\alpha = K^{1/(1-n)}$.
\end{theorem}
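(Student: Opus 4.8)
The plan is to translate membership in the Julia set into the failure of the local uniform H\"older condition supplied by the final statement of Theorem~\ref{thm:qr}, and then to read off the quantity $L(x_0,f^{m},r)$ from that condition. Write $\mathcal F=\{f^{m}:m\in\N\}\subset\mathcal Q_K(S^n,S^n)$ for the family of iterates, so that $x_0\in J(f)$ means precisely that $\mathcal F$ is not normal at $x_0$. Since $S^n$ is compact, the orbit condition in Theorem~\ref{thm:qr} is automatic, so for any ball $B=B_\sigma(x_0,\delta)$ the restricted family $\mathcal F|_{B}$ is normal if and only if it is locally uniformly $\w$-continuous with $\w(t)=t^\alpha$; equivalently, by Proposition~\ref{prop:1}, if and only if there are $\delta',L>0$ with $\sigma(f^{m}(x),f^{m}(y))\le L\,\sigma(x,y)^{\alpha}$ for all $x,y\in B_\sigma(x_0,\delta')$ and all $m$. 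The easy direction of the theorem is then its contrapositive $x_0\in F(f)\Rightarrow$ no sequences as in \eqref{eq:julia}: from this two--point H\"older bound, taking $y$ with $\sigma(y,x_0)=r<\delta'$ and passing to the supremum over such $y$ gives $L(x_0,f^{m},r)\le L r^{\alpha}$ for every $m$ and every $r<\delta'$, so $L(x_0,f^{m_k},r_k)/r_k^{\alpha}\le L$ can never tend to infinity.

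For the reverse implication I would again argue by contraposition: assume no sequences as in \eqref{eq:julia} exist, that is, $\limsup$ of $L(x_0,f^{m},r)/r^{\alpha}$ as $r\to0$, $m\to\infty$ is finite, and deduce $x_0\in F(f)$. Finiteness of this joint limsup yields $M,\epsilon,N$ with $L(x_0,f^{m},r)\le M r^{\alpha}$ for all $r<\epsilon$ and all $m>N$. The finitely many remaining iterates $f^{1},\dots,f^{N}$ are each individually $K$-quasiregular, hence H\"older near $x_0$ by Theorem~\ref{thm:ric}, so after enlarging $M$ and shrinking $\epsilon$ one obtains a single bound $L(x_0,f^{m},r)\le M r^{\alpha}$ valid for all $m\in\N$ and all $r<\epsilon$. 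In particular, for $\rho<\epsilon$ and every $y\in\overline{B_\sigma(x_0,\rho)}$ we get $\sigma(f^{m}(y),f^{m}(x_0))\le M\rho^{\alpha}$, and therefore $\diam_\sigma f^{m}(\overline{B_\sigma(x_0,\rho)})\le 2M\rho^{\alpha}$ uniformly in $m$.

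This uniform bound on the image diameters is the crux, and I would feed it into Theorem~\ref{thm:ric} exactly as in the proof of Theorem~\ref{thm:qr}. Fix $\rho<\epsilon$, use a spherical isometry $A_0$ sending $x_0$ near the origin to present $B_\sigma(x_0,\rho)$ as a bounded Euclidean domain, and use spherical isometries $A_m$ sending $f^{m}(x_0)$ to the origin, so that each $A_m\circ f^{m}\circ A_0^{-1}$ is a $K$-quasiregular map on a fixed bounded Euclidean domain whose image has Euclidean diameter bounded by a constant multiple of $M\rho^{\alpha}$, independently of $m$. Theorem~\ref{thm:ric} then gives a H\"older constant on $\overline{B_\sigma(x_0,\rho/2)}$ depending only on $n$, $K$, $\rho$ and this diameter, hence not on $m$; transferring back through the isometries and comparing $\sigma$ with the Euclidean metric on the unit ball as in \eqref{eq:sigrn} produces $C>0$ with $\sigma(f^{m}(x),f^{m}(y))\le C\,\sigma(x,y)^{\alpha}$ for all $x,y\in\overline{B_\sigma(x_0,\rho/2)}$ and all $m$. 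Thus $\mathcal F$ is locally uniformly $\w$-continuous on $B_\sigma(x_0,\rho/2)$, so the final statement of Theorem~\ref{thm:qr} makes $\mathcal F|_{B_\sigma(x_0,\rho/2)}$ normal and $x_0\in F(f)$, completing the contrapositive.

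The main obstacle I anticipate is precisely this upgrade from the centre-based control $L(x_0,f^{m},r)\le Mr^{\alpha}$ to a genuine two--point H\"older estimate on a neighbourhood with a constant independent of $m$: everything hinges on the image-diameter bound being uniform in $m$, so that the constant $C=\lambda(n)\delta^{-\alpha}r$ in Theorem~\ref{thm:ric} stays bounded as $m$ varies. A secondary point worth handling carefully is the requirement $m_k\to\infty$ in \eqref{eq:julia}; this is automatic once one observes that each fixed iterate $f^{N}$ is H\"older near $x_0$ by Theorem~\ref{thm:ric}, so only the tail $m\to\infty$ can make the ratio blow up, which is exactly why formulating the argument through the joint limsup as $r\to0$, $m\to\infty$ is convenient.
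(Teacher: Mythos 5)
Your proposal is correct, and your easy direction ($x_0\in F(f)\Rightarrow$ no blow-up) is exactly the paper's argument via the final statement of Theorem~\ref{thm:qr}. For the hard direction, however, you take a genuinely different route. The paper does not argue by contraposition at all: it invokes an expansion property of Julia sets of uniformly quasiregular maps (\cite[Lemma 4.2]{Fletcher}), which says that for $x_0\in J(f)$, any $\delta\in(0,2)$ and any $r_k\to 0$ one can find $m_k$ with $L(x_0,f^{m_k},r_k)\ge\delta$; the ratio in \eqref{eq:julia} then blows up because the numerator stays bounded below while $r_k^\alpha\to 0$. Your argument instead negates \eqref{eq:julia} to get a joint bound $L(x_0,f^m,r)\le Mr^\alpha$ for small $r$ and large $m$, extends it to all $m$ using Theorem~\ref{thm:ric} on the finitely many remaining iterates, converts the resulting uniform image-diameter bound into a genuine two-point H\"older estimate with constant independent of $m$ (again via Theorem~\ref{thm:ric}, with spherical isometries normalizing domain and range exactly as in the proof of Theorem~\ref{thm:qr}), and concludes normality at $x_0$ from Theorem~\ref{thm:qr}. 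Both arguments are sound, and the trade-off is clear: the paper's proof is two lines but imports a nontrivial dynamical result, and in fact proves something stronger (the numerator itself, not just the ratio, is bounded away from zero, for \emph{every} sequence $r_k\to 0$); your proof is longer but self-contained within the paper's H\"older/normality machinery, and it isolates a nice self-improvement phenomenon — centre-based H\"older control of the iterates, uniform in $m$, upgrades to full local uniform H\"older control and hence normality. Your handling of the requirement $m_k\to\infty$ through the joint limsup is also cleaner than the paper, which leaves this point implicit.
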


\begin{proof}
Suppose $x_0 \in F(f)$. Then there exists $r_0 >0$ such that the family $\{ f^m |_{B_{\sigma}(x_0,r_0)} \}$ is normal. By the final statement of Theorem \ref{thm:qr}, this means that if $r\leq r_0/2$ there exists $C>0$ such that
\[ L(x_0,f^m,r) \leq Cr^{\alpha}\]
for all $m \in \N$. It is then clear that \eqref{eq:julia} cannot hold for any sequences $r_k \to 0$ and $m_k \to \infty$.
On the other hand, suppose $x_0 \in J(f)$ and let $r_k\to 0$. Fix $\delta \in (0,2)$.  By \cite[Lemma 4.2]{Fletcher}, for each $k$ we can find $m_k \in \N$ such that $L(x_0,f^{m_k},r_k) \geq \delta$. Since $r_k \to 0$, we obtain \eqref{eq:julia}.
\end{proof}

\section{Normal quasiregular mappings}

Lehto and Virtanen \cite{LV} introduced normal meromorphic functions $f:\D \to \overline{\C}$ defined by the property that $\{ f\circ A: A \in G \}$ is a normal family, where $G$ is the group of M\"obius automorphisms of $\D$. Normal meromorphic functions were shown to have many properties in common with bounded analytic functions, with the invariance under M\"obius transformations being a key tool. We refer to \cite{Pommerenke} for more details on normal meromorphic functions. 

With this in mind, we make the following analogous definition.

\begin{definition}
\label{def:isonormal}
Let $n\geq 2$.
Let $X \subset S^n$ be a domain, let $(X,d_X)$ be a metric space arising from a conformal metric and let $G$ be a transitive collection of conformal orientation-preserving isometries of $X$ onto itself.
\begin{enumerate}[(i)]
\item We say that a quasiregular mapping $f:X\to S^n$, is a normal quasiregular mapping into $S^n$ if the family $$\mathcal{F} = \{ f\circ A : A \in G \}\subset \mathcal{Q}_K(X,S^n)$$ is a normal family.
\item We say that a quasiregular mapping $f:X\to \R^n$ is a normal quasiregular mapping into $\R^n$ if the family $$\mathcal{F} = \{ f(A(x)) - f(A(x_0)) : A \in G \}\subset \mathcal{Q}_K(X,\R^n)$$ is normal for some $x_0 \in X$.
\end{enumerate}
\end{definition}

In discussing normal quasiregular mappings, there is always implicitly a metric space $(X,d_X)$ and a transitive collection $G$ of isometries of $X$. 

In (ii), the choice of $x_0$ does not matter. To see this, suppose $x_0,x_1$ are distinct points of $X$ and let $\mathcal{F}_0$ and $\mathcal{F}_1$ be the families with respective base-points $x_0$ and $x_1$. Suppose $\mathcal{F}_0$ is normal. As $\mathcal{F}_0(x_0) = \{ 0 \}$, we see that $\mathcal{F}_0$ is relatively compact in $C(X,\R^n)$. By Theorem~\ref{thm:aa}, $\mathcal{F}_0$ is equicontinuous on $X$ and hence so is $\mathcal{F}_1$. Observe that 
\begin{equation}
\label{eq:x0}
f(A(x)) - f(A(x_1)) =[ f(A(x)) - f(A(x_0)) ] - [f(A(x_0)) - f(A(x_1)) ].
\end{equation}
By Theorem~\ref{thm:aa} again, the orbit $\mathcal{F}_0(x)$ is relatively compact in $\R^n$ (i.e.~it is bounded) for any $x\in X$. By \eqref{eq:x0} it follows that $\mathcal{F}_1$ also has bounded orbits. Hence Theorem~\ref{thm:aa} gives that $\mathcal{F}_1$ is relatively compact in $C(X,\R^n)$ and we conclude that $\mathcal{F}_1$ is normal.

It is important to note that the specific choice of $G$ does not matter. Theorem \ref{thm:balls} below shows that normal quasiregular mappings can be characterized via uniform continuity and this latter condition does not involve $G$. Consequently, while $G$ could be chosen to be the full isometry group of $X$, it does not have to be.
Following the conventions laid down in the existing complex analysis literature, we will apply Definition \ref{def:isonormal} where $X$ is either $\B^n$ with the hyperbolic metric and $G$ the full conformal isometry group of $\B^n$, or where $X$ is $\R^n$ with the Euclidean metric and $G$ is the translation group of $\R^n$ (noting that this is a proper subgroup of the full conformal isometry group of $\R^n$).

There is nothing to be gained by taking $X$ to be $S^n$ with the spherical metric, since if $f$ is non-constant then the image of $f(S^n)$ must be all of $S^n$ (and hence case (ii) in Definition \ref{def:isonormal} yields only constant maps) and the group $G$ of orientation-preserving spherical isometries of $S^n$ is compact. Then $\{f\circ A: A\in G \}$ is automatically normal and so every quasiregular mapping $f:S^n \to S^n$ is normal. We again emphasize that the choice of $G$ is irrelevant, but choosing $G$ to be the full isometry group exhibits the point we wish to make here.

Our first main result in this section shows that a normal quasiregular mapping is uniformly continuous with respect to $d_X$ and the spherical distance or the Euclidean distance, respectively. 
This is a generalization of \cite[Theorem 3.2(a)]{MV}.

\begin{theorem}
\label{thm:balls}
Let $X \subset S^n$ be a domain, let $(X,d_X)$ be a metric space arising from a conformal metric and let $G$ be a transitive collection of conformal orientation-preserving isometries of $X$ onto itself. Then 
\begin{enumerate}[(i)]
\item $f:X \to S^n$ is a normal quasiregular mapping if and only if $f$ is uniformly continuous with respect to $d_X$ and $\sigma$;
\item $f:X\to \R^n$ is a normal quasiregular mapping if and only if $f$ is uniformly continuous with respect to $d_X$ and the Euclidean distance.
\end{enumerate}
\end{theorem}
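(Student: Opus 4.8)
The plan is to prove both equivalences by the same two-step mechanism, treating (i) and (ii) in parallel. The key observations are that each $A \in G$ is a conformal (hence M\"obius) bijection, so $f\circ A$ is again $K$-quasiregular and the orbit family lies in $\mathcal{Q}_K$; that $A$ being a bijective isometry forces $d_X(A(x),A(y)) = d_X(x,y)$ and makes $A^{-1}$ an isometry too; and that normality is characterized by relative compactness (Definition~\ref{def:qrnormal}), to which the Arzela-Ascoli Theorem~\ref{thm:aa} applies. The two halves of each equivalence are: uniform continuity of $f$ transfers to equicontinuity of the whole orbit family because each member is a composition with an isometry, and conversely equicontinuity of the family at a single point upgrades to uniform continuity of $f$ precisely because $G$ acts transitively.

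For the forward direction of (i), I assume $\mathcal{F} = \{f\circ A : A\in G\}$ is normal, hence relatively compact in $C(X,S^n)$, so by Theorem~\ref{thm:aa} it is equicontinuous. Fixing a base point $x_0$ and $\epsilon > 0$, I take $\delta$ from equicontinuity at $x_0$. Given $u,v \in X$ with $d_X(u,v) < \delta$, transitivity supplies $A \in G$ with $A(x_0) = u$; setting $v' = A^{-1}(v)$ and using that $A^{-1}$ is an isometry gives $d_X(x_0,v') = d_X(u,v) < \delta$, whence $\sigma(f(u),f(v)) = \sigma((f\circ A)(x_0),(f\circ A)(v')) < \epsilon$. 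Thus $f$ is uniformly continuous. For the converse, uniform continuity of $f$ together with $d_X(A(x),A(y)) = d_X(x,y)$ shows directly that $\mathcal{F}$ is (uniformly) equicontinuous; since $S^n$ is compact every orbit is automatically relatively compact, so Theorem~\ref{thm:aa} gives relative compactness of $\mathcal{F}$ in $C(X,S^n)$, i.e.\ normality.

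Part (ii) runs in exact parallel with $\sigma$ replaced by the Euclidean distance and with $\mathcal{F} = \{g_A\}$, where $g_A(x) = f(A(x)) - f(A(x_0))$. The one extra ingredient is the passage between normality and relative compactness in $C(X,\R^n)$: because $g_A(x_0) = 0$ for every $A$, the orbit $\mathcal{F}(x_0) = \{0\}$ cannot escape to infinity, so (as in the discussion following Definition~\ref{def:isonormal}) normality of $\mathcal{F}$ is equivalent to relative compactness in $C(X,\R^n)$, which via Theorem~\ref{thm:aa} yields equicontinuity with respect to the Euclidean metric. The forward implication then repeats the transitivity argument verbatim, choosing $A$ with $A(x_0) = v$, setting $u' = A^{-1}(u)$, and using $|g_A(u') - g_A(x_0)| = |f(u)-f(v)|$. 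For the converse I must additionally verify that each orbit $\mathcal{F}(x)$ is bounded: since $|g_A(x)| = |f(A(x)) - f(A(x_0))|$ depends only on the isometry-invariant quantity $d_X(x_0,x)$, a chaining argument along a path in $X$ from $x_0$ to $x$ (subdivided into steps short enough to apply uniform continuity) bounds $|g_A(x)|$ uniformly in $A$, so each orbit is relatively compact in $\R^n$ and Theorem~\ref{thm:aa} applies.

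The main obstacle, and really the only non-formal point, is the transitivity step converting one-point equicontinuity into genuine uniform continuity: it hinges on $A^{-1}$ being an isometry, so that pulling the test pair back through $A$ to the base point preserves its $d_X$-distance, while the relevant family member $f\circ A$ with $A(x_0)$ equal to the desired point is furnished directly by transitivity. A secondary care point, specific to (ii), is the bookkeeping that rules out divergence to infinity using $\mathcal{F}(x_0) = \{0\}$, ensuring we land in $C(X,\R^n)$ rather than merely $C(X,S^n)$ so that the resulting equicontinuity is with respect to the Euclidean rather than the spherical metric.
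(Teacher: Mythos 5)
Your proposal is correct and follows essentially the same route as the paper: Arzela--Ascoli plus the transitivity/isometry pull-back to a base point in the forward directions, and equicontinuity of the orbit family plus the chaining argument to bound orbits (for the $\R^n$ case) in the converses. The only cosmetic difference is that where you invoke raw equicontinuity from Theorem \ref{thm:aa}, the paper instead quotes the quantitative H\"older bound of Theorem \ref{thm:qr} together with Proposition \ref{prop:1}, and it phrases the converse of (i) contrapositively; the substance is the same.
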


\begin{proof}
We first prove $(i)$.
Suppose that $f$ is normal. Then by Theorem~\ref{thm:qr} and Proposition~\ref{prop:1}, given 
a compact set $E \subset X$, find a constant $L_E>0$ so that for every $x,y\in E$ and $A\in G$, we have
\begin{equation}
\label{eq:sig1} 
\sigma (f(A(x)),f(A(y)) ) \leq L_E d_X(x,y)^{\alpha}.
\end{equation}
Next, fix $x_0 \in X$ and find $r_0>0$ so that $B_X(x_0,r_0)$ is relatively compact in $X$. Let $E$ be the compact set $\overline{B_X(x_0,r_0)}$.
Given $\epsilon >0$, choose 
\[
\delta < \min \left \{ \frac{r_0}{2} , \left ( \frac{ \epsilon }{L_E} \right ) ^{1/\alpha} \right \}.
\]
If $x,y \in X$ with $d_X(x,y) < \delta$, find $A\in G$ such that $A(x_0) = x$. Then $y'=A^{-1}(y) \in B_X(x_0,r_0) \subset E$ and by \eqref{eq:sig1} we have
\begin{align*}
\sigma( f(x) , f(y) ) & = \sigma( f(A(x_0)) , f(A(y')) ) \\
&\leq L_E d_X( x_0 , y')^{\alpha} \\
& = L_E d_X( A^{-1}(x) , A^{-1}(y) )^{\alpha} \\
&= L_E d_X( x,y)^{\alpha}\\
&< L_E \delta^{\alpha} \\
&< \epsilon.
\end{align*}
We conclude that $f$ is uniformly continuous.

For the converse, suppose that $f$ is not normal. Hence $\mathcal{F}$ is not relatively compact in $C(X,S^n)$. By the Arzela-Ascoli Theorem, Theorem \ref{thm:aa}, it follows that $\mathcal{F}$ is not equicontinuous. This means there exists $\epsilon >0$ such that for all $m\in \N$ there exist $x_m,y_m \in X$ and $A_m \in G$ such that $d_X(x_m,y_m) < 1/m$ but $\sigma ( f(A_m(x_m)) , f(A_m(y_m)) ) \geq \epsilon$. Since $A_m$ is an isometry, by setting $u_m = A_m(x_m)$ and $v_m=A_m(y_m)$ we have $d_X(u_m,v_m) < 1/m$ but $\sigma ( f(u_m) , f(v_m) ) \geq \epsilon$ for all $m \in \N$. We conclude that $f$ is not uniformly continuous.

We now turn to (ii). If $f:X\to \R^n$ is normal, then the family $\mathcal{F} = \{ f(A(x)) - f(A(x_0)) : A \in G \}$ is relatively compact in $\mathcal{Q}_K(X,\R^n)$ and we proceed as in part (i) via Theorem \ref{thm:qr} and Proposition \ref{prop:1}.

For the converse, suppose that $f:X\to \R^n$ is not normal and so $\mathcal{F}$ is not relatively compact in $\mathcal{Q}_K(X,\R^n)$. Then by the Arzela-Ascoli Theorem, Theorem \ref{thm:aa}, either $\mathcal{F}$ is not equicontinuous, or $\mathcal{F}$ has an unbounded orbit. If $\mathcal{F}$ is not equicontinuous, then the argument given in (i) shows that $f$ is not uniformly continuous, noting that
\[ |(f(A_m(x_m)) - f(A_m(x_0)) ) - (f(A_m(y_m)) - f(A_m(x_0)) ) |  = | f(A_m(x_m))- f(A_m(y_m)) |.\]

Finally, we have to deal with the case where $\mathcal{F}$ has an unbounded orbit. For a contradiction, suppose that $\mathcal{F}$ has an unbounded orbit and that $f$ is uniformly continuous. Find $x_0,u\in X$ and $A_k \in G$ such that
\begin{equation}
\label{eq:sig2}
|f(A_k(u)) - f(A_k(x_0))| \to \infty
\end{equation}
as $k\to \infty$. Given $\epsilon =1$, find $\delta >0$ so that $d_X(x,y) < \delta$ implies $|f(x) - f(y) | <1$. %If we suppose that $d_X(x_0,u) = s>0$, 
We can find finitely many points $x_1,\ldots, x_m$ in $X$ with $x_m=u$ and $d_X(x_{i-1},x_i) < \delta$, for $i=1,\ldots, m$. If we then set $x_i^k = A_k(x_i)$ for $i=0,\ldots, m$ and $k\in \N$, we have $d_X(x_{i-1}^k,x_i^k) <\delta$ for all $k\in \N$ since $A_k$ is an isometry. By the triangle inequality and uniform continuity, we have for all $k\in \N$ that
\begin{align*} 
|f(A_k(u)) - f(A_k(x_0)) | &\leq \sum  _{i=1}^m | f(A_k(x_i)) - f(A_k(x_{i-1})) | \\
&= \sum_{i=1}^m |f(x_i^k) - f(x_{i-1}^k)| <m. % \\
% & < m < \frac{2s}{\delta}.
\end{align*}
This contradicts \eqref{eq:sig2} and so we conclude that $f$ cannot be uniformly continuous. This completes the proof.
\end{proof}

Another case we will say little about is when the domain and range of $f$ are both $\R^n$ with the Euclidean metric. In the case $n=2$ and we restrict to holomorphic functions, normality reduces to the condition $|f'(z)| \leq C$ for some $C>0$. Consequently $f$ must be linear. 
An application of Theorem \ref{thm:balls}(ii) shows that in the quasiregular case, the uniform continuity of $f$ with respect to Euclidean distances in domain and range yields a growth condition of the form $M(r,f) \leq Cr$ for sufficiently large $r$.

One could consider taking $X$ as a proper sub-domain of $\R^n$ with the quasihyperbolic distance. However, typically the only isometries of the quasihyperbolic distance are M\"obius maps, see \cite{Hasto}, which places strong restrictions on $X$. We will not consider other metric spaces here.

\begin{remark}
In light of Theorem \ref{thm:qr}, for a $K$-quasiregular mapping $f:X\to Y$ one could consider the quantities
\[ Q_f(x) = \limsup_{y\to x} \frac{ d_Y(f(x),f(y) ) }{d_X(x,y)^{\alpha} },\]
and 
\[ Q_f = \sup_{x\in X} Q_f(x),\]
where $\alpha = K^{1/(1-n)}$. It is not hard to show that if $f$ is normal (with $Y$ either $S^n$ or $\R^n$) then $Q_f < \infty$. However the condition that $Q_f< \infty$ is not enough to conclude that $f$ is normal. The reason is that a $K$-quasiregular mapping is also a $K_1$-quasiregular mapping for $K_1>K$. The quantity $Q_f$ depends on $\alpha$ and thus on the choice of $K$.  More specifically, if $g(z) = e^{e^z}$ then since $g$ has infinite order, it cannot be normal as a function $\C \to \C\cup \{\infty \}$ (see Section~\ref{sec:Yosida} below). However, $g$ is $2$-quasiregular and with $\alpha = 1/2$, one can show that $Q_g=0$. This gives a counterexample to \cite[Theorem 1.7]{MV}.
\end{remark}

We next characterize normal quasiregular mappings from $X$ into $S^n$ as globally H\"older continuous. 

\begin{theorem}
\label{thm:globalholder}
Let $X \subset S^n$ be a domain, let $(X,d_X)$ be a metric space arising from a conformal metric and let $G$ be a transitive collection of conformal orientation-preserving isometries of $X$ onto itself.
Let $f:X \to S^n$ be a $K$-quasiregular mapping and let $\alpha = K^{1/(1-n)}$. Then $f$ is normal if and only if $f$ is globally $\alpha$-H\"older, that is, there exists $C>0$ so that
\begin{equation}
\label{eq:gh} 
\sigma ( f(x) , f(y) ) \leq C d_X(x,y)^{\alpha}
\end{equation}
for all $x,y\in X$.
\end{theorem}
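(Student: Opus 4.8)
The plan is to prove Theorem \ref{thm:globalholder} by combining the uniform continuity characterization of Theorem \ref{thm:balls}(i) with the local H\"older estimate coming from Theorem \ref{thm:qr}, using the transitivity of $G$ to upgrade local control into a global bound. The global H\"older condition \eqref{eq:gh} trivially implies uniform continuity with respect to $d_X$ and $\sigma$, so that direction gives normality immediately by Theorem \ref{thm:balls}(i). The substance is the forward implication: assuming $f$ is normal, produce a single constant $C$ valid for all $x,y\in X$.

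First I would invoke normality together with Theorem \ref{thm:qr} (applied to $\mathcal{F} = \{f\circ A : A\in G\}$) and Proposition \ref{prop:1} to obtain, for the fixed compact set $E = \overline{B_X(x_0,r_0)}$ around a chosen basepoint $x_0$, a constant $L_E$ so that
\[ \sigma(f(A(x)),f(A(y))) \leq L_E\, d_X(x,y)^{\alpha} \qquad \text{for all } x,y\in E,\ A\in G. \]
Since the elements $A$ are isometries and $G$ is transitive, this is really a \emph{uniform local} H\"older estimate: given any $u,v\in X$ with $d_X(u,v)$ small, pick $A\in G$ with $A(x_0)=u$, set $y'=A^{-1}(v)$, and the displayed inequality transfers to $\sigma(f(u),f(v)) \leq L_E\, d_X(u,v)^{\alpha}$. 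This is exactly the computation already carried out in the proof of Theorem \ref{thm:balls}, so the payoff is a genuinely uniform H\"older bound on all pairs at distance below some fixed $\delta$ (say $\delta = r_0/2$).

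The remaining task is to pass from small scales to all scales. For pairs with $d_X(x,y) \geq \delta$ I would argue as follows: the spherical diameter of the entire range is bounded by $\pi$, so $\sigma(f(x),f(y)) \leq \pi \leq \pi \delta^{-\alpha} d_X(x,y)^{\alpha}$ whenever $d_X(x,y)\geq \delta$. Taking $C = \max\{L_E, \pi\delta^{-\alpha}\}$ then yields \eqref{eq:gh} for all $x,y\in X$ simultaneously. The key structural points making this work are that the target $S^n$ with the spherical metric has finite diameter and that transitivity of $G$ converts the compact-set estimate into a translation-invariant local bound; without the former the large-scale step would fail, which is precisely why the analogous statement over $\R^n$ (Theorem \ref{thm:globalholder2}) must be phrased differently.

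The main obstacle I anticipate is purely bookkeeping rather than conceptual: ensuring that the constant $L_E$ extracted from Theorem \ref{thm:qr} via the compact set $E=\overline{B_X(x_0,r_0)}$ is truly independent of which point of $X$ we translate to, which hinges on the isometry property of the maps $A\in G$ and on $G$ acting transitively. Once that uniformity is in hand, the boundedness of $\mathrm{diam}_\sigma(S^n)$ handles large distances with no difficulty. No delicate estimate from quasiregularity theory beyond the already-cited Theorem \ref{thm:qr} is needed, so I expect the proof to be short.
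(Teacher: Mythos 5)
Your proposal is correct and follows essentially the same route as the paper: the forward direction (normality implies the global bound) is exactly the paper's argument --- extract a uniform local H\"older constant on $E=\overline{B_X(x_0,r_0)}$ from Theorem \ref{thm:qr} and Proposition \ref{prop:1}, transfer it to arbitrary nearby pairs via transitivity and the isometry property of $G$, and handle large separations using $\sigma\leq\pi$, absorbing both cases into one constant. The only (immaterial) difference is in the easy direction, where you route through the uniform-continuity characterization of Theorem \ref{thm:balls}(i) while the paper verifies local uniform $\w$-continuity of $\{f\circ A : A\in G\}$ directly and cites the final statement of Theorem \ref{thm:qr}.
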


This result generalizes \cite[Corollary 13.5]{Vuorinen}, in which $X$ is $\B^n$ with the hyperbolic metric.
Note that if $\beta \leq \alpha$, then this result says that $f$ is $\beta$-H\"older if and only if $f$ is $\alpha$-H\"older. In other words, once we have a H\"older continuous $K$-quasiregular mapping, we can automatically improve the exponent all the way to $K^{1/(1-n)}$.

\begin{proof}
We first assume that $f$ is normal.
Fix $x_0 \in X$ and find $r >0$ so that $B_X(x_0,2r)$ is relatively compact in $X$. Let $E  = \overline{B_X(x_0,r)}$.
By Theorem \ref{thm:qr} and Proposition \ref{prop:1}, find $C>0$ so that
$\sigma(f(A(x)),f(A(y)) ) \leq Cd_X(x,y)^{\alpha}$
for all $x,y \in E$ and all $A\in G$. We can assume that $C\geq \pi/r^{\alpha}$.

Now, if $x,y\in X$ with $d_X(x,y) \geq r$ then \eqref{eq:gh} trivially holds since spherical distances on $S^n$ are bounded above by $\pi $. Otherwise, if $x,y\in X$ with $d_X(x,y) <r$, let $A\in G$ map $x_0$ to $y$. 
Setting $x_1 = A^{-1}(x) \in E$, we have
\begin{align*}
\sigma (f(x),f(y) ) &= \sigma ( f(A(x_1)), f(A(x_0)) ) \\
&\leq Cd_X(x_1,x_0)^{\alpha} \\
&= Cd_X(A(x_1), A(x_0) )^{\alpha} \\
&= Cd_X(x,y)^{\alpha},
\end{align*}
as required.

For the converse, assume that \eqref{eq:gh} holds for all $x,y\in X$. Let $x_0 \in X$, $r>0$ and suppose $x,y\in B_X(x_0,r)$. If $A\in G$, we have
\[ \sigma (f(A(x)) , f(A(y)) ) \leq C d_X(A(x) , A(y) )^{\alpha} = Cd_X(x,y)^{\alpha}.\]
Hence $\{f\circ A : A\in G \}$ is a locally uniformly $\w$-continuous family and hence by the final statement of Theorem \ref{thm:qr}  is normal.
\end{proof}

We next provide a characterization when the range is $\R^n$.

\begin{theorem}
\label{thm:globalholder2}
Let $X \subset S^n$ be a domain, let $(X,d_X)$ be a metric space arising from a conformal metric and let $G$ be a transitive collection of conformal orientation-preserving isometries of $X$ onto itself.
Let $f:X \to \R^n$ be a $K$-quasiregular mapping and let $\alpha = K^{1/(1-n)}$. Then $f$ is normal if and only if there exists $C>0$ so that
\begin{equation}
\label{eq:gh2} 
|f(x) - f(y)| \leq C \max \{ d_X(x,y)^{\alpha} , d_X(x,y) \}
\end{equation}
for all $x,y\in X$.
\end{theorem}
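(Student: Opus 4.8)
The plan is to exploit the fact that $\alpha = K^{1/(1-n)} \le 1$ (since $K\ge 1$ and $n\ge 2$), so that $\max\{d_X(x,y)^\alpha, d_X(x,y)\}$ equals $d_X(x,y)^\alpha$ when $d_X(x,y)\le 1$ and equals $d_X(x,y)$ when $d_X(x,y)\ge 1$. Thus \eqref{eq:gh2} asserts precisely that $f$ is $\alpha$-H\"older on small scales and Lipschitz on large scales, and the two directions of the proof split along these lines. The small-scale estimate will run parallel to the proof of Theorem \ref{thm:globalholder}, while the large-scale (Lipschitz) estimate, which has no counterpart in the bounded range $S^n$, is the genuinely new ingredient.

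For the converse, suppose \eqref{eq:gh2} holds. Then for $d_X(x,y)<1$ we have $|f(x)-f(y)|\le C d_X(x,y)^\alpha$, so given $\epsilon>0$ the choice $\delta<\min\{1,(\epsilon/C)^{1/\alpha}\}$ forces $|f(x)-f(y)|<\epsilon$ whenever $d_X(x,y)<\delta$. Hence $f$ is uniformly continuous with respect to $d_X$ and the Euclidean distance, and Theorem \ref{thm:balls}(ii) immediately gives that $f$ is normal. Note that only the small-scale part of \eqref{eq:gh2} is needed here.

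For the forward direction, assume $f$ is normal. The small-scale bound is obtained as in Theorem \ref{thm:globalholder}: fix $x_0\in X$, choose $r>0$ with $\overline{B_X(x_0,r)}$ relatively compact, and apply Theorem \ref{thm:qr} together with Proposition \ref{prop:1} to the family $\mathcal{F}=\{f\circ A - f(A(x_0)):A\in G\}$, which is relatively compact in $C(X,\R^n)$ because its orbit at $x_0$ is $\{0\}$. On the compact set $E=\overline{B_X(x_0,r)}$ this produces a constant $L$ with $|f(A(x))-f(A(y))|\le L d_X(x,y)^\alpha$ for all $x,y\in E$ and $A\in G$, using that the additive constant $f(A(x_0))$ cancels in the difference. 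Invoking transitivity of $G$ exactly as in Theorem \ref{thm:globalholder} (given $x,y$ with $d_X(x,y)<r$, pick $A\in G$ with $A(x_0)=y$ and set $x_1=A^{-1}(x)\in E$), I conclude $|f(x)-f(y)|\le L d_X(x,y)^\alpha$ whenever $d_X(x,y)<r$.

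The large-scale bound is where the work lies. Since $f$ is normal it is uniformly continuous by Theorem \ref{thm:balls}(ii), so with $\epsilon=1$ there is $\delta>0$ such that $|f(p)-f(q)|<1$ whenever $d_X(p,q)<\delta$. Given arbitrary $x,y\in X$, I would choose a path $\gamma$ joining them whose $d_X$-length is close to $d_X(x,y)$ and subdivide it into about $d_X(x,y)/\delta$ subarcs, each of length less than $\delta$; since the $d_X$-distance between consecutive subdivision points is bounded by the corresponding subarc length, the triangle inequality yields $|f(x)-f(y)|\le d_X(x,y)/\delta + 1$. For $d_X(x,y)\ge 1$ this is at most $(1/\delta+1)d_X(x,y)$, the required Lipschitz bound. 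The main obstacle is precisely the rigour of this chaining: one must verify that a near-geodesic path exists, that its subdivision produces consecutive points within $d_X$-distance $\delta$ (using that arclength dominates $d_X$-distance), and that the number of pieces is controlled by $d_X(x,y)/\delta$. Finally I would merge the two estimates into a single constant: the H\"older bound covers $d_X(x,y)<r$, the Lipschitz bound covers $d_X(x,y)\ge 1$, and the intermediate range $r\le d_X(x,y)\le 1$ is absorbed since there $d_X(x,y)^\alpha\ge r^\alpha$ while $|f(x)-f(y)|\le 1/\delta+1$, so taking $C$ larger than $L$, $1/\delta+1$ and $(1/\delta+1)r^{-\alpha}$ establishes \eqref{eq:gh2}.
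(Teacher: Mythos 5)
Your proposal is correct and takes essentially the same route as the paper: the $\alpha$-H\"older bound on small scales via transitivity of $G$ together with Corollary~\ref{thm:qr3} and Proposition~\ref{prop:1}, followed by a chaining argument along near-geodesic paths (exploiting that $d_X$ is a length metric) to get the Lipschitz bound at large scales. The only cosmetic differences are that the paper chains the H\"older estimate itself on segments of length $r/2$ (which yields the Lipschitz constant $2Lr^{\alpha-1}$ directly and avoids your separate intermediate-range case), and closes the converse by verifying local uniform $\w$-continuity of the translated family for Corollary~\ref{thm:qr3} rather than invoking uniform continuity and Theorem~\ref{thm:balls}(ii); neither difference affects the substance.
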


Growth conditions of the form \eqref{eq:gh2} naturally appear in the theory, see for example \cite[Theorem 12.21]{Vuorinen}. This says that while we cannot ignore the H\"older behaviour on small scales, on large scales a normal quasiregular map into $\R^n$ is Lipschitz.

\begin{proof}
We first assume that $f$ is normal. Let $x_0 \in X$ and choose $0<r<1$ so that $B_X(x_0,2r)$ is relatively compact in $X$. Set $E = \overline{B_X(x_0,r)}$. By Corollary \ref{thm:qr3}, find $L>0$ so that
\begin{equation*}
| f(A(x')) - f(A(y')) | \leq Ld_X(x',y')^{\alpha}
\end{equation*}
for all $x',y' \in E$ and all $A\in G$. 
Now suppose $x,y\in X$ with $d_X(x,y) \leq r$. Find $A \in G$ so that $x_0 = A(x)$ and $y'=A(y) \in E$. Then we have
\begin{align}
\label{eq:globalholder2}
|f(x) - f(y)| &= |f(A^{-1}(x_0)) - f(A^{-1}(y')) | \nonumber \\
& \leq L d_X(x_0 , y')^{\alpha} \nonumber\\
&= L d_X(A(x_0) , A(y))^{\alpha} \nonumber\\
&= L d_X(x,y)^{\alpha}.
\end{align}

Now suppose $x,y \in X$ with $d_X(x,y) >r$. Suppose $m\in \N$ such that 
\begin{equation}
\label{eq:gh3}
d_X(x,y) \in ( mr/2 , (m+1)r/2 ]. 
\end{equation}
Since $d_X(x,y) = \inf_{\gamma} \int_{\gamma} \tau_X(u) |du|$, we can find a sequence of points $z_0,z_1,\ldots, z_m$ with $z_0 = x$, $z_{m}=y$, $d_X(z_i,z_{i+1}) = r/2$ for $i=0,\ldots, m-2$ and $d_X(z_{m-1}, z_m) <r$.
Then by \eqref{eq:globalholder2} and \eqref{eq:gh3}, we have
\begin{align*}
|f(x) - f(y)| & \leq \sum_{i=0}^{m-1} |f(z_i) - f(z_{i+1}) | \\
&\leq \sum_{i=0}^{m-1} Ld_X(z_i,z_{i+1})^{\alpha} \\ 
&= \frac{L(m-1)r^{\alpha}}{2^{\alpha}}+ Ld_X(z_{m-1} , z_{m})^{\alpha}\\
&\leq Lmr^{\alpha} \\
&\leq 2Lr^{\alpha-1} d_X(x,y).
\end{align*}
It follows that \eqref{eq:gh2} holds globally with $C = 2Lr^{\alpha-1} $.

For the converse, suppose that \eqref{eq:gh2} holds for all $x,y\in X$. Let $x_1 \in X$, choose $0<r<1/2$ and suppose $x,y\in B_X(x_1,r)$. If $A\in G$, we have
\[ |f(A(x)) - f(A(y)) | \leq C d_X(A(x) , A(y))^{\alpha} = Cd_X(x,y)^{\alpha}.\]
Thus $\{f(A(x)) - f(A(x_0)) : A\in G \}$ is a locally uniformly $\w$-continuous family and hence by Corollary \ref{thm:qr3}, $f$ is normal.
\end{proof}

\section{Classes of normal quasiregular mappings}

In this section, we investigate three classes of normal quasiregular mappings.

\subsection{Bloch mappings}

A Bloch function is a holomorphic function $f:\D \to \C$ satisfying
\[ \sup _{z\in \D } (1-|z|^2)|f'(z)| < \infty.\]
Bloch functions have been much studied, see for example \cite{Pom2}.
Bloch functions can be characterised as holomorphic functions which are Lipschitz with respect to the hyperbolic distance in the domain and the Euclidean distance in the range, or equivalently, as functions where the family $\mathcal{F} = \{ f(A(x)) - f(A(0)) : A\in G\}$ is normal, where $G$ is the group of M\"obius automorphisms of the disk, see \cite{Po}.
A profitable viewpoint for the study of Bloch functions has been to put a norm on the space of Bloch functions via
\[ ||f|| _{\mathcal{B}} = \sup_{z\in \D} (1-|z|^2) |f'(z)| + |f(0)|.\]
The term $|f(0)|$ here is to be able to distinguish between constant functions and without it we just have a semi-norm.

In our quasiregular setting, a Bloch mapping is a special case of Definition \ref{def:isonormal}(ii) when $X$ is $\B^n$ equipped with the hyperbolic distance, and we take $G$ to be the full conformal isometry group of $\B^n$. We will denote this group by $G$ throughout this section.

\begin{definition}
\label{def:bloch}
Let $n\geq 2$.
A quasiregular mapping $f:\B^n \to \R^n$ is called a {\it Bloch mapping} if the family $\mathcal{F} = \{ f(A(x)) - f(A(0)) : A\in G\}$ is normal.
\end{definition}

It is evident from the discussion above that if $f$ is a holomorphic function on $\D$, then it is a Bloch function if and only if it is a Bloch mapping in the sense of Definition \ref{def:bloch}.
Bloch mappings exist in all dimensions as the following example shows.

\begin{example}
Suppose $f:\B^n \to \R^n$ is bounded, say $f(\B^n) \subset B(0,R)$ for some $R>0$. Then for any $A\in G$, the image $f(A(\B^n)) - f(A(0))$ must be contained in $B(0,2R)$. By Theorem \ref{thm:montel}, we can conclude that the family $\{ f(A(x)) - f(A(0)) : A \in G\}$ is normal. Hence every bounded quasiregular map $f:\B^n \to \R^n$ is a Bloch mapping.
\end{example}

We will give a characterization of Bloch mappings via the Bloch radius. If $f:\B^n \to \R^n$ is $K$-quasiregular then following, for example, \cite{Eremenko}, we define the \emph{Bloch radius} $\mathcal{B}_f$ to be the supremum of all radii $r>0$ such that there exists a domain $U \subset \B^n$ with the property that $f|_U$ is bijective and $f(U)$ is a ball of Euclidean radius $r$. We remark that Rajala \cite[Theorem 1.5]{Rajala} has proved a quantitative version of a result of Eremenko \cite{Eremenko}, namely that if $f$ is as above, then 
\[ \mathcal{B}_f \geq C \operatorname{diam} f(B(0,1/2) ),\]
where the diameter here is in the Euclidean distance, and $C>0$ depends only on $n$ and $K$.

\begin{example}
Let $f: \B^3 \to \mathbb{H}^3_+ = \{ (x_1,x_2, x_3) \in \R^3 : x_3 >0 \}$ be a surjective M\"obius map. Then clearly the Bloch radius of $f$ is infinite.
\end{example}

\begin{theorem}
\label{thm:blocheq}
Let $n\geq 2$ and $K\geq 1$. Let $f:\B^n \to \R^n$ be a $K$-quasiregular mapping. Then $f$ is a Bloch mapping if and only if the Bloch radius $\mathcal{B}_f$ is finite.
\end{theorem}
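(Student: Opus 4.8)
The plan is to read off the equivalence from the characterizations of normal quasiregular mappings in Section~4, using the quantitative Bloch theorem of Rajala \cite{Rajala} for one implication and a quasiconformal distortion estimate for the other. Recall that $f$ is a Bloch mapping precisely when $\mathcal{F}=\{f\circ A-f(A(0)):A\in G\}$ is normal (Definition~\ref{def:bloch}), and that here normality coincides with finite normality since the orbit of $0$ is $\{0\}$; so I may freely invoke Corollary~\ref{thm:qr3} and Theorem~\ref{thm:globalholder2}. Two invariances drive the argument. First, $\mathcal{B}_{f\circ A}=\mathcal{B}_f$ for every $A\in G$, because $A$ is a bijection of $\B^n$ and the defining condition (a domain on which $f$ is injective with image a Euclidean ball) is preserved under precomposition with $A$ and postcomposition with a translation. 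Second, $\sup_{w\in\B^n}\diam f(B_\rho(w,s))<\infty$ whenever $f$ is Bloch and $s$ is fixed, since by Theorem~\ref{thm:globalholder2} any two points of $B_\rho(w,s)$ have images at Euclidean distance at most $C\max\{(2s)^{\alpha},2s\}$.

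For the implication $\mathcal{B}_f<\infty\Rightarrow f$ Bloch, I would apply Rajala's inequality $\mathcal{B}_g\geq C\diam g(B(0,1/2))$ to each $g=f\circ A$. Using $\mathcal{B}_{f\circ A}=\mathcal{B}_f$ and the fact that, as $A$ ranges over the transitive group $G$, the Euclidean ball $B(0,1/2)=B_\rho(0,\log 3)$ is carried onto every hyperbolic ball $B_\rho(A(0),\log 3)$, this bounds $\diam f(B_\rho(w,\log 3))$ by $\mathcal{B}_f/C$ uniformly in $w$. Equivalently, each $f\circ A$ maps the bounded Euclidean domain $B(0,1/2)$ to a set of diameter at most $\mathcal{B}_f/C$, so Theorem~\ref{thm:ric} (with $E=\overline{B(0,1/4)}$) yields $|f(A(x))-f(A(y))|\leq C'|x-y|^{\alpha}$ on $\overline{B(0,1/4)}$ with $C'$ depending only on $n$, $K$ and $\mathcal{B}_f$, hence independent of $A$. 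Converting this to a hyperbolic estimate via Lemma~\ref{lemma:bilip}, and using that $\{f\circ A:A\in G\}$ is invariant under precomposition with $G$, shows that the family is locally uniformly $\omega$-continuous with $\omega(t)=t^{\alpha}$ at every point. Since the orbit of $0$ is bounded, Corollary~\ref{thm:qr3} gives that $\mathcal{F}$ is finitely normal, i.e.\ $f$ is a Bloch mapping.

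For the converse, suppose $f$ is Bloch but, for a contradiction, $\mathcal{B}_f=\infty$. Then there are domains $U_k\subset\B^n$ with $f|_{U_k}$ a bijection onto a ball $B(c_k,r_k)$ with $r_k\to\infty$; let $w_k$ be the preimage of the centre $c_k$ and $p_k$ the preimage of a point at Euclidean distance $r_k/2$ from $c_k$, so $|f(w_k)-f(p_k)|=r_k/2$. The crucial step is to show $\rho(w_k,p_k)\leq s_0=s_0(n,K)$ independently of $k$: granting this, $w_k,p_k\in B_\rho(w_k,s_0)$ forces $\diam f(B_\rho(w_k,s_0))\geq r_k/2\to\infty$, contradicting finiteness of $\sup_w\diam f(B_\rho(w,s_0))$. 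To bound $\rho(w_k,p_k)$ I would pass to quasihyperbolic metrics and use that the inverse branch $g_k=(f|_{U_k})^{-1}\colon B(c_k,r_k)\to U_k$ is $K$-quasiconformal. By the quasihyperbolic distortion theorem for quasiconformal maps (Gehring--Osgood; see \cite{HKV}), $k_{U_k}(w_k,p_k)$ is controlled by a function of $k_{B(c_k,r_k)}(c_k,c_k+\tfrac{r_k}{2}e_1)$ depending only on $n,K$; the latter is scale-invariant, hence a fixed constant. Finally, since $U_k\subset\B^n$ gives $d(x,\partial U_k)\leq 1-|x|$, the hyperbolic density on $\B^n$ is at most twice the quasihyperbolic density of $U_k$, so $\rho(w_k,p_k)\leq 2\,k_{U_k}(w_k,p_k)$ is bounded, as required.

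The main obstacle is exactly this distortion estimate in the converse: one must show that the inverse map $g_k$, which covers a Euclidean ball of radius $r_k\to\infty$, cannot separate the centre and a point halfway to the boundary too far apart in the hyperbolic metric. The delicate point is that $w_k$ may approach $\partial\B^n$, where the hyperbolic metric degenerates, so any naive Euclidean comparison is useless; the resolution is to work in the intrinsic quasihyperbolic metric of $U_k$, where $K$-quasiconformal maps are uniformly continuous with constants depending only on $n$ and $K$, and only then compare back to the hyperbolic metric of $\B^n$ using $U_k\subset\B^n$.
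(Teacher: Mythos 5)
Your proposal is correct, but both implications are proved by a genuinely different mechanism than the paper's. For the direction $\mathcal{B}_f<\infty\Rightarrow f$ Bloch, the paper does not use Rajala's quantitative inequality at all: it invokes Eremenko's qualitative theorem (\cite[Theorem 1(iii)]{Eremenko}) to get equicontinuity of $\{f\circ A : A\in G\}$ in the Euclidean metric, hence uniform continuity of $f$ with respect to $\rho$ and the Euclidean distance, and concludes by Theorem \ref{thm:balls}(ii); your route through Rajala's bound, Theorem \ref{thm:ric} on $B(0,1/2)$, and Corollary \ref{thm:qr3} is sound and has the advantage of being quantitative (an explicit local H\"older constant of order $\mathcal{B}_f$), at the cost of using the stronger input. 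For the converse, both arguments start identically (domains $U_k$ with $f|_{U_k}$ bijective onto balls of radii $r_k\to\infty$, inverse branches), but the paper's contradiction is a soft compactness argument: it rescales the inverse branches to maps $h_m(x)=g_m^{-1}(r_mx)$ of $\B^n$ into $\B^n$, applies Miniowitz's Montel theorem (Theorem \ref{thm:montel}) to extract a locally uniform limit $h_0$ with $h_0(0)=0$, deduces that the preimages $w_m$ of a fixed point $x_0\neq 0$ tend to $0$, and contradicts the lower bound $\rho(w_m,0)\geq(|x_0|/C)^{1/\alpha}$ coming from Corollary \ref{thm:qr3}. You instead bound $\rho(w_k,p_k)$ directly by a constant $s_0(n,K)$ using the Gehring--Osgood quasihyperbolic distortion theorem for the single inverse branch (your comparison $\rho\leq 2k_{U_k}$ on $U_k\subset\B^n$ and the scale-invariance of $k_{B(c_k,r_k)}(c_k,c_k+\tfrac{r_k}{2}e_1)=\log 2$ are both correct), then contradict the uniform diameter bound on hyperbolic balls supplied by Theorem \ref{thm:globalholder2}. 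Your version avoids any subsequence extraction and is fully quantitative, but it imports an external theorem the paper never uses (though it is available in the cited book \cite{HKV}); the paper's version stays entirely within its own toolkit, at the price of a non-constructive normal-families argument.
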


\begin{proof}
First suppose that $\mathcal{B}_f$ is finite. Clearly the Bloch radius is unchanged if we replace $f$ by $f\circ A$ for $A\in G$. Then by \cite[Theorem 1(iii)]{Eremenko}, the family $\{ f\circ A : A\in G\}$ is equicontinuous with respect to the Euclidean distance in domain and range. In particular, if $E\subset \B^n$ is compact then for any $\epsilon >0$, there exists $\delta >0$ so that if $x,y\in E$ with $|x-y| <\delta$, then $|f(A(x)) - f(A(y))|< \epsilon$. Since $\rho(x,y) \geq |x-y|$ and $\rho(A(x),A(y)) = \rho(x,y)$ for all $A\in G$, we conclude that $f$ is uniformly continuous with respect to the hyperbolic and Euclidean distances. Theorem \ref{thm:balls}(ii) implies that $f$ is normal and hence a Bloch mapping.

On the other hand, suppose that $f$ is a Bloch mapping and the Bloch radius is infinite. Then there exist sequences $(x_m)$ in $\B^n$, $r_m\to \infty$ and neighbourhoods $U_m$ of $x_m$ so that $f$ is bijective on $U_m$ and $f(U_m) = B(f(x_m), r_m)$. Let $A_m \in G$ be a M\"obius map sending $0$ to $x_m$ and define 
\[ g_m(x) = f(A_m(x)) - f(A_m(0)).\]
Further, for $x\in \B^n$, set
\[ h_m(x) = g_m^{-1}(r_m x),\]
where we take the branch of the inverse corresponding to $(f|_{U_m})^{-1}$. Since each $h_m$ is a $K$-quasiconformal map from $\B^n $ into $\B^n$, Theorem \ref{thm:montel} implies $\{h_m : m \in \N\}$ is normal and, by passing to a subsequence if necessary, we may assume that $h_m \to h_0$ locally uniformly. Since $h_m(0)=0$ for all $m$, we conclude that $h_0(0)=0$.

Fix $x_0 \neq 0$ in $\R^n$ and, for $m$ large enough so that $r_m > |x_0|$, find $w_m \in A_m^{-1}(U_m)$ so that $g_m(w_m) = x_0$. Moreover, let $u_m \in \B^n$ be such that $h_m(u_m) = w_m$. Since $u_m = x_0 / r_m$ by construction, we have $u_m \to 0$. Since $h_m\to h_0$ uniformly on compact subsets of $\B^n$ and $u_m\to 0$, we find that $h_m(u_m) \to h_0(0)=0$, that is, $w_m \to 0$.

We will now aim for a contradiction. To that end, choose $M\in\N$ large enough so that $w_m \in E= \overline{B_{\rho}(0,1)}$ for $m\geq M$. Since $E$ is compact and $\{g_m : m\in \N \}$ is a finitely normal family of $K$-quasiregular maps, Corollary \ref{thm:qr3} implies the existence of a constant $C>0$ so that
\[ |g_m(x) | \leq C \rho (x,0)^{\alpha}\]
for all $x\in E$. Applying this to $w_m$, and recalling that $g_m(w_m) = x_0$, we obtain
\[ \rho(w_m,0) \geq \left ( \frac{ |x_0|}{C} \right )^{1/\alpha}\]
for $m\geq M$. This contradicts the earlier established fact that $w_m\to 0$.
\end{proof}

An analogue for the Bloch norm in the quasiregular setting is the following.

\begin{definition}
\label{def:blochnorm}
Suppose $f:\B^n \to \R^n$ is a quasiregular Bloch mapping. Then we define
\[ R_f := |f(0)| + \sup_{x\in \B^n} \operatorname{diam} f(B_{\rho}(x,1) ).\]
\end{definition}

Clearly, we cannot use the version of the Bloch norm for holomorphic functions in $\D$ since quasiregular mappings are only differentiable almost everywhere. Therefore, we can look at the largest Euclidean diameter of images of hyperbolic balls of a fixed radius instead. The choice of radius $1$ here is arbitrary. In fact, if $0<s<t$, then any hyperbolic ball in $\B^n$ of radius $t$ can be covered by $C$ balls of radius $s$, where $C$ depends only on $n, s$ and $t$. In particular, whether $R_f$ is finite or infinite is  not changed if $1$ is replaced by any other radius.

\begin{theorem}
\label{lem:blochnorm}
A quasiregular mapping $f:\B^n\to\R^n$ is a Bloch mapping if and only if $R_f < \infty$.
\end{theorem}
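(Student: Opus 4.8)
The plan is to prove the two implications separately, in each case reducing to the characterization of Bloch mappings as the uniformly continuous quasiregular maps (Theorem~\ref{thm:balls}(ii)) together with the global growth estimate of Theorem~\ref{thm:globalholder2}. Note first that $|f(0)|$ is always finite, so $R_f<\infty$ is equivalent to $\sup_{x\in\B^n}\diam f(B_\rho(x,1))<\infty$.

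For the ``only if'' direction, suppose $f$ is a Bloch mapping, hence normal. Theorem~\ref{thm:globalholder2} provides $C>0$ with $|f(u)-f(v)|\le C\max\{\rho(u,v)^{\alpha},\rho(u,v)\}$ for all $u,v\in\B^n$. If $u,v\in B_\rho(x,1)$ then $\rho(u,v)<2$, and since $0<\alpha\le 1$ both $\rho(u,v)^{\alpha}$ and $\rho(u,v)$ are at most $2$; thus $\diam f(B_\rho(x,1))\le 2C$ uniformly in $x$, giving $R_f<\infty$.

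For the ``if'' direction, which is the substantive part, I assume $M:=\sup_{x\in\B^n}\diam f(B_\rho(x,1))<\infty$ and aim to show $f$ is uniformly continuous with respect to $\rho$ and the Euclidean distance; Theorem~\ref{thm:balls}(ii) then yields normality. The key is a H\"older estimate with a single constant valid near every point, obtained by exploiting the homogeneity of hyperbolic space. Fix $x_0\in\B^n$, choose a M\"obius transformation $A$ of $\B^n$ with $A(0)=x_0$, and set $g=f\circ A$, which is again $K$-quasiregular since $A$ is conformal. As $A$ is a $\rho$-isometry, $g(B_\rho(0,2))=f(B_\rho(x_0,2))$. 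A short chaining argument bounds this image: any two points of $B_\rho(x_0,2)$ lie at hyperbolic distance $<4$, so they are joined by five steps of length $<1$, each consecutive pair lying in a common unit ball, whence $\diam g(B_\rho(0,2))\le 5M$ independently of $x_0$. Now recall that $B_\rho(0,1)$ and $B_\rho(0,2)$ are the fixed Euclidean balls $B(0,\tanh\tfrac{1}{2})$ and $B(0,\tanh 1)$. Applying Theorem~\ref{thm:ric} to $g$ on the bounded domain $U=B_\rho(0,2)$ with compact set $E=\overline{B_\rho(0,1)}$, and noting that both $\delta=d(E,\partial U)$ and the image-diameter bound $5M$ are independent of $x_0$, produces a constant $C_0$, not depending on $x_0$, with $|g(x)-g(y)|\le C_0|x-y|^{\alpha}$ for $x,y\in E$. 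Converting the Euclidean distance to the hyperbolic one via the bi-Lipschitz equivalence on the fixed compact set $E$ (Lemma~\ref{lemma:bilip}) and using that $A$ preserves $\rho$, I obtain a constant $C_1$, independent of $x_0$, with $|f(u)-f(v)|\le C_1\rho(u,v)^{\alpha}$ whenever $u,v\in\overline{B_\rho(x_0,1)}$.

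Taking $x_0=u$ then shows $|f(u)-f(v)|\le C_1\rho(u,v)^{\alpha}$ for all $u,v$ with $\rho(u,v)\le 1$, which immediately gives uniform continuity: for $\epsilon>0$ take $\delta=\min\{1,(\epsilon/C_1)^{1/\alpha}\}$. By Theorem~\ref{thm:balls}(ii), $f$ is normal, hence a Bloch mapping. I expect the main obstacle to be ensuring that the constant coming from Theorem~\ref{thm:ric} is genuinely uniform in $x_0$; this is exactly what the passage to $g=f\circ A$ on the fixed domains $B_\rho(0,1)\subset B_\rho(0,2)$ secures, by replacing the moving balls $B_\rho(x_0,\cdot)$ with fixed Euclidean balls while keeping both the image-diameter bound $5M$ and the boundary gap $\delta$ constant.
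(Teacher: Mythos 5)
Your proposal is correct, and the substantive (``if'') direction takes a genuinely different route from the paper. The paper argues by contrapositive: if $f$ is not Bloch, then by Theorem~\ref{thm:blocheq} the Bloch radius is infinite, and it then runs a rescaling argument with inverse branches $h_m = g_m^{-1}(r_m x)$ (quasiconformal self-maps of $\B^n$), Montel's Theorem~\ref{thm:montel}, and Corollary~\ref{thm:qr3} to show $g_m(\overline{B_\rho(0,1)}) \supset \overline{B(0,r_m s)}$ and hence $R_f=\infty$. You instead argue directly: from $\sup_x \diam f(B_\rho(x,1)) = M < \infty$, a chaining argument bounds $\diam f(B_\rho(x_0,2)) \le 5M$ uniformly; renormalizing by a M\"obius isometry reduces everything to the fixed Euclidean balls $B(0,\tanh\frac12)\subset B(0,\tanh 1)$, where Rickman's Theorem~\ref{thm:ric} gives a H\"older constant independent of $x_0$; this yields $|f(u)-f(v)|\le C_1\rho(u,v)^\alpha$ whenever $\rho(u,v)\le 1$, hence uniform continuity, and Theorem~\ref{thm:balls}(ii) gives normality. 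Your route is more elementary --- it bypasses Theorem~\ref{thm:blocheq}, Montel's theorem and the passage to inverse branches entirely --- and it is quantitative, producing an explicit local H\"older constant linear in $M$ (in effect re-deriving the relevant case of Theorem~\ref{thm:globalholder2} from finiteness of $R_f$). What the paper's route buys is economy within its own development: it recycles the rescaling machinery already built for Theorem~\ref{thm:blocheq} and makes the link between $R_f$ and the Bloch radius $\mathcal{B}_f$ explicit. Your ``only if'' direction (citing Theorem~\ref{thm:globalholder2} rather than Corollary~\ref{thm:qr3} plus a M\"obius translation, as the paper does) is a minor, equally valid variant.
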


\begin{proof}
Suppose $f$ is a Bloch mapping. Then since $E = \overline{B_{\rho}(0,1)}$ is compact, by Corollary~\ref{thm:qr3} there exists $C>0$ such that
\begin{equation}
\label{eq:blochnorm}
|f(A(x)) - f(A(y)) | \leq C \rho (x,y)^{\alpha}
\end{equation}
for all $x,y \in E$ and all $A\in G$. Now, if $x_0 \in \B^n$, let $A_0 \in G$ map $0$ to $x_0$. Suppose $u \in B_{\rho}(x_0,1)$ and set $x=A_0^{-1}(u) \in E$. 
Then applying \eqref{eq:blochnorm} with $y=0$ and $A=A_0$, and using the fact that $A_0$ is an isometry with respect to $\rho$, we see that
\begin{align*} 
|f(u)-f(x_0)|  &= |f(A_0(x)) - f(A_0(0)) | \\
& \leq C \rho(x , 0)^{\alpha} \\
&= C\rho( A_0^{-1}(u) , A_0^{-1}(x_0) ) \\
&=  C \rho(u, x_0)^{\alpha} \\
&\leq C
\end{align*}
for all  $u \in B_{\rho}(x_0,1)$. Since $x_0$ was arbitrary, it follows that $R_f<\infty$.

For the converse, suppose $f$ is not a Bloch mapping and so by Theorem \ref{thm:blocheq} the Bloch radius is infinite. As in the second half of the proof of Theorem \ref{thm:blocheq}, this means we can find sequences $(x_m)$ in $\B^n$, $r_m\to \infty$ and neighbourhoods $U_m$ of $x_m$ so that $f$ is bijective on $U_m$ and $f(U_m) = B(f(x_m), r_m)$. Moreover, let $A_m$ be a M\"obius mapping sending $0$ to $x_m$, let $g_m(x) = f(A_m(x)) - f(A_m(0))$ and, for $m$ large enough, let $h_m:\B^n\to\B^n$ be defined by $h_m = g_m^{-1}(r_mx)$.

Let $0<r<1$ and set $E = \overline{B(0,r)}$. Theorem \ref{thm:montel} implies $\{h_m : m \in \N\}$ is finitely normal.  Corollary~\ref{thm:qr3} and Proposition \ref{prop:1} imply that $\{h_m :m\in \N\}$ is uniformly $\w$-continuous on $E$ with respect to Euclidean distance. In particular, since $h_m(0) = 0$ for all $m$, there exists $L>0$ so that
\begin{equation*}
|h_m(x)| \leq L |x|^{\alpha}
\end{equation*}
for all $x\in E$. Find $0<s\leq r$ so that $Ls^{\alpha }<\frac{e-1}{e+1}$. Then
\begin{equation}
h_m(\overline{B(0,s)}) \subset \overline{B(0,Ls^{\alpha})} \subset \overline{B_{\rho}(0,1)}
\end{equation} 
for all $m$. This implies that
\[ g_m( \overline{ B_{\rho}(0,1 ) } )  \supset \overline{B(0,r_ms)} .\]
Since $r_m\to \infty$, we conclude that 
\[ \sup_{x\in \B^n} \diam f( B_{\rho}(x,1 )) = \infty\]
and hence $R_f = \infty$.
\end{proof}

Let us remark that unlike the usual Bloch space of holomorphic Bloch functions, the set of quasiregular Bloch mappings does not form a normed vector space as it is not closed under addition. On the other hand, it is easy to see that $R_f \geq  0$ with equality if and only if $f$ is the constant map $0$.

Our next result shows that we can use $R_f$ to quantify the growth of Bloch mappings.

\begin{theorem}
\label{thmLblochgrowth}
Let $f:\B^n \to \R^n$ be a quasiregular Bloch mapping. Then
\[ M(r,f)  \leq R_f \max  \left \{ 1 , \log \frac{1+r}{1-r} \right \}.\]
\end{theorem}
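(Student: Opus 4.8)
The plan is to control $|f(x)-f(0)|$ for a point $x$ with $|x|=r$ and then combine this with the term $|f(0)|$ coming from the definition of $R_f$. Write $S=\sup_{x\in\B^n}\diam f(B_\rho(x,1))$, so that $R_f=|f(0)|+S$ by Definition \ref{def:blochnorm}; since $f$ is a Bloch mapping, $S<\infty$ by Theorem \ref{lem:blochnorm}. Recalling that the hyperbolic metric on $\B^n$ comes from $\tau(x)=2(1-|x|^2)^{-1}$, the radial segment from $0$ to $x$ is a geodesic and $\rho(0,x)=\log\frac{1+r}{1-r}=:D$. Since $|f(x)|\le |f(0)|+|f(x)-f(0)|$ and $|f(0)|\le |f(0)|\max\{1,D\}$, it suffices to prove the single inequality $|f(x)-f(0)|\le S\max\{1,D\}$; adding $|f(0)|$ and taking the supremum over $|x|=r$ then yields $M(r,f)\le (|f(0)|+S)\max\{1,D\}=R_f\max\{1,D\}$.

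The key observation is that balls of hyperbolic radius $1$ control the image of points up to hyperbolic distance $2$ apart: if $\rho(u,v)\le 2$, let $w$ be the hyperbolic midpoint of the geodesic segment joining $u$ and $v$, so that $\rho(w,u)=\rho(w,v)=\rho(u,v)/2\le 1$ and hence $u,v\in\overline{B_\rho(w,1)}$. By continuity of $f$ we have $f(\overline{B_\rho(w,1)})\subseteq\overline{f(B_\rho(w,1))}$, so $\diam f(\overline{B_\rho(w,1)})\le\diam f(B_\rho(w,1))\le S$, and therefore $|f(u)-f(v)|\le S$.

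Now I subdivide. If $D\le 1$, then $\rho(0,x)\le 2$ and the previous paragraph gives $|f(x)-f(0)|\le S=S\max\{1,D\}$ directly. If $D\ge 1$, the elementary fact that the interval $[D/2,D]$ always contains an integer (take $N=\lfloor D\rfloor$, which satisfies $D/2\le N\le D$ precisely because $D\ge 1$) lets me place points $z_0=0,z_1,\dots,z_N=x$ along the radial geodesic with $\rho(z_i,z_{i+1})=D/N$ for each $i$. Since $N\le D$ gives $D/N\ge 1$ and $N\ge D/2$ gives $D/N\le 2$, each consecutive pair lies at hyperbolic distance at most $2$, so $|f(z_i)-f(z_{i+1})|\le S$ by the key observation. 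Telescoping along the chain, $|f(x)-f(0)|\le\sum_{i=0}^{N-1}|f(z_{i+1})-f(z_i)|\le NS\le DS=S\max\{1,D\}$, which is the desired inequality.

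I expect the main obstacle to be obtaining the sharp constant. A crude subdivision into segments of hyperbolic length at most $1$ requires $\lceil D\rceil$ pieces and only yields a bound of the shape $\lceil D\rceil S$, which overshoots the target when $D$ is close to (or just above) an integer. The improvement comes from the midpoint argument, which upgrades radius-$1$ balls to distance-$2$ control, together with the observation that an integer $N$ with $D/2\le N\le D$ exists for every $D\ge 1$; these two ingredients together are exactly what produces the clean factor $\max\{1,D\}=\max\{1,\log\frac{1+r}{1-r}\}$.
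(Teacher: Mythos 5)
Your proof is correct and follows essentially the same route as the paper's: a telescoping chain along the radial geodesic from $0$ to $x$, with each step controlled by $S=\sup_{x}\diam f(B_\rho(x,1))$ and the number of steps bounded by $\max\{1,\rho(0,x)\}$. The paper phrases the chain as closed radius-$1$ balls touching at single points rather than your midpoint trick with spacing in $[1,2]$, but the mechanism and the resulting constant are identical.
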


\begin{proof}
Let $r>0$ and suppose $|x_r| = r$ with $M(r,f) = |f(x_r)|$. Consider a chain of closed hyperbolic balls $U_1,\ldots, U_m$ each of hyperbolic radius $1$, so that $U_{i} \cap U_{i+1}$ consists of one point, and with $0 \in \partial U_1$ and $x_r \in U_m$. This can be achieved with $m \leq \max \{ 1 , \rho(0,r) \}$ along a radial line segment. If $C = \sup_x \diam f(B_{\rho}(x,1) )$, then
\[ |f(x_r)| \leq |f(0)| + Cm.\]
Since $\rho(0,r) = \log \frac{1+r}{1-r}$, we conclude that
\[ M(r,f) \leq |f(0)| + C\max \left \{ 1 , \log \left ( \frac{1+r}{1-r} \right ) \right \}\leq R_f \max  \left \{ 1 , \log \frac{1+r}{1-r} \right \}\]
as required.
\end{proof}

Recalling that the order of growth of a map $f:\B^n \to \R^n$ is given by 
\[ \mu_f = \limsup_{r\to 1} (n-1) \frac{\log \log M(r,f)}{\log 1/(1-r)},\]
we see that quasiregular Bloch mappings have zero order of growth.

In the literature, the little Bloch space $\mathcal{B}_0$ has been studied for holomorphic functions. There is an analogous definition in this setting.

\begin{definition}
\label{def:littlebloch}
We say that a quasiregular Bloch mapping is a \emph{little Bloch mapping} if
\[ \lim_{|x| \to 1} \operatorname{diam} f(B_{\rho}(x,1) ) = 0.\]
\end{definition}

For example, linear maps on $\B^n$ are clearly little Bloch mappings. There do exist Bloch mappings which are not little Bloch mappings, as is shown by the following natural generalization of the well-known example of the holomorphic Bloch map $\log(1-z)$.

\begin{example}
For simplicity, take $n=3$ and consider a standard Zorich map construction~$\mathcal{Z}$ such as that in \cite[Section 3.1]{FN} from the beam $\Omega = \{(a,b,c) : |a| < \pi/2, |b| < \pi/2 \}$ onto the upper half space $\{(a,b,c) : c> 0 \}$. The key point is that $\mathcal{Z}$ maps slices of the beam at height $c=r$ onto hemispheres in the upper half space of radius $e^r$ centred at the origin. With the given domain and range, $\mathcal{Z}$ is quasiconformal. Define $f:\B^3 \to \R^3$ via the map $f(x)= \mathcal{Z}^{-1}(x+(0,0,1))$.

Since $f(\B^3) \subset \Omega$, it is clear that the Bloch radius $\mathcal{B}_f$ of $f$ is finite. By Theorem \ref{thm:blocheq}, we see that $f$ is indeed a Bloch mapping.

We show that $f$ is not a little Bloch mapping as follows. For $0<r<1$, take 
\[ x=(0,0,-r) \quad \mbox{ and } \quad y=\left(0,0, -\frac{3r+1}{r+3}\right). \]
Then $\rho(x,y) = \log 2$, so $y\in B_\rho(x,1)$. Since the distance from the origin of $x+(0,0,1)$ is $1-r$, the third component of $f(x)$ is $\log(1-r)$. Similarly, $|y+(0,0,1)|= \frac{2-2r}{r+3}$ and so the third component of $f(y)$ is $\log\frac{2-2r}{r+3}$. Hence
\[ |f(x) - f(y)| \ge \left|\log(1-r) - \log\frac{2-2r}{r+3}\right| = \log\frac{r+3}{2}. \]
Therefore $\liminf_{r\to 1} \diam f(B_{\rho} (x,1) ) \ge\log 2>0$ and thus $f$ is not a little Bloch mapping.
\end{example}

As a final remark in this section, holomorphic Bloch functions are studied with respect to their boundary behaviour. It would be interesting to study quasiregular Bloch mappings from this viewpoint too. We refer to \cite{LiVi} for the boundary behaviour of bounded quasiregular mappings in $\B^n$. It would be natural to ask whether the results there can be generalized to Bloch mappings.

\subsection{Normal quasiregular mappings $\B^n \to S^n$}

In this section, we consider normal quasiregular mappings $f:\B^n \to S^n$. This class has already been the subject of study, sometimes under the name normal quasimeromorphic mappings, and we refer to \cite[Chapter 13]{Vuorinen}. 
There, a quasiregular map $f:\B^n \to S^n$ is called normal if it is uniformly continuous with respect to the hyperbolic and spherical distances in the domain and range respectively. By Theorem \ref{thm:balls}, this definition of a normal mapping agrees with Definition \ref{def:isonormal}(i) with $X = \B^n$ equipped with the hyperbolic distance and $G$ the full group of conformal isometries on $\B^n$.

\begin{definition}
A quasiregular map $f:\B^n \to S^n$ is called a \emph{normal quasiregular mapping from $\B^n$ into $S^n$} if the family $\{ f\circ A : A \in G \}$ is normal.
\end{definition}

Here, we prove a result on the order of growth of a normal quasiregular mapping. We refer to the terminology established in section 3.1, see also \cite[Chapter IV]{Rickman}.
Given $f:\B^n \to S^n$ quasiregular, we modify \eqref{eq:afr} so that $A_{f,\sigma}^{\rho}(x_0,r)$ is the normalized volume of the image of a hyperbolic ball $f(\overline{B_{\rho}(x_0,r)})$ with multiplicity taken into account (see \cite[p.80]{Rickman}), that is,
\[ A_{f,\sigma}^{\rho}(x_0,r) = \frac{1}{\omega_n} \int_{ \overline{B_{\rho}(x_0,r)}} \frac{ J_f(x)}{(1+|f(x)|^2)^n} \: dm(x).\]

\begin{lemma}
\label{lem:arho}
If $f:\B^n \to S^n$ is a normal quasiregular mapping, then there exist constants $\delta,C>0$ so that $A_{f,\sigma}^{\rho}(x,\delta ) \leq C$ for every $x\in \B^n$.
\end{lemma}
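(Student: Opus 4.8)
The plan is to reduce the desired uniform bound to a single orbit of the family $\mathcal{F} = \{f\circ A : A\in G\}$ by exploiting the isometric invariance built into the definition of a normal mapping, and then to extract the bound from the compactness of $\mathcal{F}$ together with an upper semicontinuity property of the spherical area integral.

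First I would record the invariance of $A_{f,\sigma}^{\rho}$ under $G$. Given $x\in\B^n$, choose $A\in G$ with $A(0)=x$; since $A$ is an orientation-preserving conformal isometry of the hyperbolic metric we have $J_A>0$ and $A(\overline{B_{\rho}(0,\delta)})=\overline{B_{\rho}(x,\delta)}$. Writing $g=f\circ A$ and using $J_g(y)=J_f(A(y))J_A(y)$, the substitution $z=A(y)$ gives
\[ \omega_n A_{g,\sigma}^{\rho}(0,\delta) = \int_{\overline{B_{\rho}(0,\delta)}} \frac{J_f(A(y))J_A(y)}{(1+|f(A(y))|^2)^n}\,dm(y) = \int_{\overline{B_{\rho}(x,\delta)}} \frac{J_f(z)}{(1+|f(z)|^2)^n}\,dm(z) = \omega_n A_{f,\sigma}^{\rho}(x,\delta). \]
Hence it suffices to produce $\delta,C>0$ with $A_{g,\sigma}^{\rho}(0,\delta)\le C$ for every $g\in\mathcal{F}$. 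Since $f$ is normal, $\mathcal{F}$ is relatively compact in $C(\B^n,S^n)$, so its closure $\overline{\mathcal{F}}$ is a compact metric space each of whose elements is either $K$-quasiregular or constant. On $\overline{\mathcal{F}}$ I would study the functional $\Psi(g)=A_{g,\sigma}^{\rho}(0,\delta)$, which is finite for every $g$ (for quasiregular $g$ one has $J_g\le|g'|^n\in L^1_{loc}$ and $\overline{B_{\rho}(0,\delta)}$ is compact in $\B^n$; for constant $g$ the value is $0$). The lemma then follows once $\Psi$ is shown to be upper semicontinuous, because an upper semicontinuous, everywhere finite function on a compact metric space is bounded above, and that bound is the constant $C$.

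The main obstacle is therefore the upper semicontinuity of $\Psi$: if $g_m\to g$ locally uniformly with all maps $K$-quasiregular, I must show $\limsup_m A_{g_m,\sigma}^{\rho}(0,\delta)\le A_{g,\sigma}^{\rho}(0,\delta)$. It is worth stressing why a cheaper, pointwise argument cannot work: the global H\"older bound of Theorem~\ref{thm:globalholder} controls the spherical diameter of $f(\overline{B_{\rho}(x,\delta)})$ but not the multiplicity, and indeed the radial stretchings $x\mapsto x|x|^{\alpha-1}$ show that $J_f$ genuinely blows up at branch points of a normal map, so the content of the lemma lies in the borderline integrability of $J_f$ rather than in any bound on a spherical derivative. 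The route I would take for the semicontinuity is standard in the theory of mappings of bounded distortion: locally uniform convergence of $K$-quasiregular maps yields uniform $W^{1,n}$ bounds on $\overline{B_{\rho}(0,\delta)}$ via a Caccioppoli-type estimate controlling $\int|g_m'|^n$ by the oscillation of $g_m$, whence $g_m\rightharpoonup g$ weakly in $W^{1,n}$ and the Jacobians converge as measures, $J_{g_m}\,dm\rightharpoonup J_g\,dm$.

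To handle the spherical weight I would post-compose the convergent sequence with a single spherical isometry; this leaves $\Psi$ unchanged, since the integrand is precisely the pullback of the spherical volume element, and it is admissible for the tail of the sequence because $g_m(0)\to g(0)$ and, once $\delta$ is small, equicontinuity forces the images of $\overline{B_{\rho}(0,\delta)}$ into a small spherical ball. After this normalization the images stay bounded away from $\infty$, so the weight $(1+|g_m|^2)^{-n}$ converges uniformly on $\overline{B_{\rho}(0,\delta)}$ to $(1+|g|^2)^{-n}$; multiplying a uniformly convergent continuous weight by a weak-$*$ convergent sequence of nonnegative measures preserves weak-$*$ convergence, and for such limits the mass of a fixed compact set is upper semicontinuous. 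Applying this with the compact set $\overline{B_{\rho}(0,\delta)}$ gives the required inequality. I expect the verification of the weak convergence of the Jacobian measures, namely the borderline $W^{1,n}$ weak continuity of the determinant which is valid here because of the uniform distortion bound, to be the technical heart, and I would cite the corresponding results of Reshetnyak and Rickman rather than reprove them.
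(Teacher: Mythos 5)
Your argument is correct, but it follows a genuinely different route from the paper's. The two proofs share their opening moves: both use the $G$-invariance $A^{\rho}_{f,\sigma}(x,\delta)=A^{\rho}_{f\circ A,\sigma}(0,\delta)$ to reduce to the origin, and both invoke uniform continuity (Theorem \ref{thm:balls}) to fix $\delta$ so that images of hyperbolic $2\delta$-balls have spherical diameter less than $\tfrac12$. They then diverge. The paper stays entirely within value-distribution theory: since $(f\circ M)(B(0,\theta r))$ misses an $(n-1)$-sphere $Y$ of spherical radius $\tfrac12$, one has $\nu_{f\circ M}(\theta r,Y)=0$, and Rickman's inequality \cite[Lemma IV.1.7]{Rickman} immediately yields the explicit bound $A^{\rho}_{f,\sigma}(x,\delta)\le K_I Q(\log 2)^{n-1}/(\log\theta)^{n-1}$, uniform in $x$ with no limiting process at all. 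You instead run a soft compactness argument: relative compactness of $\overline{\mathcal{F}}$ in $C(\B^n,S^n)$ plus upper semicontinuity of $\Psi(g)=A^{\rho}_{g,\sigma}(0,\delta)$, where the semicontinuity rests on weak convergence of Jacobian measures along locally uniformly convergent sequences of $K$-quasiregular maps. That step is the genuinely delicate one, and you are right both to flag it and to propose citing Reshetnyak/Rickman rather than reproving it: at the borderline exponent $n$ weak $W^{1,n}$ convergence alone does \emph{not} give weak continuity of Jacobians (bubbling counterexamples), and it is precisely the uniform distortion bound, via higher integrability of $J_{g_m}$, that rules out concentration and makes the portmanteau estimate over the compact set $\overline{B_{\rho}(0,\delta)}$ legitimate; your normalization by a single spherical isometry (possible exactly because $\delta$ was chosen small) correctly handles the weight and keeps images away from $\infty$. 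One small repair: your finiteness claim ``$J_g\le|g'|^n\in L^1_{loc}$'' is stated for maps without poles, whereas elements of $\overline{\mathcal{F}}$ are quasimeromorphic; near poles one should argue via the same spherical normalization, under which finiteness of the spherical volume over compacta is clear. In terms of trade-offs: the paper's proof is shorter and quantitative, producing a constant depending only on $n$, $K_I$ and the ratio $\theta$ of radii, at the price of invoking the counting-function apparatus of value distribution theory; yours gives no explicit constant and leans on the Sobolev-theoretic convergence theory of quasiregular maps, but needs nothing from value distribution theory and generalizes readily to any functional that is $G$-invariant and upper semicontinuous under local uniform convergence.
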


\begin{proof}
Since $f:\B^n \to S^n$ is normal, $f$ is uniformly continuous with respect to the hyperbolic and spherical distances respectively by Theorem \ref{thm:balls}. Hence there exists $\delta >0$ such that
\[ \sup_{x\in \B^n} \diam_{\sigma} f(B_{\rho}(x,2\delta) ) <\frac{1}{2}.\]
Now let $x\in \B^n$ and find $M\in G$ such that $M(0)=x$. Let $r>0$ and $\theta >1$ be chosen so that $B_{\rho}(0,\delta) = B(0,r)$ and $B_{\rho}(0,2\delta) = B(0,\theta r)$. Then
\[ \diam_{\sigma} (f\circ M)(B(0,\theta r)) = \diam _{\sigma} f(B_{\rho}(x,2\delta)) < \frac{1}{2}.\]
Pick an $(n-1)$-sphere of spherical radius $1/2$ such that $Y \cap (f\circ M)(B(0,\theta r)) = \emptyset$. Then $\nu_{f\circ M} (\theta r, Y) = 0$. By  \cite[Lemma IV.1.7]{Rickman} applied to $f\circ M$, we have
\[ A_{f,\sigma}^{\rho}(x,\delta) = A_{f\circ M,\sigma}^{\rho}(0,\delta) =
A_{f\circ M, \sigma}( r ) \leq \frac{ K_I Q (\log 2)^{n-1} }{(\log \theta )^{n-1} },\]
where $Q>0$ depends only on $n$ and we recall that $K_I$ is the inner dilatation of $f$. Since this holds for all $x\in \B^n$, the lemma follows.
\end{proof}

\begin{theorem}
\label{thm:normalgrowth}
Let $n\geq 2$ and $f:\B^n \to S^n$ be a normal quasiregular mapping. Then the order of growth of $f$ is at most $n-1$.
\end{theorem}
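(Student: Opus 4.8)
The plan is to bound the counting integral $A(r)=A_{f,\sigma}(r)$ directly and then feed the bound into the definition of order. For maps of $\B^n$ the order is
\[ \mu_f = \limsup_{r\to 1^-} \frac{\log A(r)}{\log\frac{1}{1-r}}, \]
so it suffices to produce a constant $C_1$ with $A(r)\le C_1(1-r)^{-(n-1)}$ for all $r$ near $1$. The engine is Lemma~\ref{lem:arho}, which supplies $\delta, C>0$ such that $A_{f,\sigma}^{\rho}(x,\delta)\le C$ for every $x\in\B^n$. Since the integrand $J_f(x)/(1+|f(x)|^2)^n$ is nonnegative, if finitely many hyperbolic balls $B_\rho(x_1,\delta),\dots,B_\rho(x_N,\delta)$ cover $\overline{B(0,r)}$, then subadditivity of the integral (and the matching $1/\omega_n$ normalizations) gives
\[ A(r)\le \sum_{i=1}^N A_{f,\sigma}^{\rho}(x_i,\delta)\le NC. \]
Everything therefore reduces to estimating the number $N=N(r)$ of hyperbolic $\delta$-balls needed to cover $\overline{B(0,r)}$.

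This covering count is the main obstacle, but it is purely a matter of hyperbolic volume comparison. I would first observe that the Euclidean ball $\overline{B(0,r)}$ is exactly the hyperbolic ball $\overline{B_\rho(0,\rho_r)}$ with $\rho_r=\log\frac{1+r}{1-r}$. Choosing a maximal $\delta$-separated subset $\{x_1,\dots,x_N\}$ of $\overline{B_\rho(0,\rho_r)}$, the balls $B_\rho(x_i,\delta)$ cover it by maximality, while the balls $B_\rho(x_i,\delta/2)$ are pairwise disjoint and contained in $B_\rho(0,\rho_r+\delta/2)$. Every hyperbolic ball of radius $\delta/2$ has the same positive hyperbolic volume $v$ by homogeneity under the isometry group, so comparing volumes yields $Nv\le\operatorname{vol}_\rho\!\big(B_\rho(0,\rho_r+\delta/2)\big)$. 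Since the hyperbolic volume of a ball of radius $R$ in $\B^n$ grows like $e^{(n-1)R}$, and $e^{\rho_r}=\frac{1+r}{1-r}$, this gives $N\le C_2\, e^{(n-1)\rho_r}\le C_1'(1-r)^{-(n-1)}$ as $r\to 1^-$.

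Combining the two estimates, $A(r)\le CC_1'(1-r)^{-(n-1)}$ for $r$ near $1$, so that
\[ \frac{\log A(r)}{\log\frac{1}{1-r}} \le \frac{\log(CC_1') + (n-1)\log\frac{1}{1-r}}{\log\frac{1}{1-r}} \longrightarrow n-1 \]
as $r\to 1^-$. Taking the limit superior yields $\mu_f\le n-1$, as required. The only genuinely geometric input is the packing estimate of the previous paragraph, resting on the $e^{(n-1)R}$ volume growth of hyperbolic balls; the remaining ingredients are the nonnegativity of the integrand, Lemma~\ref{lem:arho}, and the elementary logarithmic limit.
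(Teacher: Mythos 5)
Your proof is correct and takes essentially the same approach as the paper's: reduce $A_{f,\sigma}(0,r)$ to the hyperbolic-ball quantity $A_{f,\sigma}^{\rho}(0,\rho_r)$, cover the hyperbolic ball of radius $\rho_r=\log\frac{1+r}{1-r}$ by hyperbolic $\delta$-balls whose number is controlled by the $e^{(n-1)R}$ volume growth, and apply Lemma~\ref{lem:arho} to each ball before taking logarithms. The only difference is that you make explicit the packing argument (maximal $\delta$-separated set, disjoint half-radius balls, volume comparison) that the paper leaves implicit in asserting the covering bound $D_n e^{(n-1)t}$.
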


\begin{proof}
Let $r\in (0,1)$ and $t = \rho(0,r)$. Then $A_{f,\sigma}^{\rho}(0,t) = A_{f,\sigma}(0,r)$. 
By \cite[p.79]{Ratcliffe}, the hyperbolic volume of $B_{\rho}(x,t)$ is
\[ \operatorname{Vol}_{\rho}(B_{\rho}(x,t)) = \omega_{n-1} \int_0^t \sinh ^{n-1} u \: du \sim C_n e^{(n-1)t}\]
as $t\to \infty$, where $C_n$ depends only on $n$. Here $f(t) \sim g(t)$ as $t\to \infty$ means that $f(t)/g(t) \to 1$ as $t\to \infty$.

Hence, given $\delta>0$, the number of balls of hyperbolic radius $\delta$ needed to cover $B_{\rho}(x,t)$ is bounded above by $D_ne^{(n-1)t}$ for large $t$, where $D_n$ depends only on $n$ and $\delta$.
Then, by Lemma~\ref{lem:arho}, and using the constants $\delta$ and $C$ from Lemma \ref{lem:arho}, we have
\[ A_{f,\sigma}(0,r) = A_{f,\sigma}^{\rho}(0,t) \leq CD_n e^{(n-1)t} = CD_n \left ( \frac{1+r}{1-r} \right )^{n-1}.\]
Hence
\[ \mu_f = \limsup_{r\to 1} \frac{ \log A_{f,\sigma}(0,r) }{\log \frac{1}{1-r} } \leq n-1. \qedhere\]
\end{proof}

We observe that \cite[Theorem 13.10]{Vuorinen} gives an order of growth estimate depending on $n$ and $K$ (the constant $\gamma$ in that result) related to normal quasiregular mappings $\B^n\to S^n$ without poles. Theorem \ref{thm:normalgrowth} can be viewed as a variation of this result with the dependence on $K$ dropped, and poles of $f$ permitted.

\subsection{Yosida mappings}\label{sec:Yosida} 

A Yosida function in the plane is a meromorphic function $f:\C \to S^2$ such that the family $\mathcal{F} = \{ f(z+w) : w\in \C \}$ is normal. We make an analogous definition  by using Definition \ref{def:isonormal}(i) with $X = \R^n$ equipped with the Euclidean distance, and $G$ the translation group of $\R^n$.

\begin{definition}
\label{def:yosida}
Let $n\geq 2$. 
A quasiregular mapping $f:\R^n \to S^n$ is called a {\it Yosida mapping} if the family $\mathcal{F} = \{ f(x+a) : a\in \R^n\}$ is normal.
\end{definition}

Using an approach similar to that behind Theorem~\ref{thm:normalgrowth}, we can recover a bound on the order of growth of Yosida mappings contained in \cite[Corollary~2.2]{BH}.

\begin{theorem}
\label{thm:yosidagrowth}
Let $f:\R^n \to S^n$ be a Yosida quasiregular mapping. Then the order of growth of $f$ is at most $n$.
\end{theorem}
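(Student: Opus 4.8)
The plan is to follow the strategy of Theorem~\ref{thm:normalgrowth} almost verbatim, replacing the hyperbolic geometry of $\B^n$ by the Euclidean geometry of $\R^n$ and the isometries $M\in G$ by translations. The two ingredients are a uniform bound, analogous to Lemma~\ref{lem:arho}, on the normalized spherical volume $A_{f,\sigma}(x,\delta)$ of the image of a small ball, valid uniformly in the centre $x$; and a covering argument in which the Euclidean volume growth $r^n$ replaces the hyperbolic growth $e^{(n-1)t}$ that produced the exponent $n-1$ before.

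First I would establish the uniform bound. Since $f$ is a Yosida mapping, Theorem~\ref{thm:balls}(i) tells us that $f$ is uniformly continuous with respect to the Euclidean distance on $\R^n$ and the spherical distance $\sigma$ on $S^n$. Hence there is $\delta>0$ with
\[ \sup_{x\in\R^n} \diam_\sigma f(B(x,2\delta)) < \tfrac12. \]
Fix $x\in\R^n$ and let $T(y)=y+x$; then $f\circ T$ is $K$-quasiregular and, because $T$ is a Euclidean isometry with unit Jacobian, $A_{f,\sigma}(x,\delta)=A_{f\circ T,\sigma}(0,\delta)$. The image $(f\circ T)(B(0,2\delta))$ has spherical diameter less than $\tfrac12$, so there is an $(n-1)$-sphere $Y$ of spherical radius $\tfrac12$ disjoint from it, giving $\nu_{f\circ T}(2\delta,Y)=0$. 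Applying \cite[Lemma IV.1.7]{Rickman} to $f\circ T$ with inner radius $\delta$ and outer radius $2\delta$ then bounds $A_{f\circ T,\sigma}(0,\delta)$ by a constant $C>0$ depending only on $n$ and $K$ (through $K_I$ and the fixed ratio $2$ of the radii), exactly as in the proof of Lemma~\ref{lem:arho}. Thus $A_{f,\sigma}(x,\delta)\le C$ for every $x\in\R^n$.

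Next I would run the covering argument. For $r>0$ the ball $\overline{B(0,r)}$ can be covered by at most $D_n r^n$ Euclidean balls of radius $\delta$, say $\overline{B(x_i,\delta)}$, where $D_n$ depends only on $n$ and $\delta$; this is where the Euclidean volume growth of degree $n$ enters, in contrast to the exponential growth of hyperbolic volume used in Theorem~\ref{thm:normalgrowth}. Since the integrand defining $A_{f,\sigma}$ in \eqref{eq:afr} is nonnegative, the quantity $A_{f,\sigma}(\cdot,\cdot)$ is subadditive over such a cover, so
\[ A(r)=A_{f,\sigma}(0,r)\le \sum_i A_{f,\sigma}(x_i,\delta)\le C D_n r^n \]
for all large $r$. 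Therefore
\[ \mu_f=\limsup_{r\to\infty}\frac{\log A(r)}{\log r}\le \limsup_{r\to\infty}\frac{\log(CD_n r^n)}{\log r}=n, \]
as claimed.

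I expect the uniform bound of the second paragraph to be the only delicate point: one must check that translating the centre to the origin preserves all the relevant quantities (which it does, translations being isometries of the Euclidean metric and, since they have unit Jacobian, of the averaging function as well) and that the constant coming out of \cite[Lemma IV.1.7]{Rickman} is genuinely independent of the base point $x$. Both are immediate from the corresponding verification in Lemma~\ref{lem:arho}, the fixed radius ratio $2$ here playing the role of the constant $\theta$ there. The remaining covering and limsup computations are routine.
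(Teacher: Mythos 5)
Your proposal is correct and follows essentially the same route as the paper's own proof: uniform continuity from Theorem~\ref{thm:balls}, the Lemma~\ref{lem:arho} argument (via \cite[Lemma IV.1.7]{Rickman}) transplanted to the Euclidean setting to get the uniform bound $A_{f,\sigma}(x,\delta)\le C$, and then the covering of $\overline{B(0,r)}$ by $O(r^n)$ balls of radius $\delta$. The paper simply invokes ``the same argument as in Lemma~\ref{lem:arho}'' where you spell out the translation-invariance details explicitly; there is no substantive difference.
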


\begin{proof}
Since $f$ is a Yosida mapping, Theorem \ref{thm:balls} implies that $f$ is uniformly continuous with respect to the Euclidean and spherical distances. Hence there exists $\delta >0$ so that
\[ \sup _{x\in \R^n} \diam _{\sigma} f(B(x,2\delta)) < \frac12.\]
The same argument as in Lemma \ref{lem:arho} (except without needing to amend $A_{f,\sigma}$ to include the hyperbolic distance) implies the existence of $C>0$ such that 
\[ A_{f,\sigma}(x,\delta) \leq C \]
for all $x\in \R^n$. Since there exists a constant $D = D(n,\delta)$ so that we may cover a ball of large radius $r$ by at most $D r^n$ balls of radius $\delta$, we have
\[ A_{f,\sigma} (0,r) \leq CDr^n.\]
Hence
\[ \mu_f = \limsup_{r\to \infty} \frac{\log A_{f,\sigma}(0,r) }{\log r} \leq n. \qedhere\]
\end{proof}

If $f:\C \to \C$ is holomorphic and also a Yosida function, then the order is at most $1$, see \cite[Theorem 3]{CH}. We believe the analogous result is true for entire quasiregular mappings and hence we make the following conjecture.

\begin{conjecture}
Let $f:\R^n \to \R^n$ be an entire quasiregular Yosida mapping. Then the order of $f$ is at most $n-1$.
\end{conjecture}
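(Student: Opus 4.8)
The plan is to reduce the conjecture to a sharp growth estimate for the averaged counting function. Since $f\colon\R^n\to\R^n$ is pole-free, its order is given by $\mu_f=\limsup_{r\to\infty}\frac{\log A_{f,\sigma}(0,r)}{\log r}$, and equivalently by $\mu_f=(n-1)\limsup_{r\to\infty}\frac{\log\log M(r,f)}{\log r}$. Thus proving $\mu_f\le n-1$ is the same as establishing $A_{f,\sigma}(0,r)=O(r^{\,n-1+o(1)})$, or equivalently $\log M(r,f)=O(r^{1+o(1)})$. Theorem~\ref{thm:yosidagrowth} already supplies the weaker bound $A_{f,\sigma}(0,r)=O(r^n)$ for every Yosida mapping into $S^n$, so the entire content of the conjecture is to shave one power of $r$, and the only new hypothesis available to do so is that $f$ omits the value $\infty$.

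I would view this as the quasiregular analogue of the theorem of Clunie and Hayman \cite{CH}, which asserts that a meromorphic function of bounded spherical derivative has characteristic $T(r)=O(r^2)$, improving to $T(r)=O(r)$ precisely when the function is entire. It is worth noting why the covering argument behind Theorem~\ref{thm:yosidagrowth} cannot simply be pushed further: there one charges each of the $O(r^n)$ balls $B(x,\delta)$ the full amount $A_{f,\sigma}(x,\delta)\le C$. This is wasteful wherever $|f|$ is large, since the spherical density $(1+|f|^2)^{-n}$ is then tiny; but the uniform modulus-of-continuity estimate $\diam_\sigma f(B(x,2\delta))<\tfrac12$ is too crude to detect this decay, because a fixed spherical cap of radius $\tfrac12$ centred near $\infty$ still contains points of bounded modulus. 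Capturing the true decay of spherical area in the high-modulus region is therefore not a matter of continuity at all: it is governed by the multiplicity with which $f$ covers neighbourhoods of $\infty$, which is exactly the value-distribution information encoded in the omission of $\infty$.

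Accordingly, the route I would take is through Rickman's version of Nevanlinna theory for quasiregular mappings \cite{Rickman}. Since $f$ omits $\infty$, this value is totally deficient, and I would use the spherical counting functions $\nu(r,s)$, the representation of $A_{f,\sigma}(0,r)$ in terms of the $\nu(r,s)$, and the first and second main theorems to convert the deficiency of $\infty$ into the gain of one power of $r$, much as the absence of poles converts $T(r)=m(r)$ in the classical setting. The normalisation $A_{f,\sigma}(x,\delta)\le C$ established in the proof of Theorem~\ref{thm:yosidagrowth} would enter to control the contribution on the scale $\delta$. The main obstacle, and the reason this is stated only as a conjecture, is that no ready-made quasiregular Clunie--Hayman estimate exists: the quasiregular second main theorem carries error terms, and the defect relation bounds only the total defect by a constant, so extracting the sharp exponent $n-1$ rather than merely $n$ requires a genuinely new and power-exact value-distribution estimate for Yosida mappings omitting $\infty$. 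I expect this value-distribution step to be the crux; once it is in place, the reduction in the first paragraph delivers the bound on the order, and the higher-dimensional Zorich map shows that the exponent $n-1$ would be best possible.
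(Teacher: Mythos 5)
The statement you set out to prove is left as an \emph{open conjecture} in the paper --- there is no proof there to compare against --- and your proposal does not close it either: by your own admission, the crucial step (``a genuinely new and power-exact value-distribution estimate for Yosida mappings omitting $\infty$'') is missing. What you do have is a correct reduction (order at most $n-1$ is equivalent to $A_{f,\sigma}(0,r)=O(r^{\,n-1+o(1)})$, or to $\log M(r,f)=O(r^{1+o(1)})$ since $f$ has no poles), a correct diagnosis of why the covering argument behind Theorem \ref{thm:yosidagrowth} saturates at exponent $n$, and the right classical model (Clunie--Hayman \cite{CH}, which the paper itself cites for the planar case). That is a sensible research plan, but it is not a proof, so the conjecture remains exactly as open as it was.

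One substantive warning about the route you propose. Deficiency of the omitted value $\infty$, fed into Rickman's second main theorem, cannot by itself produce the saving of one power of $r$: defect relations are \emph{ratio} statements, bounding counting functions relative to $A(r)$, and they say nothing about the absolute growth rate of $A(r)$. Already in the plane this is visible --- $e^{z^2}$ omits $\infty$ (so $\infty$ is totally deficient) yet has order $2$; what forces order at most $1$ in Clunie--Hayman is the interaction of the bounded spherical derivative with the absence of poles, proved by a direct length/gradient estimate on $\log|f|$ along paths, not by Nevanlinna deficiency. So the crux you need is not merely ``power-exact value distribution'' but a quasiregular analogue of that direct estimate: roughly, that the uniform bound $A_{f,\sigma}(x,\delta)\le C$ forces $|f|$ to grow at most exponentially in $|x|$ whenever $f$ never takes the value $\infty$. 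Until an estimate of that type is available, your argument yields only the bound of Theorem \ref{thm:yosidagrowth}, namely order at most $n$.
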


Examples of Yosida mappings include Zorich mappings and certain other periodic quasiregular mappings, see for example \cite[Sections~7 and~8]{MS}. The standard Zorich mappings on $\R^n$ have order $n-1$ (and more generally see \cite[Corollary~8.11]{MS}), which would imply the sharpness of the above conjecture.

Note that Theorem \ref{thm:globalholder} states in this setting that a $K$-quasiregular mapping $f:\R^n \to S^n$ is Yosida if and only if it is $\alpha$-H\"older continuous, that is, there exists $C>0$ so that
\begin{equation}
\label{eq:yos} 
\sigma(f(x),f(y)) \leq C|x-y|^{\alpha},
\end{equation}
for all $x,y\in\R^n$,
recalling that $\alpha = K^{1/(1-n)}$.
We will prove the following characterization of Yosida mappings, generalizing \cite[Theorem 1]{Minda} for Yosida functions in the plane and closely following the proof of Miniowitz's version of Zalcman's Lemma \cite[Lemma 1]{Min}.

\begin{theorem}
\label{thm:yosida}
Let $n\geq 2$, $K>1$ and $f:\R^n \to S^n$ be $K$-quasiregular. Then $f$ is not a Yosida mapping if and only if there exist sequences $x_m$ in $\R^n$ and $\rho_m >0$ with $\rho_m \to 0^+$ so that $f(x_m + \rho_mw) \to g(w)$ locally uniformly on $\R^n$, where $g$ is a non-constant Yosida mapping.
\end{theorem}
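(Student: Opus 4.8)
The plan is to combine the characterisations of the previous section with a Pang--Zalcman rescaling argument modelled on Miniowitz's proof of Lemma~\ref{lem:zalcman}. Throughout, recall that by Theorem~\ref{thm:balls} and Theorem~\ref{thm:globalholder}, applied with $X=\R^n$ carrying the Euclidean metric and $G$ the translation group, $f$ is a Yosida mapping if and only if it is uniformly continuous, equivalently if and only if it is globally $\alpha$-H\"older, i.e.\ there is $C>0$ with $\sigma(f(x),f(y))\le C|x-y|^\alpha$ for all $x,y\in\R^n$. For the implication that such a rescaling prevents $f$ from being Yosida, I would argue by contradiction: if the stated sequences exist and $f$ were Yosida, then for fixed $w,w'$ we have $\sigma(f(x_m+\rho_m w),f(x_m+\rho_m w'))\le C\rho_m^\alpha|w-w'|^\alpha\to 0$ because $\rho_m\to 0$ and $\alpha>0$; passing to the local uniform limit gives $\sigma(g(w),g(w'))=0$, so $g$ is constant, contradicting the hypothesis. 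This direction uses only that $g$ is non-constant, not that it is Yosida.

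For the main direction, suppose $f$ is not Yosida, hence not uniformly continuous. Then there exist $\epsilon_0>0$ and points $u_m,v_m\in\R^n$ with $|u_m-v_m|\to 0$ but $\sigma(f(u_m),f(v_m))\ge\epsilon_0$. I would record this quantitatively through the scale function $\lambda(x)=\inf\{r>0:\diam_\sigma f(\overline{B(x,r)})\ge\epsilon_0\}$: the estimate at the pairs $u_m$ shows $\inf_{x\in\R^n}\lambda(x)=0$. Note that Theorem~\ref{thm:ric} guarantees $\lambda(x)>0$ for every individual $x$, so the infimum vanishing forces the relevant centres to escape every compact set, which is exactly the global (rather than infinitesimal) failure of the H\"older bound. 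I would then run a Pang--Zalcman selection mirroring Minda's weight $(1-|z|)f^{\#}(z)$, but phrased through the image-diameter functional $\diam_\sigma f(\overline{B(x,r)})/r^\alpha$ rather than a pointwise spherical derivative, which is unavailable for quasiregular maps at branch points and which, as observed in the remark following Theorem~\ref{thm:balls}, carries no Yosida information for $K>1$ since then the infinitesimal ratio $Q_f$ can vanish. Concretely, on an exhaustion $B(0,R_m)$ with $R_m\to\infty$ I would maximise a boundary-vanishing weight times this relative oscillation, using $\inf_x\lambda(x)=0$ to force the weighted maxima to infinity, thereby selecting centres $x_m$ and radii $\rho_m\to 0^{+}$.

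Granting this selection, the rescaled maps $g_m(w)=f(x_m+\rho_m w)$ should have oscillation bounded below on the unit ball (so $\diam_\sigma g_m(\overline{B(0,1)})\ge c\epsilon_0$) and, crucially, satisfy a H\"older estimate $\sigma(g_m(w),g_m(w'))\le C|w-w'|^\alpha$ with $C$ independent of $m$ \emph{and} of the scale, the vanishing weight upgrading a bound on compacta to one valid at every scale as $R_m\to\infty$. Theorem~\ref{thm:qr} then gives that $\{g_m\}$ is normal, so after passing to a subsequence $g_m\to g$ locally uniformly with $g:\R^n\to S^n$ a $K$-quasiregular map. The lower oscillation bound passes to the limit and forces $g$ to be non-constant, while the scale-uniform H\"older estimate passes to the limit to give $\sigma(g(w),g(w'))\le C|w-w'|^\alpha$ for all $w,w'$; by Theorem~\ref{thm:globalholder} this says precisely that $g$ is a Yosida mapping, completing the construction of the asserted rescaling.

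The hard part will be the selection step, and specifically ensuring the three requirements at once: that $\rho_m\to 0$, that the limit is non-constant, and — the genuinely delicate point — that the H\"older estimate for the $g_m$ is uniform \emph{across all scales} rather than merely on compact subsets, since only a scale-uniform bound makes $g$ globally H\"older and hence Yosida (a local bound would yield only local H\"older continuity, which every quasiregular map already enjoys). Because no pointwise spherical derivative is available, this maximisation must be run with the image-diameter functional, whose only mild semicontinuity and whose degeneracy as $r\to 0$ must be handled with care; keeping the boundary weight tending to $1$ while extracting the maximising centres and scales is exactly the place where Miniowitz's rescaling argument needs the most delicate adaptation.
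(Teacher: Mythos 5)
Your easy direction is complete and coincides with the paper's own argument. The hard direction, however, contains a genuine gap: essentially all of the content of that direction lies in the selection of the centres $x_m$ and scales $\rho_m$, and your proposal explicitly defers it (``the hard part will be the selection step''). None of the three properties you need --- $\rho_m\to 0^+$, a lower oscillation bound that survives the passage to the limit, and a H\"older estimate for the rescaled maps that is uniform across all scales --- is actually established, and the difficulties you yourself flag (weak semicontinuity of the image-diameter functional $\diam_\sigma f(\overline{B(x,r)})/r^{\alpha}$, its degeneracy as $r\to 0$, control of the boundary weight while extracting maximisers) are precisely the substance of the theorem. As written, this is a strategy rather than a proof.

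For comparison, the paper avoids your functional entirely and runs the extremal problem on \emph{two-point} H\"older quotients of the translates, which makes every step an explicit computation. Since $\mathcal{F}=\{f(\cdot+a):a\in\R^n\}$ fails to be normal on some fixed ball $B(0,r^*)$, Theorem~\ref{thm:qr} yields translates $f_m\in\mathcal{F}$ and points $x_m^*,y_m^*\in B(0,r^*)$ whose quotient $\sigma(f_m(x_m^*),f_m(y_m^*))/|x_m^*-y_m^*|^{\alpha}$ blows up. One then fixes $r>r^*$, sets $M_m=\sup_{|x|,|y|\le r}\bigl(1-|x|^2/r^2\bigr)\sigma(f_m(x),f_m(y))/|x-y|^{\alpha}\to\infty$, picks near-maximisers $x_m,y_m$, and \emph{defines} the scale by the normalisation $\rho_m^{\alpha}=|x_m-y_m|^{\alpha}/\sigma(f_m(x_m),f_m(y_m))$. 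Then: (a) $\rho_m^{\alpha}\le (2/M_m)(1-|x_m|^2/r^2)\to 0$; (b) the rescaled maps are defined on balls of radii $R_m=(r-|x_m|)/\rho_m\to\infty$, and it is exactly here that the hypothesis $K>1$ (so $\alpha<1$) is used --- a point your proposal never addresses; (c) comparing the quotient at the pair $(x_m+\rho_m w_1,\,x_m+\rho_m w_2)$ against $M_m$ gives the scale-uniform H\"older bound, with constant tending to $2$, whence normality of $(g_m)$ by Theorem~\ref{thm:qr} and the Yosida property of any limit by Theorem~\ref{thm:globalholder}. Finally, non-constancy of the limit is \emph{not} automatic even with this normalisation: one only knows $\sigma(g_m(0),g_m(\xi_m))=|\xi_m|^{\alpha}$ for $\xi_m=(y_m-x_m)/\rho_m$, and $\xi_m$ may tend to $0$; the paper needs a separate argument (composing with spherical isometries and applying Theorem~\ref{thm:ric} to get the lower bound $\diam(h_m(B(0,a)))\ge a^{\alpha}/(2\lambda(n))$) to exclude constant limits in that case. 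Your assertion that the rescaled maps ``should have oscillation bounded below on the unit ball'' glosses over exactly this difficulty; with a different normalisation it might be arranged, but that would have to be proved as part of the selection you have not carried out.
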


\begin{proof}
First, suppose that there are sequences $x_m,\rho_m$ as in the hypotheses and a non-constant Yosida mapping $g$ so that $f(x_m + \rho_mw) \to g(w)$ locally uniformly on $\R^n$. Further, assume that $f$ is a Yosida mapping. Then by \eqref{eq:yos}, for any $w\in \R^n$, we have
\[ \operatorname{diam}_{\sigma}( g( \overline{ B(w,1) } ) ) = \lim_{m\to \infty} \operatorname{diam}_{\sigma} ( f ( \overline{ B(x_m + \rho_mw, \rho_m ) } ) ) \leq \lim_{m\to \infty} C (2 \rho_m)^{\alpha}.\]
It follows that $\operatorname{diam}_{\sigma} (  g( \overline{ B(w,1) } ) )  = 0$. This implies that $g$ is constant, yielding a contradiction.

For the converse, suppose now that $f:\R^n \to S^n$ is not a Yosida mapping and so ${\mathcal{F} = \{f(x+a) : a\in \R^n \}}$ is not a normal family. Then $\mathcal{F}$ is not normal on $B(0,r^*)$ for some $r^* >0$. By Theorem \ref{thm:qr}, there exists $f_m \in \mathcal{F}$ and $x_m^*,y_m^* \in B(0,r^*)$ with
\[ \lim_{m\to \infty} \frac{ \sigma (f_m(x_m^*) , f_m(y_m^*) ) }{|x_m^* - y_m^* |^{\alpha} } = \infty.\]
Fix $r>r^*$ and set
\[ M_m = \sup_{|x|,|y| \leq r } \left ( 1 - \frac{|x|^2}{r^2} \right) \frac{ \sigma( f_m(x) , f_m(y) )}{|x-y|^{\alpha} }.\]
Clearly $M_m \to \infty$. Now choose $x_m,y_m \in B(0,r)$ with
\[ \left ( 1 - \frac{|x_m|^2}{r^2} \right ) \frac{ \sigma (f_m(x_m) , f_m(y_m) ) }{|x_m - y_m|^{\alpha} } \geq \frac{M_m}{2}\]
and set 
%\[ g_m(w) = f_m( x_m + \rho_mw),\]
%where 
\begin{equation}
\label{eq:yosida1} 
g_m(w) = f_m( x_m + \rho_mw), \quad \mbox{ where } \quad \rho_m^{\alpha} = \frac{ |x_m - y_m| ^{\alpha} }{\sigma ( f_m(x_m) , f_m(y_m) ) }.
\end{equation}
Since 
\[ \rho_m^{\alpha} \leq \left ( \frac{2}{M_m} \right ) \left ( 1 - \frac{|x_m|^2}{r^2} \right ),\]
we have $\rho_m \to 0$ as $m \to \infty$. Now, $|x_m + \rho_m w| \leq r$ if $|w|\leq R_m$, where $R_m = (r-|x_m|)/\rho_m$. As
\[ \frac{1}{R_m} = \frac{\rho_m}{ r - |x_m| } \leq \frac{ 2(r+|x_m| )\rho_m^{1-\alpha} }{r^2 M_m} \leq \frac{4\rho_m^{1-\alpha} }{rM_m},\]
and $\alpha <1$, it follows that $R_m \to \infty$ as $m\to \infty$. We next show that $(g_m)$ forms a normal family on $\R^n$. 

Let $R>0$ and let $w_1,w_2 \in B(0,R)$. Choose $m$ large enough that $R<R_m$ and so $|x_m + \rho_m w_i | <r$ for $i=1,2$. 
Then by the definitions of $g_m$ and $M_m$, we have
\begin{align} \nonumber
\frac{ \sigma (g_m(w_1) , g_m(w_2) ) }{|w_1 - w_2|^{\alpha}} &= \frac{ \sigma ( f_m( x_m + \rho_m w_1 ) , f_m( x_m + \rho_m w_2) )}{|w_1 - w_2 |^{\alpha} } \\
&=  \nonumber\frac{ \rho_m^{\alpha} \sigma ( f_m ( x_m + \rho_m w_1) , f_m ( x_m + \rho_m w_2 ) ) }{| ( x_m + \rho_m w_1) - (x_m + \rho_mw_2) |^{\alpha} }\\
& \nonumber\leq \frac{ \rho_m^{\alpha} M_m}{  1 - \frac{ |x_m + \rho_m w_1 | ^2}{r^2}  } \\
& \nonumber \leq \frac{ \left ( \frac{2}{M_m} \right ) \left ( 1 - \frac{ |x_m|^2}{r^2} \right ) M_m}{1 - \frac{ |x_m + \rho_mw_1|^2}{r^2} } \\
&= \nonumber \frac{2 ( r^2 - |x_m|^2) }{r^2 - |x_m + \rho_mw_1|^2} \\
& \label{eq:yosida3} \leq 2 \left ( \frac{r+|x_m|}{ r+|x_m| +R\rho_m } \right ) \left ( \frac{ r-|x_m|}{r-|x_m| - R\rho_m } \right ).
\end{align}
The first ratio here is bounded above by $1$ and the second converges to $1$ as $m\to \infty$ for $R$ fixed.
Consequently, by Theorem \ref{thm:qr}, $(g_m)$ forms a normal family. 

We need to show that a limit $g$ of a convergent subsequence of $(g_m)$ is non-constant. To that end, find $\xi_m$ with $x_m+\rho_m\xi_m = y_m$. Then by \eqref{eq:yosida1}
\[ |\xi_m| = \frac{|y_m - x_m|}{\rho_m} = \frac{ \left|y_m - x_m\right| \sigma( f_m(x_m) , f_m(y_m) )^{1/\alpha} }{|x_m-y_m|} \leq \pi^{1/\alpha}.\]
Therefore $(\xi_m)$ contains a convergent subsequence which we pass to and relabel. Note that
\begin{equation}
\label{eq:yosida2}
 \sigma(g_m(0) , g_m(\xi_m) )  = \sigma(f_m(x_m) ,  f_m(y_m) ) = \frac{|x_m-y_m|^{\alpha}}{\rho_m^{\alpha} } = |\xi_m|^{\alpha}.
\end{equation}
We may assume that $\xi_m\to0$, since otherwise any limit function $g$ will take distinct values at $0$ and $\lim \xi_m$. Now let $h_m = U_m \circ g_m$, where $U_m$ is a spherical isometry sending $g_m(0)$ to~$0$. By the equicontinuity of $(g_m)$ we can find $a>0$ such that the $h_m$ are uniformly  bounded $K$-quasiregular mappings on $B(0,a)$ for all large $m$, and hence form a normal family.
 We have
\begin{equation}
\label{eq:yosida4}
 \sigma(g_m(0) , g_m(\xi_m) )  = \sigma(h_m(0) , h_m(\xi_m) ) \le 2 |h_m(\xi_m)|,
\end{equation}
recalling the fact that $\sigma(x,y)\le 2|x-y|$.
For large $m$, we now apply Theorem \ref{thm:ric} with ${U = B(0,a)}$, ${f= h_m}$, ${E = \{ 0 \}}$, ${y = \xi_m \in B(0,a)}$, ${\delta = d(0,\partial U) = a}$ and ${r= \operatorname{diam}(h_m(B(0,a)))}$ to get
\[ |h_m(\xi_m)| \leq \frac{ \lambda(n) |\xi_m|^{\alpha} \operatorname{diam}(h_m(B(0,a))) }{a^{\alpha} } .\]
Recall that $\lambda(n)$ is a constant depending only on $n$. Combining this with \eqref{eq:yosida2}, \eqref{eq:yosida4} we conclude that
\[ \operatorname{diam}(h_m(B(0,a))) \geq \frac{a^{\alpha}}{2\lambda(n)}.\]
Consequently, once we pass to a convergent subsequence of $(h_m)$, we see that any limit must be non-constant. Since $h_m = U_m \circ g_m$, the same is therefore true for any limit of a subsequence of $(g_m)$.

Finally, we have to show that any such limit $g$ is a Yosida mapping. For any $R>0$ and any $w_1,w_2 \in B(0,R)$, \eqref{eq:yosida3} implies that
\[ \sigma ( g(w_1) , g(w_2) ) \leq 2|w_1-w_2|^{\alpha}.\]
Consequently this is also true for any $w_1,w_2 \in \R^n$. This shows that $g:\R^n \to S^n$ is globally H\"older continuous and hence by Theorem \ref{thm:globalholder}, $g$ is normal. In other words, $g$ is a Yosida mapping, as required.
\end{proof}


\begin{thebibliography}{99}


\bibitem{BM}
A. F. Beardon, D. Minda,
Normal families: a Geometric Perspective,
{\it Comput. Methods Funct. Theory}, {\bf 14} (2014), 331-355.

\bibitem{BH}
M. Bonk and J. Heinonen,
Quasiregular mappings and cohomology,
{\it Acta Math.}, {\bf 186} (2001), 219-238.

\bibitem{CH}
J. Clunie and W.K. Hayman,
The spherical derivative of integral and meromorphic functions,
{\it Comment. Math. Helv.}, {\bf 40}, (1966), 117-148.

\bibitem{Eremenko}
A. Eremenko,
Bloch radius, normal families and quasiregular mappings,
{\it Proc. Amer. Math. Soc.}, {\bf 128}, no.2 (2000), 557-560.

\bibitem{Fletcher}
A. Fletcher,
Quasiregular semigroups with examples, 
{\it Disc. Cont. Dyn. Syst.},{\bf  39}(4) (2019), 2157-2172.

%\bibitem{FM}
%A. Fletcher, D. Macclure,
%On quasiregular linearizers, 
%{\it Comp. Meth. Func. Th.}, {\bf 15}, no.2 (2015), 263-276.

%\bibitem{FM2}
%A. Fletcher, D. Macclure,
%Strongly automorphic mappings and Julia sets of uniformly quasiregular mappings, {\it J. Anal. Math.},
%{\bf 141} (2020), 483-520.

\bibitem{FN}
A. Fletcher, D. A. Nicks,
Iteration of quasiregular tangent functions in three dimensions, 
{\it Conform. Geom. Dyn.}, {\bf 16} (2012), 1-21.

%\bibitem{GMP}
%F. W. Gehring, G. J. Martin, B. P. Palka,
%{\it An Introduction to the Theory of Higher-Dimensional Quasiconformal Mappings},
%Mathematical Surveys and Monographs, {\bf 216}, AMS, 2017.

%\bibitem{GP}
%F. W. Gehring, B. P. Palka, 
%Quasiconformally homogeneous domains,
%{\it J. Analyse Math.}, {\bf 30} (1976), 172-199. 

\bibitem{HKV}
P. Hariri, R. Kl\'en and M. Vuorinen,
{\it Conformally Invariant Metrics and Quasiconformal Mappings}, Springer, 2020.

\bibitem{Hasto}
P. H\"ast\"o,
Isometries of the quasihyperbolic metric,
{\it Pacific J. Math.} {\bf 230} (2007), no. 2, 315-326. 

\bibitem{HM}
A. Hinkkanen, G. J. Martin, 
Quasiregular families bounded in $L^p$ and elliptic estimates, 
{\it J. Geom. Anal.}, {\bf 30} (2020), no. 2, 1627-1636

\bibitem{IM}
T. Iwaniec, G. Martin
{\it Geometric Function Theory and Non-linear Analysis}, 
Oxford University Press, 2001.

%\bibitem{K}
%J. L. Kelley,
%{\it General Topology},
%Van Nostrand (1955).

\bibitem{LV}
O. Lehto, K. I. Virtanen,
Boundary behaviour and normal meromorphic functions,
{\it Acta Math.}, {\bf 97} (1957), 47-65.

\bibitem{LiVi}
B. Q. Li, E. Villamor,
Boundary limits for bounded quasiregular mappings,
{\it J. Geom. Anal.}, {\bf 19} (2009), no.3, 708-718.

\bibitem{MV}
S. Makhmutov, M. Vuorinen,
On normal families of quasiregular mappings,
{\it Comput. Methods Funct. Theory}, {\bf 7} (2007), no. 2, 401-413. 


%\bibitem{Martin}
%G. J. Martin,
%Quasiconformal and affine groups,
%{\it J. Differential Geom.}, {\bf 29} (1989), 427-448.

\bibitem{MS}
O. Martio, U. Srebro,
Periodic quasimeromorphic mappings in $\R^n$,
{\it J. Anal. Math.}, {\bf 28}, Issue 1 (1975), 20-40.

\bibitem{Minda}
D. Minda,
Yosida functions, {\it Lectures on complex analysis (Xian, 1987)}, 197-213, World Sci. Publishing, Singapore, 1988. 

\bibitem{Min}
R. Miniowitz, 
Normal families of quasimeromorphic mappings,
{\it Proc. Amer. Math. Soc.}, {\bf 84} (1982), no. 1, 35-43.

\bibitem{Munkres}
J. R. Munkres, {\it Topology}, Second edition, Prentice Hall, Upper Saddle River, NJ, 2000.

\bibitem{Po}
C. Pommerenke,
On Bloch functions,
{\it J. London Math. Soc.}, {\bf 2}, no.4 (1970), 689-695.

\bibitem{Pommerenke}
C. Pommerenke, 
{\it Univalent functions}, Vandenhoeck and Ruprecht, G\"ottingen, 1975.

\bibitem{Pom2}
C. Pommerenke,
{\it Boundary behaviour of conformal mappings},
Springer, 1994.

\bibitem{Rajala}
K. Rajala,
Bloch's theorem for mappings of bounded and finite distortion,
{\it Math. Ann.}, {\bf 339}, no.2 (2007), 445-460. 

\bibitem{Ratcliffe}
J. G. Ratcliffe,
{\it Foundations of hyperbolic manifolds},
Graduate Texts in Mathematics, {\bf 149}. Springer-Verlag, New York, 1994.

\bibitem{Rickman}
S. Rickman, {\it Quasiregular Mappings}, Ergebnisse der Mathematik und ihrer
Grenzgebiete (3) {\bf 26}, Springer, Berlin, 1993.

\bibitem{Schiff} J. Schiff, {\it Normal families}, Springer-Verlag, New York, 1993.

\bibitem{Vuorinen}
M. Vuorinen, {\it Conformal geometry and quasiregular mappings}, Springer-Verlag, Berlin, 1988. 

\bibitem{Yosida}
K. Yosida, On a class of meromorphic functions, {\it Proc. Phys. Math. Soc. Japan}, {\bf 16} (1934), 227-235.

%\bibitem{Zalcman} L. Zalcman, Normal families new perspectives,
%{\it Bull. Amer. Math. Soc.}, {\bf 35} (1998), no. 3, 215-230.




\end{thebibliography}
\end{document}